\long\def\symbolfootnote[#1]#2{\begingroup%
\def\thefootnote{\fnsymbol{footnote}}\footnote[#1]{#2}\endgroup}
\definecolor{Red}{rgb}{0.7,0,0.1}
\definecolor{Green}{rgb}{0,0.7,0}
\newcommand{\fix}[1]{\footnote{{\textcolor{Red}{{\bf  Cuidado:}}}}}
\newcommand{\indeed}[1]{}
\title{Invariant Measures for Dissipative Dynamical Systems:\\
Abstract Results and Applications\footnote{{\bf To appear in Communications in Mathematical Physics}}}
\author{Micka\"el D. Chekroun$^{\sharp}$ and Nathan E. Glatt-Holtz$^\flat$}
\date{}
\numberwithin{equation}{section}
\newtheorem{Thm}{Theorem}[section]
\newtheorem{Lem}{Lemma}[section]
\newtheorem{Prop}{Proposition}[section]
\newtheorem{Def}{Definition}[section]
\newtheorem{Rmk}{Remark}[section]
\newcommand{\pd}[1]{\partial_{#1}}
\newcommand{\dr}{{\rm d}}
\newcommand{\DO}{\mathcal{D}}
\newcommand{\R}{\mathbb{R}}
\newcommand{\LIM}{\underset{T \rightarrow \infty}{\rm{LIM}}}
\newcommand{\memK}{\mu}
\newcommand{\brlMsr}{\mathfrak{m}}
\newcommand{\Gadol}{\Lambda}
\providecommand{\MR}{\relax\ifhmode\unskip\space\fi MR }
\def\bt{\begin{thm}}
\def\et{\end{thm}}
\def\bl{\begin{lem}}
\def\el{\end{lem}}
\def\bd{\begin{defi}}
\def\ed{\end{defi}}
\def\bc{\begin{cor}}
\def\ec{\end{cor}}
\def\bp{\begin{proof}}
\def\ep{\end{proof}}
\def\br{\begin{rem}}
\def\er{\end{rem}}
\def\be{\begin{equation}}
\def\ee{\end{equation}}
\def\bes{\begin{equation*}}
\def\ees{\end{equation*}}
\def\bea{\begin{equation}  \begin{aligned}}
\def\eea{\end{aligned} \end{equation}}
\def\beas{\begin{equation*} \begin{aligned}}
\def\eeas{\end{aligned} \end{equation*}}
\begin{document}


\maketitle

\vskip-4mm

\centerline{\footnotesize{\it $^{\sharp}$ Department of Atmospheric Sciences and Institute of Geophysics and Planetary Physics}}
\vskip-1mm
\centerline{\footnotesize{\it University of California, Los Angeles, CA 90095-1565, USA}}
\vskip-1mm
\centerline{\footnotesize{\it and}}
\vskip-1mm
\centerline{\footnotesize{\it Environmental Research and Teaching Institute (CERES-ERTI)}}
\vskip-1mm
\centerline{\footnotesize{\it \'Ecole Normale Sup\'erieure, 75231 Paris Cedex 05, France}}
\vskip-1mm
\centerline{\footnotesize{\it email:} \url{mchekroun@atmos.ucla.edu}}

\vskip2mm

\centerline{\footnotesize{\it $^\flat$ Department of Mathematics and The Institute for Scientific Computing and Applied Mathematics} }
\vskip-1mm
\centerline{\footnotesize{\it Indiana University, Bloomington, IN 47405, USA}}
\vskip-1mm
\centerline{\footnotesize{\it email:} \url{negh@indiana.edu}}

\begin{center}
\large
\date{\today}
\end{center}

\vskip4mm

\begin{abstract}
In this work we study certain invariant measures that can
be associated to the time averaged observation of a broad class of dissipative
semigroups via the notion of a generalized Banach limit.
Consider an arbitrary complete separable metric space $X$
which is acted on by any  continuous semigroup $\{S(t)\}_{t \geq 0}$.
Suppose that $\{S(t)\}_{t \geq 0}$ possesses a global attractor $\mathcal{A}$.
We show that, for any generalized Banach limit $\LIM$ and any probability
distribution of initial conditions $\brlMsr_0$,  that there exists an invariant
probability measure
$\brlMsr$, whose support is contained in $\mathcal{A}$, such that
    \begin{displaymath}
       \int_{X} \varphi(x) \dr\brlMsr(x)
       = \LIM \frac{1}{T}\int_0^T \int_X \varphi(S(t) x) \dr\brlMsr_0(x) \dr t,
    \end{displaymath}
for all observables $\varphi$ living in a suitable function space of continuous mappings
on $X$.

    This work
     is based on the framework of \cite{FoiasManleyRosaTemam1}; it
     generalizes and simplifies the proofs of more recent works
\cite{Wang2009},\cite{LukaszewiczRealRobinson2011}.  In particular our results
rely on the novel use of a general but elementary topological observation, valid in any
metric space, which concerns the growth of continuous functions in the
neighborhood of compact sets.
In the case when $\{S(t)\}_{t \geq 0}$  does not possess a compact absorbing set, this lemma allows
us to sidestep the use of weak compactness arguments
which require the imposition of cumbersome weak continuity conditions and
thus restricts the phase space $X$ to the case of a reflexive Banach space.

    Two examples of concrete dynamical systems where the semigroup
    is known to be non-compact are examined in detail.  We first consider
    the Navier-Stokes equations
    with memory in the diffusion terms.  This is the so called Jeffery's model
    which describes certain classes of viscoelastic fluids.
    We then consider a family of neutral delay differential
    equations, that is equations with delays in the time derivative terms.  These
    systems may arise in the study of wave propagation problems coming from
    certain first order hyperbolic partial differential equations; for example for
    the study of line transmission problems.
  For the second example the phase space is $X = C([-\tau,0],\mathbb{R}^n)$, for some
  delay $\tau > 0$, so that $X$ is not reflexive in this case.
\end{abstract}

{\noindent
  \footnotesize{\it {\bf Keywords:}  Dissipative dynamical systems, invariant measures, ergodicity,
  generalized Banach limits, global attractors, non-compact semigroups,
  systems with memory, Navier-Stokes, neutral delay differential equations.} \\
  \footnotesize{\it {\bf MSC2010:} 34K40, 35B41, 35B40, 35Q35, 37L40, 37L50, 37L05, 37N10, 47H20, 60B05, 76A10.}}

\newpage

\section{Introduction}

Essential characteristics of dynamical systems are described by invariant probability measures.
These measures identify statistical equilibria and can provide important information about
the long time behavior of the dynamics.  It is therefore of paramount interest both in theory
and applications to better understand this class of probability measures.

Frequently however, statistically steady states represented by invariant measures
are difficult to determine.
In practice physicists
and engineers compute approximations of invariant probability measures
by taking averages of time series (observables) associated to the system and
invoking an `ergodicity assumption'; they posit an equivalence between
these temporal averages and averages against the unknown
invariant measure which they are trying to identify.
Mathematically speaking, the complete and rigorous justification
of such an `ergodic hypothesis' seems
unreachable for non-specific classes of dynamical systems and remains a challenging
problem even for specific examples.   With this backdrop in mind, our aim in this and future works
(\cite{ChekrounGlattHoltz2011a,ChekrounGlattHoltz2011b})
is to further the development of a particular mathematical framework coming from
\cite{FoiasManleyRosaTemam1}, which
establishes a weak link between ensemble averages and temporal averages based
on generalized limits but which we show is in fact applicable to a 
wide class of dissipative
dynamical systems.\footnote{ In this article, dissipative dynamical systems are defined as those 
which possess a bounded absorbing set, cf. \cite{Temam3}.  
Note that other authors, e.g. \cite{Hale1988}, have referred to dynamical systems 
with this property as having {\it bounded dissipation}.}

For the study of infinite dimensional evolution equations a number
of different mathematical approaches to ergodicity have been
developed, each relevant to different situations.  One approach has
been to focus on some classes of simple linear or semi-linear first order
partial differential equations.  See, for example, \cite{Lasota1979, Dawidowicz1983,
BrunovskyKomornik1984, Rudnicki1988, Rudnicki2004} and references
therein.  While these works are significant for the fact that
they provide examples of linear systems where solutions exhibit
chaotic or `turbulent' behavior, the methods developed are limited
to a very specific class of equations. Another,
different, approach involves the study of stochastic partial
differential equations (and other related infinite dimensional
stochastic systems) where ergodicity is defined in terms of the
Markov semigroup generated by the stochastic semiflow. See e.g.
\cite{ZabczykDaPrato2, FlandoliMaslowski1, KuksinShirikyan1,
DaPratoDebussche, HairerMattingly1, HairerMattingly2011,
Debussche2011a}.  The methods developed in these works apply to a
wide variety of stochastically perturbed dissipative linear and
nonlinear systems, for instance the Navier-Stokes equations,
Reaction-Diffusion equations, Delay Equations, weakly damped nonlinear
Schr\"{o}dinger equations, complex Ginzburg-Landau equations.
On the other hand, this approach relies in an essential way on an
underlying mechanism of stochasticity, one in which the noise
injected into the system has to take a very specific form. Furthermore, 
this approach identifies invariant measures of the Markov semigroup
which are
deterministic probability measures and thus are not  carried by the
 global random attractor.\footnote{On other hand these invariant measures
are linked to the global random attractor; see \cite{CSG11} for a discussion 
of such relationships in the context of stochastic differential equations
(in finite dimensions).}  

In this work we follow a different approach, pioneered in \cite{FoiasTemam1975} and developed
in \cite{BCFM95,FoiasManleyRosaTemam1}.
These works link ensemble and temporal averages via the notion of the so called `generalized Banach limit',
a linear functional acting over the space of bounded continuous functions which in particular associates those elements
converging at $+ \infty$ with their classical limit; cf. Definition~\ref{def:GBL} below.
While the ideas in these works were developed in the specific setting of the $2D$ Navier-Stokes equations
on a bounded domain some essential aspects of the framework generalize in a straightforward way to
any compact semigroup $\{ S(t)\}_{t \geq 0}$ acting on a complete, separable metric space $(X,d)$.
On the other hand, in the important
case of non-compact semigroups (see below for an extended discussion of examples of such systems)
certain difficulties appear in generalizing this approach since the construction, as described in
\cite{FoiasTemam1975, BCFM95, FoiasManleyRosaTemam1}, seems to rely in an essential way on the existence
of a compact \emph{absorbing} set.

More recent works, \cite{Wang2009, LukaszewiczRealRobinson2011}, have been able to remove this
compactness assumption by restricting $X$ to be a reflexive Banach space and by requiring an
additional `weak-to-weak' continuity assumption on $\{ S(t)\}_{t \geq 0}$.  In essence, these
works make use of the fact that bounded sets are \emph{weakly compact}
so that the arguments employed
rely on the structure of reflexive Banach spaces and their associated weak topologies.
While these requirements may be met in certain examples, the theory necessitates
the verification of this additional weak-to-weak assumption which can be
involved in practice (see e.g. \cite{Rosa98}).  Moreover, there are other interesting classes of
dissipative dynamical systems which do not fall into this category. 
We develop some examples in detail below in this connection.

We show here that these cumbersome assumptions,
imposed in \cite{Wang2009, LukaszewiczRealRobinson2011},
are in fact totally unnecessary.  We demonstrate that the corresponding results
in \cite{FoiasManleyRosaTemam1} extend to any continuous semigroup $\{ S(t)\}_{t \geq 0}$, evolving
on any complete, separable metric space, which possesses a global attractor.   Moreover, our method of proof is
more elementary in character;  we extend the methods
of \cite{FoiasManleyRosaTemam1} via a simple topological observation which limits the growth
of continuous functions in a neighborhood of any compact subset of a metric space.  
See Lemma~\ref{thm:BasicMetricSpaceBS}
below.

With this background in mind we may describe the main abstract results in
this work as follows:  Suppose that $(X,d)$ is any complete,
separable metric space which is acted on by a continuous semigroup
$\{S(t)\}_{t \geq 0}$ possessing a global attractor $\mathcal{A}$.
We show that for any probability distribution $\brlMsr_0$  of
initial conditions over $X$, that there is an invariant
probability measure $\brlMsr$ for $\{S(t)\}_{t \geq 0}$, which has its support
contained in the global attractor $\mathcal{A}$, such that
    \begin{equation}\label{eq:WeakErgodicAveGeneral}
       \int_{X} \varphi(x) \dr\brlMsr(x)
       = \LIM \frac{1}{T}\int_0^T \int_X \varphi(S(t) x) \dr\brlMsr_0(x) \dr t,
    \end{equation}
    for all real valued continuous bounded observables $\varphi$.
  Note that, as in \cite{LukaszewiczRealRobinson2011}, we show here that \eqref{eq:WeakErgodicAveGeneral}  holds
for an arbitrary probability distribution of initial conditions and not just for  individual trajectories emanating from some $x_0 \in X$; but again we
do so without any restrictions on  $X$ beyond that it be a complete and separable metric space.
On the other hand, in the case
    when $\brlMsr_0$ is a Dirac measure supported on some point $x_0 \in X$ (i.e.  when $\brlMsr_0 = \delta_{x_0}$) we show
    that \eqref{eq:WeakErgodicAveGeneral} holds for  \emph{any} continuous real valued observable $\varphi$.
In any case, as with the previous results in this direction,  \eqref{eq:WeakErgodicAveGeneral} may
be seen as establishing a kind of weak notion of ergodicity; by replacing the operation $\lim_{T\rightarrow \infty}$ with the operation
 $\LIM$ we show that the equality of time averages and ensemble averages can be obtained.
Note finally that our results generalize naturally to a non-autonomous or a stochastic
setting but this comes at the cost of significant additional technicalities.
We refer the interested reader to \cite{ChekrounGlattHoltz2011a,ChekrounGlattHoltz2011b}
where these generalizations will be carried out in the non-autonomous and stochastic
cases respectively.

The initial motivations that led us to discover the results
appearing herein arose in ongoing work on nonlinear
partial differential equations of parabolic type with memory effects
added in the diffusion terms (see
\cite{ChekrounDiPlinioGlattHoltzPata2010,
ChekrounGlattHoltz2011a,ChekrounGlattHoltz2011b}).  Such systems
have a `hyperbolic character' in comparison to their more classical cousins;
they provide an interesting class of examples which
do not possess in general a compact absorbing set for the associated semigroup. Of course there are many other important evolution equations
which exhibit non-compact semigroups; for example, the  Navier-Stokes equations on unbounded (i.e. domains where the Poincar\'e
inequality holds) \cite{Rosa98}, retarded equations with
infinite delay \cite{HaleLunel93}, neutral functional differential
equations \cite{HaleLunel93},  certain partial functional differential
equations \cite{Wu96}, the linearly damped nonlinear wave equation
as well as other equations of (partially) hyperbolic type
\cite{Ladyzhenskaya91, Temam3}.  Note that restricting consideration
to semigroups evolving on reflexive Banach spaces
is too  stringent for many of the above cited equations.
For instance in the case of neutral
functional differential equations, cf. \cite{HaleLunel93},
the phase-space is typically
$C([-\tau,0],\mathbb{R}^n)$ for
some delay time $\tau > 0$.

Here, in order to illustrate the flexibility of our main abstract
results, we will study two dynamical systems in detail. In each case
we consider systems
with memory which generate non-compact semigroups. We first consider
a model for a viscoelastic fluids similar to the 2D Navier-Stokes
equations but incorporating non-local, integro-differential
diffusive terms that depend on the past history of the flow.  See
e.g. \cite{Joseph90, AgranovichSobolevskii98, Orlov99,
GattiGiorgiPata} and below for further mathematical and physical
background. We then turn to a class of neutral delay differential
equations (NDDEs).  These are functional differential
equations with dependence on the past values of the solution and its
time derivatives.   Such systems arose in relation to the study of
certain line transmission problems modeled by the telegrapher's
equation  \cite{Brayton_Miranker64}, and have been since
encountered in various engineering and physical applications
involving other hyperbolic PDEs;  see e.g. \cite{KH10} and Remark~\ref{rmk:GFDapp} 
below for connections to systems in geophysical fluid dynamics.
For further
background on the mathematical theory of NDDEs see e.g.
\cite{HaleLunel93}.

The manuscript is organized as follows:  We begin by briefly reviewing
some mathematical generalities and setting notations that will be employed
throughout the rest of the work.  The main results are then given
in precise terms in Theorems~\ref{thm:Prop1} and \ref{thm:Prop2}
below.  Complete, self-contained, proofs of both results are next given in
Section~\ref{sec:ProofOfAbsResults}.  Finally
in Section~\ref{sec:Examples}, we turn to the study of the two
concrete examples, the Navier-Stokes equations with memory and certain classes of NDDEs,
establishing novel results linking invariant measures to temporal averages for these systems in
Theorem~\ref{thm:JeffGlobalAttractor} and Theorems~\ref{Thm_existence_attractor},~\ref{THM_IM_NDDE}
respectively.

\section{Notations, mathematical preliminaries and statement of the main results}
\label{sec:MainResults}

Before stating the main abstract results of the work in precise terms (Theorems~\ref{thm:Prop1}, \ref{thm:Prop2})
we first review some basic definitions and other essential mathematical preliminaries setting notations
that will be used below.
Throughout the rest of the article we will always take
$(X,d)$ to be an arbitrary complete, separable metric space and consider
a continuous semigroup $\{S(t)\}_{t \geq 0}$ on $X$;
more precisely we assume that $S(0)=\mbox{Id}_X$,
$S(t+s)=S(t)S(s)$ for all $t,s\in\mathbb{R}^{+},$
and that $S:\mathbb{R}^{+}\times X \rightarrow X$ is separately continuous.

Recall that a global attractor $\mathcal{A}$ is a compact subset of $X$ that is invariant under $S(t)$,
i.e. such that $S(t) \mathcal{A} = \mathcal{A}$ for all $t \geq 0$ and which attracts all bounded subsets of $X$
viz.
\begin{displaymath}
    \lim_{t \rightarrow \infty} d_H(S(t)B, \mathcal{A})
    = 0, \quad
    \textrm{ for all } B \subset X, B \textrm{ bounded},
\end{displaymath}
where $d_H$ is the Hausdorff semi-distance
\begin{displaymath}
    d_H(E, F) := \sup_{x \in E} \inf_{y \in F} d(x, y)
    \textrm{ for any } E, F \subset X.
\end{displaymath}
The study of attractors is an extensive and well-developed subject see
e.g. \cite{Temam3, Hale1988, Ladyzhenskaya91, Chueshov99, Robinson1, ChepyzhovVishik2002, MiranvilleZelik2008}.
For the abstract results, Theorems~\ref{thm:Prop1}, \ref{thm:Prop2}, we will
assume that $\{S(t)\}_{t \geq 0}$ possesses a global attractor $\mathcal{A}$.
On the other hand, for the concrete
examples considered in Section~\ref{sec:Examples}, we employ the
following useful and rather general sufficient condition for the existence
of a global attractor.
Note also that this result follows immediately
as a special case of the
characterizations of semigroups possessing
a global attractor appearing in e.g. \cite[Theorem 3.8]{Ma_Wang_Zhong}
or \cite[Theorem 11]{ChepyzhovContiPata2011}.
\begin{Prop}\label{thm:AttractorCriteria}
Let $H$ be a Banach Space (with an associated norm $\| \cdot \|$) and consider
$\{S(t)\}_{t \geq 0}$ a continuous semigroup acting on $H$.  Suppose that
\begin{itemize}
\item[(i)] there exists a bounded set $\mathfrak{B} \subset H$ such that
for every  $B \subset H$, $B$ bounded, there exists $t^* = t^*(B) > 0$
such that $S(t)B \subset \mathfrak{B}$ for every $t \geq t^*$.
\item[(ii)] For $t \geq 0$ we may split $S(t)$ as $S(t) = S_{1}(t) + S_{2}(t)$ such that, for every $K>0$,
$$\sup_{x \in H: \|x\| \leq K} \|S_1(t)x\| \xrightarrow{t \rightarrow \infty}  0$$
and for every bounded set $B$ and every $t > 0$, $S_{2}(t) B$ is a precompact subset of $H$.
\end{itemize}
Then $\{S(t)\}_{t \geq 0}$ has a  (connected) global attractor $\mathcal{A}$ which is the
omega limit set of $\mathfrak{B}$ i.e.
$$
    \mathcal{A} = \omega(\mathfrak{B}) := \bigcap_{t \geq 0} \overline{\bigcup_{s \geq t} S(s) \mathfrak{B}}.
$$
\end{Prop}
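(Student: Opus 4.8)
The plan is to deduce this from the abstract characterizations of semigroups possessing a global attractor quoted just above, e.g.\ \cite[Theorem 3.8]{Ma_Wang_Zhong} or \cite[Theorem 11]{ChepyzhovContiPata2011}: since hypothesis (i) furnishes a bounded absorbing set, it suffices to verify that $\{S(t)\}_{t\geq 0}$ is \emph{asymptotically compact}, i.e.\ that for every bounded sequence $\{x_n\}\subset H$ and every sequence of times $t_n\to\infty$ the sequence $\{S(t_n)x_n\}$ is precompact in $H$. Granting this, the quoted theorem yields directly a compact invariant global attractor $\mathcal{A}=\omega(\mathfrak{B})$, and the connectedness of $\mathcal{A}$ is then obtained by a standard separate topological argument.

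To verify asymptotic compactness, set $B:=\{x_n:n\geq 1\}$, let $K:=\sup_{y\in\mathfrak{B}}\|y\|<\infty$ and $\rho(T):=\sup_{\|y\|\leq K}\|S_1(T)y\|$, so that $\rho(T)\to 0$ as $T\to\infty$ by (ii). Fix $T>0$. By (i) there is $t^{*}=t^{*}(B)$ with $S(t)B\subset\mathfrak{B}$ for $t\geq t^{*}$; hence for every $n$ large enough that $t_n\geq T+t^{*}$ we may write, using the semigroup property and the splitting in (ii),
\[
S(t_n)x_n=S(T)\bigl(S(t_n-T)x_n\bigr)=S_1(T)\bigl(S(t_n-T)x_n\bigr)+S_2(T)\bigl(S(t_n-T)x_n\bigr),
\]
where $S(t_n-T)x_n\in\mathfrak{B}$. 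The second term thus lies in the precompact set $S_2(T)\mathfrak{B}$, while the first has norm at most $\rho(T)$. Consequently, given $\varepsilon>0$, choosing $T$ with $\rho(T)<\varepsilon/2$ shows that the tail $\{S(t_n)x_n:n\geq N\}$ lies in the $(\varepsilon/2)$-neighbourhood of the totally bounded set $S_2(T)\mathfrak{B}$ and hence admits a finite $\varepsilon$-net; adjoining the finitely many points $S(t_1)x_1,\dots,S(t_{N-1})x_{N-1}$ produces a finite $\varepsilon$-net for the whole sequence. Therefore $\{S(t_n)x_n\}$ is totally bounded, and since $H$ is complete it is precompact; this is exactly asymptotic compactness.

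For the connectedness of $\mathcal{A}$, pick $R>0$ with $\mathfrak{B}\subset\overline{B(0,R)}=:\mathcal{B}$; then $\mathcal{B}$ is convex, hence connected, and one checks from (i) that $\mathcal{A}=\omega(\mathcal{B})$ as well. For each $t$, the set $U_t:=\bigcup_{s\geq t}S(s)\mathcal{B}$ is connected: $S(t)\mathcal{B}$ is connected (continuous image of $\mathcal{B}$ under $S(t)$), and any point $S(s)x\in U_t$ with $x\in\mathcal{B}$, $s\geq t$, is joined to $S(t)x\in S(t)\mathcal{B}$ inside $U_t$ by the path $r\mapsto S(r)x$, $r\in[t,s]$, which is continuous by the separate continuity of $S$. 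Now suppose $\mathcal{A}=A_1\cup A_2$ with $A_1,A_2$ nonempty, disjoint and compact, and put $\delta:=\mathrm{dist}(A_1,A_2)>0$; let $\mathcal{O}_i=\{x:\mathrm{dist}(x,A_i)<\delta/3\}$, so $\mathcal{O}_1\cap\mathcal{O}_2=\emptyset$. Since $\mathcal{A}$ attracts $\mathcal{B}$, there is $t_0$ with $U_{t_0}\subset\mathcal{O}_1\cup\mathcal{O}_2$; as $U_{t_0}$ is connected it lies in a single $\mathcal{O}_i$, say $\mathcal{O}_1$, whence $\mathcal{A}\subset\overline{U_{t_0}}\subset\overline{\mathcal{O}_1}$, which is disjoint from $A_2$ — a contradiction. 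Hence $\mathcal{A}$ is connected.

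The main obstacle is the asymptotic compactness step, and within it the bookkeeping of quantifiers: one must first fix $T$ large enough that the ``$S_1$'' contribution is uniformly small over the bounded absorbing set $\mathfrak{B}$, and only afterwards let $n\to\infty$ to exploit the precompactness of $S_2(T)\mathfrak{B}$, which (ii) grants only for each individual positive time. Everything else — the passage from total boundedness to precompactness via completeness of $H$, and the topological connectedness argument — is routine.
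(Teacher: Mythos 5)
Your proof is correct, and it follows the same route the paper itself relies on: the paper gives no proof of Proposition~\ref{thm:AttractorCriteria} but simply cites \cite[Theorem 3.8]{Ma_Wang_Zhong} and \cite[Theorem 11]{ChepyzhovContiPata2011}, and those characterizations rest on exactly the combination you verify, namely a bounded absorbing set together with asymptotic compactness obtained from the splitting $S(t)=S_1(t)+S_2(t)$. Your quantifier handling (fix $T$ first so that the $S_1$ contribution is uniformly small on $\mathfrak{B}$, then let $n\to\infty$ and use precompactness of $S_2(T)\mathfrak{B}$) and the connectedness argument via the connected sets $U_t$ are both sound.
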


We next introduce some notations and recall some basic properties associated with
probability measures defined on the general metric space $X$.  Let $Pr(X)$ be the set of all Borel
probability measures on $X$ with $\mathcal{B}(X)$ the associated
collection of Borel measurable sets.  For $\brlMsr \in Pr(X)$, we define
$\mbox{supp}(\brlMsr)$ to be the smallest closed set $E$ such that $\brlMsr(E) = 1$.
See e.g. \cite{Rudin1987}.\footnote{
Note that a related, but purely measure theoretic notion of a `carrier' is also sometimes used in this connection.
However, both as it concerns us here and in its usage in previous related works, cf. \cite{FoiasManleyRosaTemam1,BCFM95}, 
the two notions can be seen to be applied
in a completely equivalent fashion.  We will make this equivalence precise after the statement of the
main Theorems~\ref{thm:Prop1},\ref{thm:Prop2} in Remark~\ref{rmk:PropMisc}, (iii) below.
}
Take $C(X)$ (resp. $C_b(X)$) to be
the collection of real-valued continuous (resp. bounded continuous)
functions defined on $X$. A measure $\brlMsr \in Pr(X)$ is said to be {\it
invariant} (relative to $\{ S(t)\}_{t \geq 0}$)  if
\begin{equation}\label{eq:Invar}
 \brlMsr(E) = \brlMsr( S(t)^{-1} E),
  \textrm{ for all } t \geq 0 \textrm{ and every } E \in \mathcal{B}(X),
\end{equation}
or equivalently if
\begin{equation}\label{eq:InvarWeak}
\int_X \varphi(x) \dr \brlMsr(x) =  \int_X \varphi(S(t)x) \dr \brlMsr (x),
 \textrm{ for all } t \geq 0 \textrm{ and every } \varphi \in C_b(X).
\end{equation}
Note that, for every invariant measure $\brlMsr $, $supp(\brlMsr )$ is
contained in the global attractor $\mathcal{A}$.
For completeness we recall the proof of this
fact, which is elementary, in an Appendix.
See Lemma~\ref{thm:InvarOnAttr} below.

Recall that a sequence $\{\brlMsr _{n}\}_{n \geq 0} \subset Pr(X)$ is said
to converge weakly to a measure $\brlMsr $ iff
$\lim_{n \rightarrow \infty}\int_{X}\varphi d \brlMsr _{n}= \int_{X} \varphi d \brlMsr $,
for every $\varphi \in C_{b}(X)$.  As such $\{\brlMsr _{n}\}_{n \geq 0}$
may be said to be  \emph{weakly compact} if we can extract from  $\{\brlMsr _{n}\}_{n \geq 0}$
a weakly convergent subsequence. On the other hand we say such
a collection $\{\brlMsr _{n}\}_{n \geq 0}$ is \emph{tight} if, for every $\epsilon >0$
there is a corresponding compact set $K_{\epsilon} \subset X$ so that
$\brlMsr _{n}(K_{\epsilon}) \geq 1 - \epsilon$, for every $n$.
Classically these two notions, tightness
and weak compactness,
are equivalent, a result usually referred to as Prokhorov's theorem. See e.g.
 \cite{Billingsley1}.

Let us also recall a special case of the classical Kakutani-Riesz Representation
theorem, as suits for our purposes below.  See e.g. \cite{Rudin1987} for further details.
\begin{Lem}\label{thm:KakRiesz}
   Let $K$ be a compact Hausdorff space and suppose that $\mathfrak{L}$
   is a positive linear functional on $C(K)$ (the continuous real valued functions
   on $K$ with the usual $\mbox{sup}$ norm).
   Then there exists a unique positive Borel measure $\brlMsr $  on $K$ such that,
   for every $\varphi \in C(K)$,
   $
   \mathfrak{L}(\varphi) = \int_{K} \varphi(x) d \brlMsr (x)
   $.
\end{Lem}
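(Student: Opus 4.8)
The plan is to build the representing measure $\brlMsr$ directly from the functional $\mathfrak{L}$ by the classical construction (cf.\ \cite{Rudin1987}), exploiting the fact that a compact Hausdorff space is normal (and, in all cases relevant to this paper, is moreover compact metrizable). As a preliminary one notes that positivity forces monotonicity — $f \le g$ in $C(K)$ gives $\mathfrak{L}(f) \le \mathfrak{L}(g)$ — and hence, taking $g = \|f\|_\infty \cdot 1$, the bound $|\mathfrak{L}(f)| \le \mathfrak{L}(1)\,\|f\|_\infty$; thus $\mathfrak{L}$ is bounded and $\mathfrak{L}(1) < \infty$ will be the total mass of the measure to be constructed.

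For an open set $V \subset K$ I would set
\[
   \nu(V) := \sup\bigl\{\, \mathfrak{L}(f) \;:\; f \in C(K),\ 0 \le f \le 1,\ \mbox{supp}(f) \subset V \,\bigr\},
\]
and for an arbitrary subset $E \subset K$ define $\nu^*(E) := \inf\{\nu(V) : E \subset V,\ V\text{ open}\}$. Urysohn's lemma supplies the continuous test functions needed to verify the standard package of facts: $\nu^*$ is an outer measure; every Borel set is $\nu^*$-measurable in the Carath\'eodory sense; and the restriction $\brlMsr$ of $\nu^*$ to $\mathcal{B}(K)$ is a finite Borel measure with $\brlMsr(K) = \mathfrak{L}(1)$, outer regular and inner regular on open sets.

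It then remains to establish the representation identity, and for this it suffices to prove the one-sided inequality $\mathfrak{L}(\varphi) \le \int_K \varphi \, \dr\brlMsr$ for every $\varphi \in C(K)$, since replacing $\varphi$ by $-\varphi$ yields the reverse inequality and hence equality. To obtain it, one partitions the compact range of $\varphi$ into finitely many short intervals, uses outer regularity to enclose the corresponding level sets in open sets of nearly the right measure, constructs a finite partition of unity subordinate to this open cover via Urysohn's lemma, applies $\mathfrak{L}$ term by term, and lets the mesh of the partition tend to zero. I expect this bookkeeping — together with the countable additivity of $\nu^*$ on $\mathcal{B}(K)$ — to be the technical heart of the argument, and the place where all of the hypotheses (positivity, continuity of $\varphi$, compactness and normality of $K$) genuinely enter.

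Finally, for uniqueness, if $\brlMsr_1$ and $\brlMsr_2$ are positive Borel measures both representing $\mathfrak{L}$ then they are finite (with mass $\mathfrak{L}(1)$) and agree on $\int_K \varphi\,\dr(\cdot)$ for all $\varphi \in C(K)$. Since $K$ is compact metric — the only case we need — every finite Borel measure on it is regular, hence determined by its values on open sets; approximating $\indFn{V}$ from below by the increasing sequence $f_n := \min\{1,\, n\,\mathrm{dist}(\cdot,\,K \setminus V)\} \in C(K)$ and applying monotone convergence yields $\brlMsr_1(V) = \brlMsr_2(V)$ for every open $V$, whence $\brlMsr_1 = \brlMsr_2$ on $\mathcal{B}(K)$. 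The outer-measure axioms and this uniqueness step are comparatively routine; the representation identity is the main obstacle.
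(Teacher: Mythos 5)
The paper offers no proof of this lemma at all---it is stated as a recalled special case of the classical Kakutani--Riesz representation theorem with a citation to \cite{Rudin1987}---and your outline is precisely the standard construction from that reference (the outer measure built from $\nu(V)=\sup\{\mathfrak{L}(f): 0\le f\le 1,\ \mathrm{supp}(f)\subset V\}$, Urysohn partitions of unity for the representation identity, and regularity for uniqueness), so it is correct and matches the intended source. Your one genuine refinement is the observation that uniqueness among \emph{all} Borel measures requires regularity, which you rightly secure by noting that in every application here $K=\mathcal{A}$ is compact metric; this is a real (if standard) subtlety that the paper's statement glosses over for general compact Hausdorff $K$.
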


Finally we turn to the notion of a {\it generalized Banach limit} which
is defined as follows:
\begin{Def}\label{def:GBL}
Consider the collection $\mathcal{B}_+$ of all bounded real-valued
functions on $[0,\infty)$ endowed with sup norm.
A generalized Banach limit, which we denote by
$\underset{t\rightarrow \infty}{\rm LIM}$,
is any linear functional on $\mathcal{B}_+$ such
that
\begin{itemize}
\item[(a)] $\LIM g(t)\geq 0$ for
all $g\in \mathcal{B}_+$ with $g(s) \geq 0$, for all $s\geq 0$.
\item[(b)] $\LIM g(t) =\underset{t\rightarrow
\infty}\lim g(t)$ for all $g\in\mathcal{B}_+$ for which the usual
limit exists.
\end{itemize}
\end{Def}
\noindent
It is not hard to establish the existence of such
a positive linear functional as a consequence of
the Hahn-Banach theorem.
Note also that, for any such $\LIM$, it may be shown that
\begin{equation}\label{eq:LIMbasicinfsup}
\left| \LIM g(T) \right|
\leq  \limsup_{T \rightarrow \infty} |g(T)|,
\end{equation}
for any $g \in \mathcal{B}_{+}$.
We will use this observation frequently below.
See e.g. \cite{Lax2002} for further background and properties.

With these preliminaries in hand we now state the first main result
which shows that \eqref{eq:WeakErgodicAveGeneral}
holds for any continuous observable in the case when $\brlMsr _0$ is a Dirac measure.
\begin{Thm}\label{thm:Prop1}
    Suppose that $(X,d)$ is a complete, separable metric
    space and $\{ S(t)\}_{t \geq 0}$ is a continuous
    semigroup on $X$ that possesses a global attractor
    $\mathcal{A}$.  Fix a generalized Banach limit $\LIM$.  Then,
    for any $x_0 \in X$, there exists a unique invariant measure
    $\brlMsr  \in Pr(X)$  for $\{ S(t)\}_{t \geq 0}$ such that
    \begin{equation}\label{eq:WeakErgodicAvePt}
       \int_{X} \varphi(x) \dr \brlMsr (x)
       = \LIM \frac{1}{T}\int_0^T \varphi(S(t) x_0)  \dr t,
       \quad \textrm{ for any } \varphi \in C(X),
    \end{equation}
    and such that $\mbox{supp}(\brlMsr ) \subseteq \mathcal{A}$.
\end{Thm}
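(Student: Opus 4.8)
The plan is to construct the functional $\varphi \mapsto \LIM \frac{1}{T}\int_0^T \varphi(S(t)x_0)\,\dr t$ on an appropriate space of continuous functions, show it is a well-defined positive linear functional, represent it via the Kakutani--Riesz theorem (Lemma~\ref{thm:KakRiesz}) as integration against a measure supported on a compact set, verify invariance, and finally extend the identity \eqref{eq:WeakErgodicAvePt} from bounded continuous functions to \emph{all} continuous functions using the growth-control lemma (Lemma~\ref{thm:BasicMetricSpaceBS}).

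\textbf{Step 1: Reduce to a compact set and set up the functional.} Since $\{S(t)\}_{t\ge 0}$ has a global attractor $\mathcal{A}$, the orbit $\{S(t)x_0 : t \ge 0\}$ is attracted to $\mathcal{A}$; in particular $\gamma := \overline{\bigcup_{t \ge 0} S(t) x_0} \cup \mathcal{A}$ should be a compact subset of $X$ (the closure of a relatively compact set, using that the tail of the orbit lies in any neighborhood of $\mathcal{A}$ and the initial segment $\{S(t)x_0 : 0 \le t \le t^*\}$ is compact by continuity). Actually, it is cleaner to work with a compact set $\mathcal{K}$ that absorbs the orbit eventually and contains $\mathcal{A}$. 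Define, for $\varphi \in C(\mathcal{K})$,
\begin{equation*}
\mathfrak{L}(\varphi) := \LIM \frac{1}{T}\int_0^T \varphi(S(t)x_0)\,\dr t.
\end{equation*}
This is well-defined: $t \mapsto \varphi(S(t)x_0)$ is continuous and bounded on $[0,\infty)$ (the orbit eventually stays in $\mathcal{K}$ where $\varphi$ is bounded, and on $[0,t^*]$ it is continuous on a compact set), so $T \mapsto \frac{1}{T}\int_0^T \varphi(S(t)x_0)\,\dr t$ lies in $\mathcal{B}_+$ and $\LIM$ applies. Linearity in $\varphi$ is immediate from linearity of the integral and of $\LIM$; positivity follows from property (a) of $\LIM$ together with positivity of the integral. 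Hence by Lemma~\ref{thm:KakRiesz} there is a unique Borel measure $\brlMsr$ on $\mathcal{K}$ with $\mathfrak{L}(\varphi) = \int_{\mathcal{K}} \varphi\,\dr\brlMsr$ for all $\varphi \in C(\mathcal{K})$; taking $\varphi \equiv 1$ and using property (b) of $\LIM$ shows $\brlMsr(\mathcal{K}) = 1$, so $\brlMsr \in Pr(X)$ after extending by zero off $\mathcal{K}$.

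\textbf{Step 2: Invariance and support.} To check invariance it suffices (by \eqref{eq:InvarWeak}) to verify $\int_X \varphi(x)\,\dr\brlMsr(x) = \int_X \varphi(S(s)x)\,\dr\brlMsr(x)$ for all $s \ge 0$ and $\varphi \in C_b(X)$. The right-hand side equals $\LIM \frac{1}{T}\int_0^T \varphi(S(t+s)x_0)\,\dr t = \LIM \frac{1}{T}\left(\int_s^{T+s} \varphi(S(t)x_0)\,\dr t\right)$, and a change of variables shows this differs from $\LIM \frac{1}{T}\int_0^T \varphi(S(t)x_0)\,\dr t$ by a term bounded by $\frac{2s\|\varphi\|_\infty}{T} \to 0$, which $\LIM$ sends to $0$ by \eqref{eq:LIMbasicinfsup}. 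Hence $\brlMsr$ is invariant, and then Lemma~\ref{thm:InvarOnAttr} gives $\mbox{supp}(\brlMsr) \subseteq \mathcal{A}$. Uniqueness: any invariant measure $\brlMsr'$ satisfying \eqref{eq:WeakErgodicAvePt} must agree with $\mathfrak{L}$ on $C_b(X)$, hence (restricting to $C(\mathcal{A})$, and using that both measures are supported in the compact set $\mathcal{A}$ where continuous functions separate points and determine the measure) $\brlMsr' = \brlMsr$.

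\textbf{Step 3: Extend from $C_b(X)$ to $C(X)$ --- the main obstacle.} The identity \eqref{eq:WeakErgodicAvePt} for $\varphi \in C_b(X)$ follows directly from Steps 1--2 once we note $\varphi|_{\mathcal{K}} \in C(\mathcal{K})$. The genuine difficulty is unbounded $\varphi \in C(X)$: then $t \mapsto \varphi(S(t)x_0)$ is still bounded (the orbit eventually enters the compact $\mathcal{K}$ and the initial segment is compact), so the left and right sides of \eqref{eq:WeakErgodicAvePt} both make sense, but one cannot directly invoke density of $C_b$ in $C$. This is exactly where Lemma~\ref{thm:BasicMetricSpaceBS} enters: given a compact set (here $\mathcal{A}$, or $\mathcal{K}$) and an arbitrary continuous $\varphi$, one truncates $\varphi$ to a bounded continuous $\varphi_N$ that agrees with $\varphi$ on a neighborhood of the compact set and is controlled outside; since $\brlMsr$ lives on $\mathcal{A}$ and the orbit spends all but vanishing time-fraction near $\mathcal{A}$, the difference between \eqref{eq:WeakErgodicAvePt} for $\varphi$ and for $\varphi_N$ can be made to vanish --- the left side because $\brlMsr(X \setminus \mathcal{A}) = 0$, the right side because $\frac{1}{T}\int_0^T \indFn{S(t)x_0 \notin U}\,\dr t \to 0$ for any neighborhood $U$ of $\mathcal{A}$ (attraction of the orbit), combined with the growth bound on $\varphi$ from the lemma and \eqref{eq:LIMbasicinfsup}. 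Passing $\LIM$ through this estimate closes the argument. I expect this truncation/neighborhood bookkeeping, and the precise bound supplied by Lemma~\ref{thm:BasicMetricSpaceBS}, to be the crux; everything else is a routine application of Kakutani--Riesz plus the elementary properties of $\LIM$.
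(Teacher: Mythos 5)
Your proof is correct, but it follows a genuinely different route from the paper's. You apply the Kakutani--Riesz theorem on the compact set $\mathcal{K}=\overline{\{S(t)x_0:t\geq 0\}}\cup\mathcal{A}$; this compactness claim does hold (the orbit is totally bounded, since its tail lies in $\mathcal{A}_\epsilon$, which is totally bounded because $\mathcal{A}$ is compact, while the initial segment is the continuous image of a compact interval; completeness of $X$ then gives compactness of the closure). Since the entire orbit lies in $\mathcal{K}$, every $\varphi\in C(X)$ restricts to an element of $C(\mathcal{K})$ and \eqref{eq:WeakErgodicAvePt} follows at once; the price is that the support of the resulting measure is a priori only contained in $\mathcal{K}$, so you must first establish invariance and then invoke Lemma~\ref{thm:InvarOnAttr} to conclude $\mbox{supp}(\brlMsr)\subseteq\mathcal{A}$ --- which you do. The paper instead shows, via Lemma~\ref{thm:BasicMetricSpaceBS}, that the functional $\mathfrak{L}_{x_0}$ depends only on $\varphi|_{\mathcal{A}}$, and then applies Kakutani--Riesz directly on $C(\mathcal{A})$, using the Dugundji extension theorem to pass from $C(\mathcal{A})$ back to $C(X)$; this yields $\mbox{supp}(\brlMsr)\subseteq\mathcal{A}$ for free and avoids any discussion of the orbit closure. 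Two small remarks. First, your $\mathcal{K}$ must contain the \emph{whole} orbit, not merely absorb it eventually, for $\mathfrak{L}(\varphi)$ to be defined on $C(\mathcal{K})$; your set $\gamma$ does, so you should commit to that choice rather than to a set that only ``absorbs the orbit eventually.'' Second, your Step 3 is superfluous in your own setup and is not ``the crux'': once $\brlMsr$ is supported in $\mathcal{K}$ and the orbit lies in $\mathcal{K}$, the identity for unbounded $\varphi\in C(X)$ is immediate by restriction, with no truncation or density argument needed. The truncation-near-the-attractor argument you sketch there is essentially the paper's Step 2 (the reduction to $\varphi|_{\mathcal{A}}$ via Lemma~\ref{thm:BasicMetricSpaceBS}, part (b)), which is needed in the paper's approach precisely because it works with $\mathcal{A}$ rather than with the orbit closure.
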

\noindent The second result establishes \eqref{eq:WeakErgodicAveGeneral}
for any initial probability measure $\brlMsr _0$ but requires further in this generality
that the observable be both continuous and bounded.
\begin{Thm}\label{thm:Prop2}
    Suppose that $(X,d)$ is a complete, separable metric
    space and $\{ S(t)\}_{t \geq 0}$ is a continuous
    semigroup on $X$ that possesses a global attractor
    $\mathcal{A}$.  Fix a generalized Banach Limit $\LIM$.
    Then, for any $\brlMsr _0 \in Pr(X)$, there exists a unique invariant
    measure $\brlMsr  \in Pr(X)$ for $\{ S(t)\}_{t \geq 0}$ such that
    \begin{equation}\label{eq:WeakErgodicAve}
       \int_{X} \varphi(x) \dr \brlMsr (x)
       = \LIM \frac{1}{T}\int_0^T \int_X \varphi(S(t) x) \dr \brlMsr _0(x) \dr t,
       \quad \textrm{ for any } \varphi \in C_b(X),
    \end{equation}
    and such that $\mbox{supp}(\brlMsr ) \subseteq \mathcal{A}$.
\end{Thm}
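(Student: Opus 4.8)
The plan is to define the linear functional on $C_b(X)$ given by the right-hand side of \eqref{eq:WeakErgodicAve}, to show that it only ``sees'' the attractor $\mathcal{A}$, and then to realize it via the Kakutani--Riesz theorem (Lemma~\ref{thm:KakRiesz}) on the \emph{compact} space $\mathcal{A}$; the point is that $X$ itself need be neither compact nor locally compact, so the representation must be routed through $\mathcal{A}$. One cannot simply average the pointwise measures furnished by Theorem~\ref{thm:Prop1} against $\brlMsr_0$, since the non-constructive character of $\LIM$ obstructs the measurability and the interchange of $\LIM$ with $\int\dr\brlMsr_0$ that this would require; a direct argument is cleaner.

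\textbf{Step 1 (the functional).} For $\varphi\in C_b(X)$ set $h_\varphi(t):=\int_X\varphi(S(t)x)\,\dr\brlMsr_0(x)$. By the separate continuity of $S$ together with bounded convergence, $h_\varphi$ is continuous on $[0,\infty)$ with $|h_\varphi|\le\|\varphi\|_\infty$, so $T\mapsto\frac1T\int_0^T h_\varphi(t)\,\dr t$ ($T>0$) extends to an element of $\mathcal{B}_+$ and
\[
\mathfrak{L}(\varphi):=\LIM\frac1T\int_0^T\int_X\varphi(S(t)x)\,\dr\brlMsr_0(x)\,\dr t
\]
is well defined. Linearity and positivity of $\LIM$ and of the integrals make $\mathfrak{L}$ a positive linear functional on $C_b(X)$ with $\mathfrak{L}(1)=1$.

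\textbf{Step 2 (concentration on $\mathcal{A}$; the main obstacle).} I claim $\mathfrak{L}(\varphi)=0$ whenever $\varphi\in C_b(X)$ vanishes on $\mathcal{A}$, so that $\mathfrak{L}(\varphi)$ depends on $\varphi$ only through $\varphi|_{\mathcal{A}}$. Fix such a $\varphi$, put $M:=\|\varphi\|_\infty$ and let $\epsilon>0$. Since $X$ is Polish, $\brlMsr_0$ is tight (see e.g.\ \cite{Billingsley1}): choose a compact $K_\epsilon\subset X$ with $\brlMsr_0(X\setminus K_\epsilon)<\epsilon$. For $\delta>0$ the closed set $\{x:|\varphi(x)|\ge\delta\}$ is disjoint from the compact set $\mathcal{A}$, hence at a positive distance $\rho$ from it, so $|\varphi|<\delta$ on the $\rho$-neighborhood of $\mathcal{A}$; this is the elementary topological observation behind Lemma~\ref{thm:BasicMetricSpaceBS}, immediate here because $\varphi$ is bounded. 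Since $K_\epsilon$ is bounded, the attraction property yields $t^*=t^*(\epsilon,\delta)$ with $S(t)K_\epsilon\subset\{x:d(x,\mathcal{A})<\rho\}$ for $t\ge t^*$; splitting $h_\varphi(t)$ over $K_\epsilon$ and its complement then gives $|h_\varphi(t)|\le\delta+M\epsilon$ for all $t\ge t^*$. Consequently $\limsup_{T\to\infty}\big|\frac1T\int_0^T h_\varphi(t)\,\dr t\big|\le\delta+M\epsilon$, and \eqref{eq:LIMbasicinfsup} forces $|\mathfrak{L}(\varphi)|\le\delta+M\epsilon$; letting $\delta,\epsilon\to0$ gives $\mathfrak{L}(\varphi)=0$. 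This step is where dissipativity (through $\mathcal{A}$) and the arbitrariness of $\brlMsr_0$ (through tightness) genuinely enter, and it is the main obstacle.

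\textbf{Step 3 (representation, invariance, uniqueness).} By the Tietze extension theorem each $\psi\in C(\mathcal{A})$ extends to some $\varphi\in C_b(X)$, and by Step 2 the value $\hat{\mathfrak{L}}(\psi):=\mathfrak{L}(\varphi)$ is independent of the extension; $\hat{\mathfrak{L}}$ is a positive linear functional on $C(\mathcal{A})$ with $\hat{\mathfrak{L}}(1)=1$, so Lemma~\ref{thm:KakRiesz} yields a unique Borel probability measure $\brlMsr$ on $\mathcal{A}$ with $\hat{\mathfrak{L}}(\psi)=\int_{\mathcal{A}}\psi\,\dr\brlMsr$. Extending $\brlMsr$ to $X$ by $\brlMsr(E):=\brlMsr(E\cap\mathcal{A})$, we obtain $\brlMsr\in Pr(X)$ with $\mbox{supp}(\brlMsr)\subseteq\mathcal{A}$ and $\int_X\varphi\,\dr\brlMsr=\mathfrak{L}(\varphi)$ for every $\varphi\in C_b(X)$, i.e.\ \eqref{eq:WeakErgodicAve}. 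For invariance, observe that $\int_X\varphi(S(s)x)\,\dr\brlMsr(x)=\mathfrak{L}(\varphi\circ S(s))$, which by the semigroup property equals $\LIM$ of $\frac1T\int_0^T h_\varphi(t+s)\,\dr t$; the latter differs from $\frac1T\int_0^T h_\varphi(t)\,\dr t$ by a quantity bounded by $2\|\varphi\|_\infty s/T\to0$, so the two $\LIM$'s coincide and \eqref{eq:InvarWeak} holds. Finally, any $\brlMsr'\in Pr(X)$ satisfying \eqref{eq:WeakErgodicAve} agrees with $\brlMsr$ on all of $C_b(X)$, whence $\brlMsr'=\brlMsr$, giving uniqueness.
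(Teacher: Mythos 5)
Your proof is correct, and in its key step it takes a genuinely different route from the paper. The paper proceeds in two stages: first it treats $\brlMsr_0$ with bounded support (where the whole support is eventually absorbed into $\mathcal{A}_\delta$ and Lemma~\ref{thm:BasicMetricSpaceBS}(b) applies), and then it passes to general $\brlMsr_0$ by conditioning on an exhausting sequence of compact sets $K_n$ of $\brlMsr_0$-measure at least $1-1/n$, producing invariant measures $\brlMsr^n$ supported in $\mathcal{A}$ and extracting a weak limit via the Prokhorov theorem. You instead fold tightness directly into the time-average estimate: splitting $h_\varphi(t)$ over a compact $K_\epsilon$ (where the orbit enters the $\rho$-neighborhood of $\mathcal{A}$ on which $|\varphi|<\delta$) and its complement (where you only use $\|\varphi\|_\infty\,\brlMsr_0(X\setminus K_\epsilon)\le M\epsilon$) gives $|\mathfrak{L}(\varphi)|\le\delta+M\epsilon$ in one pass, so the Kakutani--Riesz representation on $\mathcal{A}$ applies immediately to an arbitrary $\brlMsr_0$. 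This eliminates the Prokhorov/weak-convergence stage entirely and is arguably cleaner; what the paper's two-stage argument buys in exchange is an explicit approximation of $\brlMsr$ by the invariant measures associated to the conditioned initial distributions, which is sometimes useful in its own right. A further small difference: where you need that $|\varphi|<\delta$ on a neighborhood of $\mathcal{A}$, you observe that the closed set $\{|\varphi|\ge\delta\}$ is at positive distance from the compact $\mathcal{A}$ --- a shortcut available because $\varphi$ is bounded and vanishes on $\mathcal{A}$ --- whereas the paper invokes its covering-based Lemma~\ref{thm:BasicMetricSpaceBS}(b); both are adequate here. The remaining steps (well-definedness of the functional via \eqref{eq:LIMbasicinfsup}, extension from $C(\mathcal{A})$ --- Tietze for you, Dugundji in the paper, with the same need to note that a nonnegative $\psi$ admits a nonnegative extension so that positivity of $\hat{\mathfrak{L}}$ follows --- the shift computation for invariance, and uniqueness from the fact that Borel probability measures on a metric space are determined by their integrals against $C_b(X)$) match the paper's in substance.
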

 We conclude this section with some remarks concerning
Theorems~\ref{thm:Prop1}, \ref{thm:Prop2}.
\begin{Rmk}\label{rmk:PropMisc}
\mbox{}
\begin{itemize}
\item[(i)] The uniqueness of the invariant measures $\brlMsr$
satisfying \eqref{eq:WeakErgodicAvePt} or \eqref{eq:WeakErgodicAve}
follows as a direct consequence of the regularity of borel probability measures on 
metric spaces;
see e.g. \cite{Billingsley1}.  We therefore need only to establish the \emph{existence}
of such measures in the proofs below.
\item[(ii)] By considering, for $x_{0} \in X$, the Dirac measure $\brlMsr _{0} = \delta_{x_0}$ we
partially recover Proposition~\ref{thm:Prop1} from
Proposition~\ref{thm:Prop2}. Notwithstanding, we separate the two
results since, in the former case of Proposition~\ref{thm:Prop1}, we
are able to establish \eqref{eq:WeakErgodicAvePt} in the larger class
of test functions $C(X)$.
Note that the use of the smaller collection $C_b(X)$ for
the space of test functions in Proposition~\ref{thm:Prop2} allows us
in particular to work in the topology of weak convergence of measures.
This is needed to be able to pass to a limit in a sequence of approximating measures
via the Prokhorov Theorem.   See Step 2 of the proof of Proposition~\ref{thm:Prop2}
below.
\item[(iii)] Theorem~\ref{thm:Prop1}, in the given generality of complete separable metric spaces
  acted on by arbitrary semigroups possessing a global attractor, also appears in \cite{LukaszewiczRealRobinson2011}.
  We give a different proof of Theorem~\ref{thm:Prop1} below, based on Lemma~\ref{thm:BasicMetricSpaceBS}.   
  Lemma~\ref{thm:BasicMetricSpaceBS} is also used in an essential way in the proof of Theorem~\ref{thm:Prop2}.
  In contrast to Theorem~\ref{thm:Prop1}, Theorem~\ref{thm:Prop2} establishes results analogous to those appearing 
  in previous works in a much greater generality and is thus new.
\item[(iv)]
In other related works, e.g. \cite{FoiasManleyRosaTemam1,BCFM95},
the notion of a \emph{carrier} is sometimes used as an alternative
to the support of a measure. Recall that an element $\brlMsr \in
Pr(X)$ is said to be \emph{carried by} a set $E \in \mathcal{B}(X)$
(or that $E$ is a \emph{carrier for} $\brlMsr$) if $\brlMsr(E) = 1$.
Of course, $\brlMsr$ is carried by $\mbox{supp}(\brlMsr)$ (the
smallest closed set of full measure), but this particular carrier is
not unique in general (and rarely would be in practice). In any
case,  since the global attractor $\mathcal{A}$ is closed, when say
in Theorems~\ref{thm:Prop1}, \ref{thm:Prop2} that
`$\mbox{supp}(\brlMsr) \subseteq \mathcal{A}$' we may equivalently
state that `$\brlMsr$ is carried by $\mathcal{A}$'.
\item[(v)] The so called Krylov--Bogoliubov procedure provides another means of associating
invariant measures with time averages starting from a given fixed initial measure $\mu_0$. 
In \cite{LukaszewiczRealRobinson2011} it was shown that this procedure could be used to 
establish the existence of invariant measures for the class of dynamical systems considered
in, for example, Theorems~\ref{thm:Prop1}, \ref{thm:Prop2}.
\end{itemize}
\end{Rmk}

\section{Proof of abstract results}
\label{sec:ProofOfAbsResults}
We turn in this section to the proof of Theorem~\ref{thm:Prop1}, \ref{thm:Prop2}.
Both proofs  lean heavily on the following
general topological lemma which is
established along elementary lines.
 \begin{Lem}\label{thm:BasicMetricSpaceBS}
     Let $(X,d)$ be a metric space and consider any
     compact subset $K \subset X$.
     For $\epsilon > 0$ we let,
     $K_\epsilon := \{ x \in X: \inf_{y \in K} d(x,y)  < \epsilon \}.$
Then the following properties hold:
   \begin{itemize}
     \item[(a)]       For every $\varphi \in C(X)$,
       there exists $\epsilon >0$ so that
       $\sup\limits_{x \in K_\epsilon} |\varphi(x)| < \infty. $
   \item[(b)] Suppose $\varphi, \psi \in C(X)$ are
     such that $\varphi(x) = \psi(x)$ for every $x \in K$, then
     for every $\epsilon > 0$ there exists $\delta > 0$ such
     that,
     $ \sup\limits_{ x \in K_\delta} |\varphi(x) - \psi(x)| < \epsilon$.
   \end{itemize}
\end{Lem}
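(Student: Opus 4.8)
The plan is to exploit compactness of $K$ together with continuity of the functions involved, via a standard contradiction-by-sequences argument. For part (a), suppose the conclusion fails. Then for every $\epsilon > 0$ the supremum of $|\varphi|$ over $K_\epsilon$ is infinite; taking $\epsilon = 1/n$ I can pick a point $x_n \in K_{1/n}$ with $|\varphi(x_n)| > n$. By definition of $K_{1/n}$ there is $y_n \in K$ with $d(x_n, y_n) < 1/n$. Since $K$ is compact, after passing to a subsequence I may assume $y_n \to y \in K$, and then $d(x_n, y) \le d(x_n, y_n) + d(y_n, y) \to 0$, so $x_n \to y$ as well. By continuity of $\varphi$ at $y$, the sequence $\varphi(x_n)$ converges to $\varphi(y)$, hence is bounded — contradicting $|\varphi(x_n)| > n$. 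Therefore some $\epsilon > 0$ works.

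For part (b), I would argue similarly. Suppose that for some fixed $\epsilon > 0$ there is no admissible $\delta$. Then for each $n$, taking $\delta = 1/n$, there exists $x_n \in K_{1/n}$ with $|\varphi(x_n) - \psi(x_n)| \ge \epsilon$. As before, choose $y_n \in K$ with $d(x_n, y_n) < 1/n$, pass to a subsequence with $y_n \to y \in K$, and conclude $x_n \to y$. The function $\varphi - \psi$ is continuous on $X$ and vanishes on $K$ by hypothesis, so $(\varphi - \psi)(x_n) \to (\varphi - \psi)(y) = 0$, contradicting $|\varphi(x_n) - \psi(x_n)| \ge \epsilon$ for all $n$. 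Hence a suitable $\delta$ exists for every $\epsilon > 0$.

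There is really no serious obstacle here; the only point requiring any care is the passage from $y_n \to y$ to $x_n \to y$, which needs the triangle inequality and the fact that the ``thickening'' radii $1/n$ tend to $0$ — this is what forces the chosen points to accumulate on $K$ itself rather than merely near it. One could alternatively phrase part (a) directly: $K_\epsilon$ has compact closure for suitable $\epsilon$ when $X$ is, say, locally compact, but since the paper works in an arbitrary metric space I would avoid that route and stick with the sequential argument above, which uses nothing beyond compactness of $K$ and continuity. Part (b) is genuinely just part (a) applied to the continuous function $\varphi - \psi$ with the added input that it is zero on $K$; in fact one could deduce a quantitative version (uniform continuity of $\varphi-\psi$ on a compact neighborhood of $K$), but the stated $\epsilon$–$\delta$ form is exactly what the sequential contradiction delivers with least fuss.
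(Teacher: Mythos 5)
Your proof is correct, but it takes a genuinely different route from the paper's. The paper argues directly via a finite subcover: for each $x \in K$ it picks a radius $\delta_x$ on which $\varphi$ oscillates by less than $1$ (resp.\ on which $|\varphi-\varphi(x)| + |\psi - \psi(x)| < \epsilon$ for part (b)), covers $K$ by the shrunken balls $B(x,\delta_x/3)$, extracts a finite subcover, and takes the admissible $\epsilon$ (resp.\ $\delta$) to be one third of the smallest radius occurring in it; a triangle-inequality computation then places every point of $K_\epsilon$ inside one of the original balls, yielding the explicit bound $M = 1 + \max_j |\varphi(x_j)|$ in (a) and the bound $\epsilon$ in (b). You instead argue by contradiction using sequential compactness: a putative sequence of bad points $x_n \in K_{1/n}$ is forced to accumulate at a point of $K$ precisely because the thickening radii shrink to zero, and continuity at that limit point contradicts the badness. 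Both arguments use nothing beyond compactness of $K$ and continuity, and both are valid in an arbitrary metric space, where compactness and sequential compactness coincide. Yours is shorter and avoids the covering bookkeeping; the paper's is constructive and exhibits the admissible $\epsilon$, $\delta$ and the bound explicitly in terms of the finite subcover. Your closing remark that (b) is essentially (a)-style reasoning applied to $\varphi - \psi$, which vanishes on $K$, mirrors the paper, which simply reruns the same covering argument for the pair $(\varphi,\psi)$.
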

\begin{proof}
For the first item, (a), fix $\varphi \in C(X)$. For every $x \in K$
we may
  choose $\delta = \delta_x > 0$ so that, for every
  $ y \in B(x, \delta_x) := \{y : d(x, y) < \delta_x\}$, $|\varphi(x) -
   \varphi(y)| < 1$.  Choosing numbers $\delta_{x} > 0$ in this manner
   we may form the open cover $\mathcal{C} = \{ B(x, \delta_x/3) : x \in K\}$
  for $K$.  Since $K$ is compact, we can extract from this cover a
  finite sub-cover
  $ \mathcal{C}' = \{ B(x_1, \delta_{x_1}/3), \ldots , B(x_n,
   \delta_{x_n}/3) \}$.   Take
   $\epsilon = \frac{\min\{ \delta_{x_1}, \ldots, \delta_{x_n}\}}{3}$
   and let $M = 1 + \max_{j = 1, \ldots, n}
   |\varphi(x_j)|$.

   Given any
   $x \in K_\epsilon$
   we may choose $y \in K$ such that $d(x,y) < 2 \epsilon$.  Since
   $\mathcal{C}'$ covers $K$ we may pick $x_j$ such that $d(y, x_j)
   < \delta_{x_j}/3$.  Combining these two observations we conclude
   perforce that $d(x, x_j) < 2 \epsilon + \delta_{x_j}/3 \leq \delta_{x_j}$ so that $|\varphi(x)| \leq M$.
   Since $x \in K_\epsilon$ was arbitrary to begin with this gives
   (a).

  We turn to the second item (b).  Fix $\epsilon > 0$.
  For every $x \in K$ we choose $\gamma_x > 0$ so that
  $ |\varphi(x) - \varphi(y)| + |\psi(x) - \psi(y)| < \epsilon $
  whenever $y \in B(x, \gamma_x)$.  Again, due to the
  compactness of $K$, we may cover $K$ with
  a finite collection $\{ B(x_1, \gamma_{x_1}/3) , \ldots, B(x_n, \gamma_{x_n}/3) \}$.
  Take $\delta = \frac{\min\{ \gamma_{x_1}, \dots, \gamma_{x_n} \}}{3}$.  Similarly to the previous case
  we observe
  that $K_\delta \subset \cup_{k =1}^n B(x_k, \gamma_k)$.
Fix arbitrary $y \in K_\delta$ and choose $k$ so that $y \in B(x_k, \gamma_k)$.
  Noting that
  $\varphi(x_k) = \psi(x_k)$, this implies
      $ |\varphi(y) - \psi(y)| = |\varphi(y) - \varphi(x_i) + \psi(x_i) - \psi(y)|
      \leq |\varphi(y) - \varphi(x_i)| + |\psi(x_i) - \psi(y)|
      < \epsilon$, as needed for (b).
       The proof is therefore complete.
\end{proof}

\subsection*{Proof of Theorem~\ref{thm:Prop1}:}
Fix any $x_{0} \in X$.  We proceed in steps.  First we show that the
generalized  Banach limit on the right hand side of
\eqref{eq:WeakErgodicAvePt} exists.  We then show that
the resulting functional is uniquely defined by the restriction of $\varphi$
to the global attractor $\mathcal{A}$.  Since $\mathcal{A}$ is compact,
this is sufficient to infer the needed
$\brlMsr$ via an application of the Kakutani-Riesz theorem. In the final
step we show that the $\brlMsr $ that we have found is indeed invariant.

  \noindent {\bf Step 1: Existence of the positive linear functional}.   
  To carry out this first step, we show that the map defined by
  $T \mapsto \tfrac{1}{T} \int_0^T \varphi(S(t) x_0) \mbox{d}t$
  is bounded over $[0,\infty)$ so that
  \begin{equation}\label{eq:LinFnDefProp1}
  \mathfrak{L}_{x_0}(\varphi) := \LIM \frac{1}{T} \int_0^T \varphi(S(t)x_0) \mbox{d}t,
  \quad \textrm{ for } \varphi \in C(X),
  \end{equation}
  is well defined as a positive linear functional on $C(X)$ (cf. Definition~\ref{def:GBL}).
Since, by assumption  $\{ S(t) \}_{t \geq 0}$ possesses a global attractor $\mathcal{A}$, we have
that,
  \begin{equation}\label{eq:absorbingAttractorNb}
  \textrm{for every } \epsilon >0 \textrm{, there exists a time }T_\epsilon \geq 0 \textrm{ such that }
    S(t)x_0 \in \mathcal{A}_\epsilon
       \textrm{ for every }
    t \geq T_\epsilon,
  \end{equation}
  where here and below, $\mathcal{A}_\epsilon = \{ x \in X :  \inf_{y \in \mathcal{A}} d(x,y) < \epsilon\}$.  Applying
  Lemma~\ref{thm:BasicMetricSpaceBS}, (a) to $\mathcal{A}$ and $\varphi$
  we may choose $\epsilon > 0$ so that
    $K_A := \sup_{x \in \mathcal{A}_\epsilon} |\varphi(x)| < \infty$.
 Taking $T_\epsilon$ as required in \eqref{eq:absorbingAttractorNb} for this value of
 $\epsilon$ we find also that
   $K_I := \sup_{t \in [0,T_\epsilon]} |\varphi(S(t) x_0)| < \infty$
 owing to the fact that the interval $[0,T_\epsilon]$ is compact and that $t \mapsto S(t)$ is
 continuous.  By now taking $K = \max\{K_I, K_A\}$, we infer,
 for every $T > 0$, that
   $\left|\frac{1}{T} \int_0^T \varphi( S(t) x_0) \dr t \right| \leq K < \infty$
 as needed to finally justify the definition of $\mathfrak{L}_{x_0}$ given in
 \eqref{eq:LinFnDefProp1}.

\noindent {\bf Step 2: Restriction to $\mathcal{A}$ and the
application of the Kakutani-Riesz theorem.} The next step will be to
show that $\mathfrak{L}_{x_0}(\varphi)$ depends only on the values of $\varphi$ on $\mathcal{A}$.  More precisely,
we establish that if $\varphi(x) = \tilde{\varphi}(x)$
 for every $x \in \mathcal{A}$ then $\mathfrak{L}_{x_0}(\varphi) = \mathfrak{L}_{x_0}(\tilde{\varphi})$.
 To this end we
 fix any $\epsilon >0$ and according to Lemma~\ref{thm:BasicMetricSpaceBS}, (b)
 choose a corresponding $\delta >0$ such that
  $\sup\limits_{ x \in \mathcal{A}_\delta} |\varphi(x) - \tilde{\varphi}(x)| < \epsilon$.
We now take $T_\delta$
so that $S(t)x_{0} \in \mathcal{A}_{\delta}$ for each $t \geq T_{\delta}$
 and let
   $\tilde{K}_\delta :=
   \sup_{t \in [0,T_\delta]} (|\varphi(S(t) x_0)| + |\tilde{\varphi}(S(t) x_0)|)$.
For the reasons noted for $K_I$ in the previous step above, $\tilde{K}_{\delta}$ is finite.
Using the basic properties
 of $\LIM$, \eqref{eq:LIMbasicinfsup},
 we estimate
 \begin{displaymath}
   \begin{split}
     \left|\mathfrak{L}_{x_0}(\varphi - \tilde{\varphi}) \right| =&
     \left| \underset{t \rightarrow \infty}{LIM}
    \frac{1}{T} \int_0^T \varphi(S(t)x_0) -\tilde{\varphi}(S(t)x_0)\dr t
    \right|
      \leq     \limsup_{T \rightarrow \infty} \left|
   \frac{1}{T} \int_0^T \varphi(S(t)x_0) -\tilde{\varphi}(S(t)x_0)\dr t
   \right|\\
     \leq&     \limsup_{T \rightarrow \infty}
  \frac{1}{T} \int_0^{T_\delta} |\varphi(S(t)x_0) -\tilde{\varphi}(S(t)x_0)|\dr t
   +    \limsup_{T \rightarrow \infty}
  \frac{1}{T} \int_{T_\delta}^T |\varphi(S(t)x_0) -\tilde{\varphi}(S(t)x_0)|\dr t\\
  \leq&     \limsup_{T \rightarrow \infty} \frac{T_\delta \tilde{K}_\delta}{T}
              + \limsup_{T \rightarrow \infty} \frac{1}{T} \int_{T_\delta}^T |\varphi(S(t)x_0) -\tilde{\varphi}(S(t)x_0)|\dr t\\
  \leq&    \limsup_{T \rightarrow \infty} \frac{T_\delta \tilde{K}_\delta}{T} +
  \limsup_{T \rightarrow \infty} \frac{(T- T_\delta)\epsilon }{T} \leq \epsilon.
  \end{split}
\end{displaymath}
 Since, to begin with, the choice of $\epsilon >0$ was arbitrary we conclude perforce that
 $\mathfrak{L}_{x_0}(\varphi - \tilde{\varphi}) = 0$ as desired.

 With this in hand we may now unambiguously define
 \begin{equation}\label{eq:GDefPt}
 \mathfrak{G}(\psi) = \mathfrak{L}_{x_{0}}( \ell(\psi)), \quad
 \textrm{ for } \psi \in C(\mathcal{A}),
 \end{equation}
where we take $\ell: C(\mathcal{A}) \rightarrow C(X)$ to be an extension
operator, $\ell(\psi)(x)
= \psi(x)$ for $x \in \mathcal{A}$, such that
\begin{equation}\label{eq:GoodExtDugndji}
    \inf_{x \in X} \ell(\varphi)(x) =  \inf_{x \in \mathcal{A}} \varphi(x),
    \quad   \sup_{x \in X} \ell(\varphi)(x) =  \sup_{x \in \mathcal{A}} \varphi(x).
\end{equation}
The existence of such an extension operator is
guaranteed by the Dugundji extension theorem, \cite[Theorem 4.1]{Dugundji1951}.
One may readily verify that $\mathfrak{G}$ is linear.  With
\eqref{eq:GoodExtDugndji}, \eqref{eq:LinFnDefProp1}
and Definition~\ref{def:GBL}, (a)
we see that $\mathfrak{G}$ is also positive.
Thus, by the Kakutani-Riesz representation theorem, recalled
above in Lemma~\ref{thm:KakRiesz}, there exists a unique positive,
finite, Borel measure $\brlMsr $ on $\mathcal{A}$ such that
\begin{equation}\label{eq;ReizeDef}
 \mathfrak{G}(\psi) = \int_{\mathcal{A}} \psi(x) \dr \brlMsr (x),  \quad
 \textrm{ for } \psi \in
 C(\mathcal{A}).
\end{equation}
Abusing notation slightly, we extend $\brlMsr $ to a Borel measure on all of $X$ by taking
$\brlMsr (E) := \brlMsr (E \cap \mathcal{A})$, $E \in \mathcal{B}(X)$.
Clearly, $\brlMsr (\mathcal{A}^c) =0$, and so with
\eqref{eq:LinFnDefProp1}, \eqref{eq:GDefPt}, \eqref{eq;ReizeDef}
it follows that, for every $\varphi \in C(X)$,
  $\mathfrak{L}_{x_{0}}(\varphi) = \int_{\mathcal{A}} \varphi(x) \dr \brlMsr (x) = \int_{X}
  \varphi(x) \dr \brlMsr (x)$, which is \eqref{eq:WeakErgodicAvePt}.
To see that $\brlMsr $ lies
in $Pr(X)$ we simply insert $\varphi \equiv 1$ into \eqref{eq:WeakErgodicAvePt}.

\noindent {\bf Step 3.  The invariance of $\brlMsr $.}  For the final step we show
that the $\brlMsr $ we have found in Step 2 is indeed invariant by
establishing \eqref{eq:InvarWeak}.  To this end fix any $t^{*} \geq 0$
and any $\varphi \in C_b(X)$.  We compute:
\begin{equation}\label{eq:proofMuInvariance}
  \begin{split}
    \int_{X} \varphi(S(t^{*}) x) \dr \brlMsr (x) &=
    \LIM \frac{1}{T} \int_{0}^{T}  \varphi(S(t^{*})S(t)x_{0}) \dr t =
    \LIM \frac{1}{T} \int_{0}^{T}  \varphi(S(t+t^{*})x)  \dr t\\
    &= \LIM \frac{1}{T} \int_{t^{*}}^{T+t^{*}} \varphi(S(t)x_{0})  \dr t\\
    &= \int_{X} \varphi( x) \dr \brlMsr (x) +
    \LIM \frac{1}{T} \int_{T}^{T+t^{*}} \varphi(S(t)x_{0}) \dr t -
    \LIM \frac{1}{T} \int_{0}^{t^{*}} \varphi(S(t)x_{0})  \dr t\\
    &= \int_{X} \varphi( x) \dr \brlMsr (x).
  \end{split}
\end{equation}
Note that the first equality is justified since $\psi(\cdot) = \varphi(S(t^{*})\cdot)
\in C_b(X)$.
For the final equality above we note that both
$\LIM \frac{1}{T} \int_{0}^{t^{*}} \varphi(S(t)x_{0})  d t$ and $\LIM \frac{1}{T} \int_{T}^{T+t^{*}} \varphi(S(t)x_{0}) \dr t$
are zero due to the boundedness of $\varphi$ and the basic properties of $\LIM$, Definition~\ref{def:GBL}, (b).
We have thus completed the proof of Theorem~\ref{thm:Prop1}.

\begin{Rmk}
    For the proof of Theorem~\ref{thm:Prop1},
    the fact that $\mbox{supp}(\brlMsr ) \subseteq \mathcal{A}$ follows directly from
    the application of the Kakutani-Riesz theorem
    on the global attractor $\mathcal{A}$.  The situation is
    different in e.g. \cite{FoiasManleyRosaTemam1} where Kakutani-Riesz theorem
    is applied on a compact \emph{absorbing set}.  Here one needs to rely on the
    general result, recalled in Lemma~\ref{thm:InvarOnAttr}
    and cf. \cite[Chapter 4, Theorem 4.1]{FoiasManleyRosaTemam1},
    that the support of an
    invariant measure is always contained in the global attractor $\mathcal{A}$.
    On the other hand we use Lemma~\ref{thm:InvarOnAttr} in Step 2 of the proof of
    Theorem~\ref{thm:Prop2} for the general case of an
    $\brlMsr _0 \in Pr(X)$ with an unbounded support.
\end{Rmk}

\subsection*{Proof of Theorem~\ref{thm:Prop2}:}
Let $\brlMsr _0 \in Pr(X)$ be given.  Since we are assuming here that $\varphi$ is bounded,
in contrast to the proof of Proposition~\ref{thm:Prop1} it is trivial to define:
\begin{equation}\label{eq:BLFnDef}
  \mathfrak{L}_{\brlMsr _{0}}(\varphi) = \LIM \frac{1}{T}\int_0^T \int_X \varphi(S(t) x) \dr \brlMsr _0(x) \dr t,
  \quad \textrm{ for }\varphi \in C_b(X).
\end{equation}
As in \cite{LukaszewiczRealRobinson2011}, we  now proceed to establish \eqref{eq:WeakErgodicAve} in two steps.
Initially we assume that $\brlMsr _0$ has a bounded support so that
this support is attracted to $\mathcal{A}$.  At the second step we
drop this assumption and pass to the general case of any $\brlMsr _0
\in Pr(X)$ with compactness arguments involving the Prokhorov
theorem and a suitable sequence of probability measures
$\brlMsr_0^n$ approximating $\brlMsr_0$.

\noindent {\bf Step 1: $\brlMsr _0$ with bounded support.} 
Assuming that $\rm{supp} (\brlMsr _0)$ is a bounded subset of $X$, we
fix any $\varphi, \tilde{\varphi} \in C_b(X)$ with  $\varphi(x) = \tilde{\varphi}(x)$ for all
$x \in \mathcal{A}$.  As in Step 2  of the proof of Proposition~\ref{thm:Prop1}
we would like to show
that $\mathfrak{L}_{\brlMsr _{0}}(\varphi) = \mathfrak{L}_{\brlMsr _{0}}(\tilde{\varphi})$
so that we may once again apply the Kakutani-Riesz theorem.
For this purpose fix $\epsilon > 0$.    Invoking, as above, Lemma~\ref{thm:BasicMetricSpaceBS},
(b) we choose a corresponding $\delta >0$ such that $\sup_{x \in \mathcal{A}_\delta}
|\varphi(x) - \tilde{\varphi}(x)| < \epsilon$.  Since $\mathcal{A}$ is attracting and $\mbox{supp}(\mathfrak{m}_0)$
is assumed to be bounded,
we may choose then choose a time $t^* > 0$ such that $S(t)(\rm{supp}(\brlMsr _0))\subset \mathcal{A}_\delta$
for all $t \geq t^*$.  Using basic properties of $\LIM$, \eqref{eq:LIMbasicinfsup}, we estimate
\begin{displaymath}
\begin{split}
  |\mathfrak{L}_{\brlMsr _{0}}(\varphi - \tilde{\varphi})| \leq&
         \limsup_{T \rightarrow \infty} \frac{1}{T}\int_0^{t^*} \int_X
         \left|\varphi(S(t) x) -  \tilde{\varphi}(S(t) x) \right| \dr \brlMsr _0(x) \dr t \\
     &+ \limsup_{T \rightarrow \infty} \frac{1}{T}\int_{t^*}^T \int\limits_{\rm{supp}(\brlMsr _0)}
     \left| \varphi(S(t) x) -  \tilde{\varphi}(S(t) x)  \right| \dr \brlMsr _0(x) \dr t \\
     \leq& \limsup_{T \rightarrow \infty} \frac{t^* \sup_{x \in X}( |\varphi(x)| + |\tilde{\varphi}(x)|)}{T} +
      \limsup_{T \rightarrow \infty} \frac{(T - t^*) \epsilon}{T} = \epsilon.
\end{split}
\end{displaymath}
Since $\epsilon$ was arbitrary to begin with we infer that
$\mathfrak{L}_{\brlMsr _{0}}(\varphi) =  \mathfrak{L}_{\brlMsr _{0}}(\tilde{\varphi})$
 for all $\varphi, \tilde{\varphi} \in C_b(X)$  with  $\varphi_{| \mathcal{A}} =\tilde{\varphi}_{| \mathcal{A}}$.

With this in hand we are able to define a functional $\mathfrak{G}$ as in \eqref{eq:GDefPt}
and obtain an associated $\brlMsr $
with $\brlMsr (\mathcal{A}^c) = 0$.\footnote{Note that, due to \eqref{eq:GoodExtDugndji}, the extension operator $\ell$ is chosen
to map $C(\mathcal{A})$ into $C_{b}(X)$.}  With this measure $\brlMsr $,
\eqref{eq:WeakErgodicAve} now follows exactly as in Step 1 of the proof of Proposition~\ref{thm:Prop1}.
To show that $\brlMsr  \in Pr(X)$, we take $\varphi \equiv 1$ in
\eqref{eq:WeakErgodicAve} and use that $\brlMsr _0 \in Pr(X)$.

\noindent {\bf Step 2: passage to the general case via the Prokhorov theorem.}
Now suppose that $\brlMsr _0 \in Pr(X)$ is arbitrary.   Since $\brlMsr _0$
is a Borel probability measure it is \emph{tight}; we may choose a
sequence of compact sets $K_n \subset X$ such that
 $ \brlMsr _0( K_n) \geq 1 - 1/n$ (see e.g. \cite{Billingsley1}).
Accordingly we now define a sequence of measures $\brlMsr _0^n \in Pr(X)$ via
\begin{displaymath}
    \brlMsr _0^n (E) = \frac{\brlMsr _0(E \cap K_n)}{\brlMsr _0(K_n)}.
\end{displaymath}
Clearly each $\brlMsr _0^n$ has a bounded support.  Invoking Step 1, we obtain, for each $n$,
a corresponding invariant
measure $\brlMsr ^n$ such that $\brlMsr _0^n$ and $\brlMsr ^n$ satisfy \eqref{eq:WeakErgodicAve}.
Note also that, for each $n$
$\mbox{supp} (\brlMsr ^n) \subseteq \mathcal{A}$.
Hence, since $\mathcal{A}$ is by definition a compact set,
by the Prokhorov theorem (see e.g. \cite{Billingsley1}),
we infer that $\{\brlMsr ^n\}_{n \geq 1}$ must have a weakly convergent subsequence.
Thinning  this sequence of $\brlMsr ^{n}$'s as necessary we find that, for each $\varphi \in C_b(X)$,
\begin{displaymath}
\begin{split}
       \int_{X} \varphi(x) \dr\brlMsr (x)  =&  \lim_{n \rightarrow \infty} \int_{X}  \varphi(x) \dr\brlMsr ^n(x)
       =  \lim_{n \rightarrow \infty} \frac{1}{\brlMsr _0(K_n)}\LIM \frac{1}{T}\int_0^T \int_{K_n} \varphi(S(t) x) \dr\brlMsr _0(x) \dr t\\
       =&  \lim_{n \rightarrow \infty} \frac{1}{\brlMsr _0(K_n)} \left(\LIM \frac{1}{T}\int_0^T \int_X \varphi(S(t) x) \dr\brlMsr _0(x) \dr t-
      \LIM \frac{1}{T}\int_0^T \int_{X \setminus K_n} \varphi(S(t) x) \dr\brlMsr _0(x) \dr t \right)\\
        =&  \LIM \frac{1}{T}\int_0^T \int_X \varphi(S(t) x) \dr\brlMsr _0(x) \dr t,\\
\end{split}
\end{displaymath}
which is \eqref{eq:WeakErgodicAve}.  Note that we may justify the last equality above by observing that
\begin{displaymath}
  \left| \frac{1}{T}\int_0^T \int_{X \setminus K_n} \varphi(S(t) x) \dr\brlMsr _0(x) \dr t  \right|
  \leq \sup_{x \in X} |\varphi(x)| \brlMsr _0(X \setminus K_n)  \leq \frac{\sup_{x \in X} |\varphi(x)|}{n}.
\end{displaymath}

We show that $\brlMsr  \in Pr(X)$ exactly as in the previous step by inserting $\varphi \equiv 1$
into \eqref{eq:WeakErgodicAve}.
The invariance of $\brlMsr $ (for the case of a  $\brlMsr _{0}$ with bounded
support or otherwise) follows from a computation like
\eqref{eq:proofMuInvariance}.
Having
shown that $\brlMsr $ is an invariant probability measure it follows that $\mbox{supp}(\brlMsr ) \subseteq \mathcal{A}$.
Indeed this is a general property of invariant measures that we restate and prove
for the sake of completeness in Lemma~\ref{thm:InvarOnAttr} below.
The proof of Theorem~\ref{thm:Prop2} is now complete.

\section{Application to evolution equations with memory}
\label{sec:Examples}

In this section we consider two concrete dynamical systems which are non-compact but which nevertheless
possess a global attractor.
The common theme between  of these examples is the presence of
\emph{memory terms} in the governing equations; we suppose that the evolution of the state
variables depends on both past and current states the of system.
As noted above in the introduction, each of these examples are not covered under the
previous results appearing in \cite{Wang2009}, \cite{LukaszewiczRealRobinson2011}.
In the first section we study a variation on the Navier-Stokes equations, often
referred to as the Jeffery's Model, which has diffusive memory terms.
Further on in Section~\ref{sec:NDDEs}, we consider a class
of neutral differential equations, i.e. differential equations involving time derivatives
of the state variable at lagged times.
Here, in addition to the fact that the semigroup
is non-compact, the underlying phase space $X$ is a non-reflexive Banach space.

\subsection{A viscoelastic fluids model:  The Navier-Stokes equations with memory}
\label{sec:NSEwMem}

For the first example we consider a variation on the Navier-Stokes
equations, the so called \emph{Jeffery's model}, which incorporates
the past history of the flow through additional diffusive,
integro-differential `memory' terms. See \eqref{eq:JeffMoment} --
\eqref{eq:Jeff1InitData} below for the precise formulation of this
model. Such equations arise in the study of certain viscoelastic
fluids; for example to model dilute solutions of polymers or bubbly
liquids.  See \cite{Joseph90, GattiGiorgiPata}.

Physically speaking, viscoelastic materials exhibit effects of both
elasticity and viscosity. For such materials, the stress is
typically a functional of the past history of the strain, instead of
being a function of the present strain value (elastic) or of the
present value of the time derivative of the strain (viscous). When
an integral term is used for the history dependence, the dynamical
equations become partial integrodifferential equations with a
character somewhere between hyperbolic (elastic) and parabolic
(viscous); see e.g. \cite{FrancfortSuquet1986,RHN87}. Thus, it is
not surprising that the Jeffery's model has interesting hyperbolic
properties in comparison to the classical Navier-Stokes
equations, as we will see below.

Numerous models coming from mathematical physics or mathematical
ecology incorporate such memory terms. For example one may add
memory to the damped or strongly damped wave equation for the study
of wave propagation in materials with memory, or to nonlinear
parabolic equations
 which result in ``reaction-diffusion systems'' that take non-Fickian diffusive effects into account.
 Such parabolic equations are used in the modeling of heat propagation in certain materials \cite{ColemanGurtin1, GurtinPipkin1}
or more recently have been advocated in the study of population dynamics
in situations where the individuals spread according to environmental feedbacks, see e.g. \cite{Pao97,CG06}.

In this  type of  ``reaction-diffusion system'' with non-Fickian diffusive effects,  the
contribution of the medium to the change in
mobile concentration is modeled by a linear density exchange
process which may be viewed as a source term with respect to the memory-less (i.e. Fickian) reaction-diffusion system. This
source term can be expressed as the convolution product of
a time-dependent function  $\kappa$, called the {\it memory kernel}, and some spatial variation of the mobile concentration $u$.
For instance this might lead to a {\it diffusive memory term} of the form $\int_{-\infty}^{t} \kappa(t-s) \Delta u(s)ds$. In any case,
 main feature of this
formulation is that the memory kernel depends only on the
properties of the medium, and is therefore an
intrinsic characteristic of the system. See \cite{GMBDC08} for other types of convolution products
between the time derivative of the mobile concentration and
the memory kernel as arising in non-Fickian dispersion in porous media and other `real-world' situations.

The mathematical study of hyperbolic and parabolic systems modified
by the kind of diffusive memory terms discussed above is now
extensive.  See e.g. \cite{Dafermos1,
GrasselliPata2,ChekrounDiPlinioGlattHoltzPata2010,PataZucchi1,
GattiMiranvillePataZelik2008,GrasselliPata1,GattiGrasselliMiranvillePata,
DiPlinioPata2,DiPlinioPata1,ContiPataSquassina,ChepyzhovPata,
ChepyzhovGattiGrasselliMiranvillePata,GiorgiGrasselliPata99,GrasselliPata2002,GrasselliPata}.
The Jeffery's model we will consider here was previously studied in
\cite{AgranovichSobolevskii98, Orlov99, GattiGiorgiPata}. Since we
are interested in the dynamical properties of this system, we will
treat   \eqref{eq:JeffMoment} - \eqref{eq:Jeff1InitData} as an
autonomous dynamical system within the extended phase space
formalism developed in \cite{Dafermos1} and used subsequently in
many of the works just cited. While the existence of an attractor
for \eqref{eq:JeffMoment} - \eqref{eq:Jeff1InitData} in this
extended phase space setting was studied in \cite{GattiGiorgiPata}
we will establish such results here under a larger class of
conditions on the memory decay kernel $\kappa$ than was possible in
\cite{GattiGiorgiPata}.  We are able to accomplish this feat by
making use of  an appropriate energy functional (see
Lemma~\ref{thm:DispFunctionalProperties} and also
\cite{ChekrounDiPlinioGlattHoltzPata2010,
GattiMiranvillePataZelik2008}). Note that while this more general
condition, \eqref{eq:memkerneldecayHard}, is much more challenging
technically, it allows for the interesting case of a memory term
$\kappa$ exhibiting a linear decay as we describe below in
Remark~\ref{rmk:algDecayForMu}.  Having established that
\eqref{eq:JeffMoment} - \eqref{eq:Jeff1InitData}  possesses a global
attractor within this framework the existence of invariant measures
naturally follow from Theorem \ref{thm:Prop1} and Theorem
\ref{thm:Prop2}. In any case, to the best of our knowledge, no one
has previously considered the existence of invariant measures for
the Jeffery's model, or for that matter any other parabolic or
hyperbolic systems with such diffusive memory terms.

Note that the time asymptotic dynamics of  \eqref{eq:JeffMoment}--\eqref{eq:JeffDivFree} can
be quantified in the cases of a time dependent or a stochastic, white noise type forcing.
This will be carried out elsewhere in \cite{ChekrounGlattHoltz2011a,ChekrounGlattHoltz2011b}.
Of course, we could formally consider a three dimensional version of  \eqref{eq:JeffMoment}--
\eqref{eq:JeffDivFree}. However, as with the classical three dimensional  Navier-Stokes equations,
the existence and uniqueness of strong solutions is unknown. The three dimensional system
is thus far from the reach of the theory we have developed here.

\subsubsection{The Jeffery's model: governing equations and basic assumptions}
\label{sec:NSEwithMemory}

We introduce the model as follows.
Fix a bounded open domain $\DO \subset \R^{2}$ with smooth boundary $\partial \DO$.
On $\DO$ we consider the following integro-differential equation
\begin{align}
  \pd{t} u +  u \cdot \nabla u   &=  \nu \Delta u + \int_{-\infty}^{t} \kappa(t-s) \Delta u(s)ds + \nabla p
                + f, \label{eq:JeffMoment}\\
                \nabla \cdot u &= 0. \label{eq:JeffDivFree}
\end{align}
Here $u = (u_{1},u_{2})$, $p$ represent the flow field and the pressure of a viscous incompressible
fluid filling $\DO$.  The function $\kappa$ determines the dependence on the past history of the flow through the integro-differential `memory' term
$\int_{-\infty}^{t} \kappa(t-s) \Delta u(s)ds$ and
distinguishes \eqref{eq:JeffMoment}--\eqref{eq:JeffDivFree} from the classical Navier-Stokes equations.
As such, $\kappa$ is assumed to
be positive and decreasing.   Further decay conditions for $\kappa$
are be imposed below in \eqref{eq:KappaForm}--\eqref{eq:NEC}.  We will assume throughout
what follows that the external body forcing
$f$ is time independent and that  $f \in L^{2}(\mathcal{D})$.

Of course \eqref{eq:JeffMoment}--\eqref{eq:JeffDivFree}  is supplemented with initial and boundary conditions.
We impose the no-slip (Dirichlet) boundary condition
\begin{equation}\label{eq:JeffDirechelBoundaryCond}
                u|_{\partial\DO} = 0.
\end{equation}
Observe that, in view of the history term $\int_{-\infty}^{t} \kappa(t-s) \Delta u(s)ds$,
the divergence-free vector field $u$ must be known
for all $t\leq 0$ in order to make sense of \eqref{eq:JeffMoment}.
Accordingly, the boundary-value problem
is supplemented with
the `initial condition', or more precisely the \emph{initial past history}:
\begin{equation}                     \label{eq:Jeff1InitData}
                u(t) = u_0(t), \; t \leq 0.
\end{equation}
Of course for each $t \leq 0$, $u_{0}(t)$ respects the conditions \eqref{eq:JeffDivFree}, \eqref{eq:JeffDirechelBoundaryCond}.

Concerning the memory terms in \eqref{eq:JeffMoment}, we assume that $\kappa$
may be written in the form:
\begin{equation}\label{eq:KappaForm}
\kappa(s) := \int_{s}^{\infty} \memK (\sigma)\, d \sigma :=\kappa_0-\int_0^s\memK(\sigma)\, d \sigma,
\end{equation}
for some nonnegative, nonincreasing
function $\memK \in L^{1}(\R^+)$, so that
$\kappa$ is nonincreasing and nonnegative.  We will sometimes refer to this $\memK$ as the {\it memory kernel}  as well, when no confusion is possible with $\kappa$.
Note that $\kappa(0) = \kappa_{0}$ is the `total mass of $\memK$'
i.e.
$\int_0^\infty \memK(s)\, d s=\kappa_0$ and that $\kappa' = -\memK$ for almost every $t \geq 0$.
Throughout the following, we impose the decay condition on $\memK$
\begin{equation}\label{eq:memkerneldecayHard}
  \memK(s + \sigma)  \leq K e^{-\delta \sigma} \memK(s), \; \textrm{ a.e. } s, \sigma
   \geq 0,
\end{equation}
for any fixed $K \geq 1$, $\delta > 0$ desired. This is equivalent to requiring that
\begin{equation}
\label{eq:NEC}
\kappa(s) \leq  \beta \memK(s), \quad \textrm{ a.e. } s \geq 0,
\end{equation}
where we may take $\beta = \frac{K}{\delta}$.\indeed{
Indeed suppose that \eqref{eq:memkerneldecayHard} holds for some $K \geq 1$,
$\delta > 0$.  Then for any $s \geq 0$
$$
   \kappa(s) = \int_{s}^{\infty} \memK(\sigma) d \sigma
                = \int_{0}^{\infty} \memK(\sigma + s) d \sigma
                \leq K \memK(s) \int_0^\infty e^{-\delta \sigma} d\sigma
                = \frac{K}{\delta} \memK(s).
$$
Now assume \eqref{eq:NEC}.  Given $\tau > 0$ we find, for almost every $s \geq 0$,
$$
  \beta \memK(s)
    \geq \int_{s}^{\infty}  \memK(\sigma) d \sigma
    \geq \int_{s}^{s + \tau}  \memK(\sigma) d \sigma
    \geq \tau \memK(s + \tau).
$$
Choose $\tau$ so that $\rho := \beta / \tau < 1$.   With this choice, we have, for all integers
$n$,
$\memK(s + n \tau) \leq \rho^n \memK(s)$.
Given any $\sigma > 0$ pick the positive integer
$n = n(\sigma)$ such that $\sigma = n \tau +r$ where $r \in [0,\tau)$.  With this choice
$$
    \memK(s + \sigma)
        \leq \rho^n \memK(s)
        = e^{n \log \rho} \memK(s)
        = e^{r \delta} e^{- \sigma \delta} \memK(s)
        \leq K e^{-\sigma \delta} \memK(s).
$$
where of course we took $\delta = -\frac{1}{\tau}\log \rho = - \frac{1}{\tau} \log (\beta/\tau)$ and $K = e^{\tau \delta}.$
}
As a byproduct of \eqref{eq:NEC},
$\kappa \in L^{1}(\R^+)$.  Using additionally that $-\kappa' = \memK$ we have
$$
\kappa(s) \leq \kappa_{0} e^{-s/\beta} = \kappa_0 e^{-(s \delta) /K} \textrm{ for every } s>0.
$$

\begin{Rmk}\label{rmk:algDecayForMu}
The condition \eqref{eq:memkerneldecayHard} is more general in comparison to many previous
works including \cite{GattiGiorgiPata}
where it was required that $K = 1$.  The `physical' significance of \eqref{eq:memkerneldecayHard}
is that it allows the treatment of a memory kernel $\memK$ that has `flat zones', i.e.  \eqref{eq:memkerneldecayHard}
allows for the consideration of a $\kappa$ with an {\it linear decay} profile.  Indeed, take
\begin{displaymath}
\memK(s) =
\begin{cases}
 \memK_{0} &\textrm{ for } s \leq t^{*},\\
  0 &\textrm{ for } s > t^{*},
\end{cases}
\end{displaymath}
where $t^{*}, \memK_{0} > 0$ are fixed constants.  Then, according
to \eqref{eq:KappaForm},
$\kappa_{0} = \memK_{0} t^{*}$ and
\begin{displaymath}
\kappa(s) =
\begin{cases}
  (1- s/t^{*}) \kappa_{0} &\textrm{ for } s \leq t^{*},\\
  0 &\textrm{ for } s > t^{*}.\\
\end{cases}
\end{displaymath}
Observe that \eqref{eq:memkerneldecayHard} is
satisfied by any $\delta >0$ by taking $K =  e^{\delta t^{*}} > 1$.  On the other hand \eqref{eq:memkerneldecayHard} is clearly
violated for $K =1$, regardless of the choice of $\delta > 0$.
\end{Rmk}

\subsubsection{The extended phase space and its functional setting}

In order to treat \eqref{eq:JeffMoment} - \eqref{eq:Jeff1InitData} as an autonomous dynamical system
we follow \cite{Dafermos1}
(and see above for extensive further references) and introduce an additional `memory' variable $\eta$,
which is defined according to
\begin{equation}\label{eq:pastHistory}
  \eta^t(s) := \int_0^s u(t -\sigma) \mbox{d}\sigma = \int_{t-s}^t u(\sigma) \mbox{d}\sigma, \;\mbox{for } t\geq
  0, \; s\geq 0.
\end{equation}
Arguing formally, we obtain from  \eqref{eq:JeffMoment} - \eqref{eq:Jeff1InitData}
the following coupled system of equations
\begin{subequations}\label{eq:Jeffext}
  \begin{align}
   \pd{t} u + u \cdot \nabla u &= \nu \Delta u
                 - \int_0^\infty \memK(s) \Delta\eta^t(s)ds + \nabla p
                 +  f
                     \label{eq:ExtMomentum}\\
                \pd{t}\eta^t(s) &= -\pd{s}\eta^t(s) + u(t),
                 \label{eq:HistEqn}\\
                                      \nabla \cdot u &= \nabla \cdot \eta = 0,
            \label{eq:ExtDivFreeCond}\\
                u(0) = u_{0}(0), \quad& \eta^{0}(s) = \int_{-s}^{0}u_{0}(\sigma)\mbox{d}\sigma,
                     \label{eq:JeffExtInitData}\\
                u_{|\partial \DO} = 0, \quad& \eta_{|\partial \DO}=0
                     \label{eq:DirechelBoundaryCond}
  \end{align}
\end{subequations}
We recall this derivation in detail
\cite{ChekrounGlattHoltz2011a}.

In order to place \eqref{eq:ExtMomentum}--\eqref{eq:DirechelBoundaryCond} in a rigorous functional
framework we next recall some classical spaces in the mathematical theory of the Navier-Stokes equations.
See e.g. \cite{Temam1} for further background.   Further spaces needed for the memory variable $\eta$
will be recalled further on below.

Take $\mathcal{U} := \{ \phi \in (C^\infty_0(\DO))^2: \nabla \cdot \phi = 0 \}$
and define
$H := cl_{L^2(\DO)} \mathcal{U}
    = \{ u \in L^2(\DO)^2: \nabla \cdot u = 0, u \cdot n = 0 \}$.
Here $n$ is the outer pointing normal to $\partial \DO$.  On
$H$ we take the $L^2$ inner product $(u,v) := \int_\DO u \cdot v d\DO$
 and associated norm $|u| := \sqrt{(u,u)}$.
The Leray-Hopf projector, $P_H$, is defined as the orthogonal
projection of $L^2(\DO)^2$ onto $H$.  At the next order let
$V := cl_{H^1(\DO)} \mathcal{U}
    = \{ u \in H^1_0(\DO)^2 : \nabla \cdot u = 0 \}$.
On this set we use the $H^1$ inner product $((u,v)) := \int_\DO \nabla u
\cdot \nabla v d\DO$ and norm $\|u\| := \sqrt{((u,u))}$.
Note that due to the Dirichlet boundary condition the Poincar\'{e}
inequality $|u| \leq \lambda_1^{-1} \|u \|$ holds for all $u \in V$.
Here, the constant $\lambda_1$ is the first eigenvalue of the stokes
operator $A$ defined in the next paragraph.
This justifies taking $\| \cdot \|$ as a norm for $V$.  We take $V'$ to
be the dual of $V$, relative to $H$ with the pairing notated by
$\langle \cdot, \cdot\rangle$.

We next define the Stokes operator $A$ which is understood as a
bounded linear map from $V$ to $V'$ via:
\begin{equation}\label{eq:defAVVprime}
  \langle Au, v\rangle = ((u,v)) \quad u,v \in V.
\end{equation}
$A$ can be extended to an unbounded operator from $H$ to $H$
according to $Au = - P_H \Delta u$ with the domain $D(A) =
H^2(\DO)^{2} \cap V$.  By applying the theory of symmetric,
compact, operators for $A^{-1}$, one can establish the existence of an
orthonormal basis $\{e_k\}_{k \geq 1}$ for $H$ of eigenfunctions of $A$. Here
the associated eigenvalues $\{\lambda_k\}_{k \geq 1}$ form an unbounded,
increasing sequence viz.
$0 < \lambda_1 < \lambda_2 \leq \ldots \leq \lambda_n \leq \lambda_{n+1} \leq \ldots$.
We shall also make use of the fractional powers of $A$.  For $u \in
H,$ we denote $u_k = (u,e_k)$.  Given $m \geq 0$ take
$D(A^{m/2}) = \left\{
    u \in H:  \sum_k \lambda_k^{m} |u_k|^2 < \infty \right\}$
and define
$A^{m/2} u  = \sum_k \lambda_k^{m/2} u_k e_k, \quad u \in D(A^m)$.
We equip $D(A^{m/2})$ with the norm
$|u|_m := |A^{m/2} u| = \left( \sum_k \lambda_k^{m} |u_k|^2 \right)^{1/2}$.
Note that $V = D(A^{1/2})$ and that for $u \in D(A^{1/2})$,  $\| u \| = | u|_{1}$.

The nonlinear portion and part of the pressure gradient in \eqref{eq:Jeffext} is captured
in the bilinear form:
\begin{equation}\label{eq:nonlinFnDef}
  B(u,v) := P_H (u \cdot \nabla) v  = \sum_{j = 1, 2} P_H (u_j \pd{j} v)
  \quad  u \in V,  \; v \in D(A).
\end{equation}
For notational convenience we will sometimes write $B(u) := B(u,u)$.
Note that $B$ is also well defined as a continuous map from $V \times V$ to
$V'$ according to $\langle B(u,v),w \rangle  := \int_{\DO} (u \cdot \nabla) v \cdot w
d\DO = \sum_{j, k= 1}^2 \int_{\DO} u_j \pd{j} v_k w_k d\DO$.
It is easy to show that
\begin{displaymath}
      \langle B(u, v), v \rangle = 0, \quad \textrm{ for all } u,v \in V
\end{displaymath}
Classically, with H\"older's inequality and Sobolev embeddings we have the estimate
    \begin{equation}\label{eq:BestWeak}
      \left| \langle B(u,v),w \rangle \right|
      \leq C |u|^{1/2}\|u\|^{1/2}\|v\| |w|^{1/2}\|w\|^{1/2}, \quad
      \textrm{ for all }  u,v,w \in V.
    \end{equation}
On the other hand, slightly different estimates along the same lines lead to,
\begin{equation}\label{eq:BestStrg}
      \left| (B(u,v),w) \right| \leq C
          |u|^{1/2}\|u\|^{1/2} \|v\|^{1/2}|Av|^{1/2}|w|,
          \quad
          \textrm{ for every } u \in V, v \in D(A), \textrm{ and } w \in H.
\end{equation}
See e.g. \cite{Temam1} for further details.

We next introduce the main
functional spaces and operators associated with the memory
terms in \eqref{eq:ExtMomentum}, \eqref{eq:HistEqn}.  For any $m \geq 0$
we define the Hilbert spaces
\begin{equation}\label{eq:memorySpace}
  \begin{split}
  \mathcal{M}_m := L^2(\mathbb{R}^+, \memK; D(A^{m/2}))
                 = \left\{\eta : \eta \textrm{ is $(\mathbb{R}^+, D(A^{m/2}))$ measurable},
                  \int_0^\infty \memK(s) |\eta(s)|^2_{m} ds < \infty \right\},  \\
  \end{split}
\end{equation}
which are equipped with the inner products,
\begin{equation}\label{eq:L2Memory}
  [\eta,\rho]_m := \int_0^\infty \memK(s) (\eta,\rho)_m \; \mbox{d} s =
  \int_0^\infty \memK(s)\int_{\mathcal{D}}A^{m/2}\eta \cdot
A^{m/2} \rho \; \mbox{d} x \; \mbox{d} s
\end{equation}
where $A$ denotes the Stokes
operator introduced above according to \eqref{eq:defAVVprime} along with its
corresponding fractional powers recalled there.
For notational convenience and in accordance with the classical
notations for the Navier-Stokes equations we shall frequently denote $[\cdot, \cdot] =
[\cdot,\cdot]_1$ and $[[\cdot, \cdot]] = [\cdot, \cdot]_2$, which are the appropriate
norms for `weak' and `strong' estimates respectively.
See Remark~\ref{rmk:DiffWithCompEmbedding}
below.

In view of term involving $-\pd{s}$ appearing in \eqref{eq:HistEqn}
we next take, again for any $m \geq 0$,
\begin{equation}\label{eq:TDspaces}
  \mathcal{N}_m := H^1_{0}(\mathbb{R}^+, \memK; D(A^{m/2})) =
   \left\{\eta \in \mathcal{M}_m: \pd{s}\eta \in \mathcal{M}_m, \eta(0) = 0  \right\}.
\end{equation}
On $\mathcal{M}_m$ we define an (unbounded) operator $T$ according to
\begin{equation}\label{eq:Toperator}
  T\eta = -\pd{s} \eta, \quad \eta \in \mathcal{N}_m (= \mbox{Dom}(T)).
\end{equation}
We now summarize some basic properties of $T$ used in the sequel.  See e.g. \cite{GrasselliPata2002,
ContiPataSquassina, ChekrounGlattHoltz2011a}
for further details.
\begin{Lem}\label{thm:memoryBasics}
Assume that the memory kernel $\memK \in L^{1}(\R^{+})$ is nonnegative, non-increasing and
satisfies \eqref{eq:memkerneldecayHard}. Fix $m \geq 0$ and define $T$ according to \eqref{eq:Toperator}.
Then:
\begin{itemize}
\item[(i)] For any for $\eta \in \mathcal{N}_m$.
    \begin{equation}\label{eq:memEstHardKer}
      [T \eta, \eta]_m \leq 0.
    \end{equation}
      \item[(ii)] Suppose that $\eta_0 \in \mathcal{M}_m$ and that $\xi \in L^1_{loc}([0,\infty); D(A^{m/2}))$.
  Then there exists a unique mild solution $\eta \in C([0,\infty); \mathcal{M}_m)$
  of the system:
  \begin{equation}\label{eq:LonelyMemory}
      \pd{t} \eta^t = T \eta^t + \xi(t), \quad \eta^{0} = \eta_{0}.
  \end{equation}
  Moreover $\eta$ has explicit representation (cf. \eqref{eq:pastHistory}):
  \begin{equation}\label{eq:repres_formula}
\eta^t(s)=
\begin{cases}
\int_{0}^s \xi (t-\sigma)d\sigma, & \textrm{ if } 0<s\leq t,\\
\eta_{0}(s-t) +\int_{0}^t \xi (\sigma) d\sigma & \textrm{ if } s>t ,
\end{cases}
\end{equation}
which is valid for any $t>0$.
  \end{itemize}
\end{Lem}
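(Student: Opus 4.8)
The plan is to handle the two items by realizing the operator $T = -\pd{s}$ with domain $\mathcal{N}_m \subset \mathcal{M}_m$ as the infinitesimal generator of the right-translation (extended by zero) semigroup on the weighted space $\mathcal{M}_m$, the boundary condition $\eta(0)=0$ built into $\mathcal{N}_m$ being precisely what makes $-\pd{s}$ dissipative there. Several of the underlying functional-analytic facts are standard for memory variables and can be quoted from the references cited just before the statement.

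For item (i), I would first observe that for $\eta \in \mathcal{N}_m$ the scalar function $s \mapsto |\eta(s)|_m^2$ is locally absolutely continuous with $\tfrac{d}{ds}|\eta(s)|_m^2 = 2(\pd{s}\eta(s),\eta(s))_m$, so that
\[
  [T\eta,\eta]_m = - \int_0^\infty \memK(s)(\pd{s}\eta(s),\eta(s))_m\, ds = -\tfrac12 \int_0^\infty \memK(s)\,\tfrac{d}{ds}|\eta(s)|_m^2\, ds .
\]
Since $|[T\eta,\eta]_m| \leq \|\pd{s}\eta\|_{\mathcal{M}_m}\|\eta\|_{\mathcal{M}_m}$ by Cauchy--Schwarz, the functional $\eta \mapsto [T\eta,\eta]_m$ is continuous on $\mathcal{N}_m$, and since the condition $[T\eta,\eta]_m \le 0$ is closed under limits it is enough to verify it on the dense subclass of $\eta$ that are smooth, $D(A^{m/2})$-valued, and compactly supported in $(0,\infty)$. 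For such $\eta$ an integration by parts in the Stieltjes sense gives $[T\eta,\eta]_m = \tfrac12 \int_0^\infty |\eta(s)|_m^2\, d\memK(s) \leq 0$, the boundary contributions vanishing by compact support (and by $\eta(0)=0$), and the sign being forced by $\memK$ nonincreasing, i.e. $d\memK$ a nonpositive measure. Passing to the limit establishes (i).

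For item (ii), I would identify $T$ with domain $\mathcal{N}_m$ as the generator of the $C_0$-semigroup of contractions on $\mathcal{M}_m$ given by
\[
  (e^{tT}\eta)(s) = \begin{cases} \eta(s-t), & s > t,\\ 0, & 0 < s \leq t. \end{cases}
\]
The contraction property is immediate from $\memK(r+t) \leq \memK(r)$, strong continuity follows from this bound together with $\memK \in L^1(\R^+)$ and a density argument, and that the generator is exactly $(T,\mathcal{N}_m)$ can be checked directly or via the Lumer--Phillips theorem, using the dissipativity from item (i) and the explicit solvability of $\lambda\eta + \pd{s}\eta = g$, $\eta(0)=0$, for the range condition. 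Note that $\memK \in L^1(\R^+)$ is exactly what guarantees that the constant-in-$s$ element with value $\xi(t) \in D(A^{m/2})$ lies in $\mathcal{M}_m$, so that $\xi$ is a legitimate $L^1_{loc}([0,\infty);\mathcal{M}_m)$ forcing; the existence, uniqueness and continuity ($\eta \in C([0,\infty);\mathcal{M}_m)$) of the mild solution of \eqref{eq:LonelyMemory} then follow from the standard variation-of-constants representation
\[
  \eta^t = e^{tT}\eta_0 + \int_0^t e^{(t-\sigma)T}\xi(\sigma)\, d\sigma .
\]
It remains to unwind this formula: the first term equals $\eta_0(s-t)$ for $s > t$ and vanishes for $s \leq t$, while the right-shift by $t-\sigma$ of the constant function with value $\xi(\sigma)$ equals $\xi(\sigma)\mathbf{1}_{\{\sigma > t-s\}}$, so the integral term equals $\int_0^t \xi(\sigma)\mathbf{1}_{\{\sigma > t-s\}}\, d\sigma$, which is $\int_0^t \xi(\sigma)\, d\sigma$ when $s \geq t$ and, when $0 < s \leq t$, reduces to $\int_{t-s}^t \xi(\sigma)\, d\sigma = \int_0^s \xi(t-\sigma)\, d\sigma$ after the substitution $\sigma \mapsto t-\sigma$; adding the two contributions reproduces \eqref{eq:repres_formula}.

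I expect the principal technical points to be the careful justification of the integration by parts in (i) when $\memK$ is only assumed nonnegative, nonincreasing and $L^1$ (so $\memK'$ must be read as a nonpositive measure and the behaviour of $\memK(s)|\eta(s)|_m^2$ at infinity must be handled, which is why the reduction to compactly supported $\eta$ is the clean route), and, in (ii), verifying that the right-translation semigroup is strongly continuous on the $\memK$-weighted space and that its generator domain coincides exactly with $\mathcal{N}_m$.
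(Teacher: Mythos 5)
The paper offers no proof of this lemma at all --- it merely points to the references cited immediately before the statement (Grasselli--Pata, Conti--Pata--Squassina) --- and your argument is precisely the standard one established there: realize $(T,\mathcal{N}_m)$ as the generator of the right-translation/extension-by-zero semigroup, which is contractive on $\mathcal{M}_m$ because $\memK$ is nonincreasing, read off (ii) from the variation-of-constants formula (your unwinding of the Duhamel integral into \eqref{eq:repres_formula} is correct), and obtain (i) by a Stieltjes integration by parts against the nonpositive measure $d\memK$. The only points needing care are exactly the ones you flag --- the density of smooth, compactly supported $D(A^{m/2})$-valued functions in $\mathcal{N}_m$ for a merely nonincreasing $L^{1}$ weight, and the identification of the generator's domain with $\mathcal{N}_m$ --- and for (i) one can even bypass the density step by integrating by parts directly on $[\epsilon,R]$ for $\eta\in\mathcal{N}_m$, killing the boundary terms via $\eta(0)=0$ and $\liminf_{s\to\infty}\memK(s)|\eta(s)|_m^{2}=0$, which holds since $\memK|\eta|_m^{2}\in L^{1}(\R^{+})$.
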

\begin{Rmk}\label{rmk:EasyLifeWithEasyKer}
In the case where $K =1$ in \eqref{eq:memkerneldecayHard} it is not hard to show that
$[T\eta, \eta] \leq - \delta/2$.  This significantly simplifies the estimates below in e.g. \eqref{eq:eeTot1},
\eqref{eq:AsymDecayS1}.
Indeed, it is only with the functional described
in Lemma~\ref{thm:DispFunctionalProperties} (see \cite{GattiMiranvillePataZelik2008})
that the needed dissipativity properties for the
memory variable $\eta$ can be achieved.
\end{Rmk}
We next define the product spaces which will serve as the phase
spaces associated with the system
\eqref{eq:Jeffext}.
For $m \geq 0$ let $\mathcal{H}_m := D(A^{m/2}) \times
\mathcal{M}_{m+1}$ which we endow with the product norm:
$\| (u,\eta) \|_m^2 := |u|_m^2 + [\eta]_{m+1}^2$.
We will sometimes write $x =
(u,\eta) \in \mathcal{H}_m$ and denote the underlying projection
operators by $P x = u$ and $Q x = \eta$.
In view of the above conventions, to simplify notations we will
we will use often $\mathcal{H} :=
\mathcal{H}_0$ and $\mathcal{V} := \mathcal{H}_1$ and let
\begin{equation}\label{eq:typicalTotalNorm}
  \|(u,\eta)\|_0^2 = |u|^2 + [\eta]^2, \; \mbox{and}\;   \|(u,\eta)\|_1^2 = \|u\|^2 +
  [[\eta]]^2.
\end{equation}
\begin{Rmk}\label{rmk:DiffWithCompEmbedding}
The additional degree of `regularity' in the memory
space is dictated by basic the structure of system \eqref{eq:Jeffext}.  See, for example, the
estimates \eqref{eq:me1}, \eqref{eq:eeS21} in the proof of Proposition~\ref{thm:JeffGlobalAttractor}
below.

For the each of the spaces $\mathcal{M}_m$ we do not have the
compact embedding of $\mathcal{M}_{m+1}$ into $\mathcal{M}_{m}$.
Thus, in contrast to $H$ and $V$,  $\mathcal{V} \not \Subset \mathcal{H}$.   Of course this
introduces additional complications for the proof of the existence
of a global attractor in the extended phase for \eqref{eq:Jeffext} and leads to the
introduction of still further spaces.  See Lemma~\ref{thm:cmptnessLem} below.
\end{Rmk}

With these definitions in hand we may now capture the memory term in
\eqref{eq:Jeffext} a bounded operator $M: \mathcal{M}_m \mapsto D(A^{m/2})$
defined, for $m \geq 0$ via,
\begin{equation}\label{eq:memKerTerm}
  M(\eta) := \int_0^\infty  \memK(s) \eta(s) ds, \quad \eta \in \mathcal{M}_m.
\end{equation}
This definition is justified since, cf. \eqref{eq:memkerneldecayHard},
$\left|\int_0^\infty \memK(s) \eta(s) \mbox{d}s  \right|_k
     \leq \int_0^\infty \memK(s)^{1/2} \memK(s)^{1/2} \left|\eta(s)   \right|_k \mbox{d}s
     \leq C_\memK [\eta]_k$.

With all of these preliminaries in hand, using \eqref{eq:defAVVprime},
\eqref{eq:nonlinFnDef}, \eqref{eq:Toperator}, \eqref{eq:LonelyMemory}
we now rewrite the
system \eqref{eq:Jeffext} in the abstract form
\begin{equation}\label{eq:JeffAbsForm}
  \begin{split}
    \pd{t} u + \nu A u + M( A \eta) + B(u)  &=  F\\
    \pd{t} \eta &= T \eta + u\\
           (u(0),\eta^0) &= (u_0, \eta_0)\\
  \end{split}
\end{equation}
where $F = P_{H} f$.
As in \cite{GattiGiorgiPata} we may now associate a
dynamical system $\{S(t)\}_{t \geq 0}$ with
\eqref{eq:JeffAbsForm} as follows:
\begin{Prop}\label{thm:DynSystemJeff}
  Suppose that $F \in V'$ and $x_{0} = (u_{0}, \eta_{0}) \in \mathcal{H}$.  Then there exists a
  unique $x = (u,\eta) \in C([0, \infty), \mathcal{H})$ with $x(0) = x_{0}$, such
  that $Px = u \in L^{2}_{loc}([0,\infty),V)$ and
  satisfying \eqref{eq:JeffAbsForm}
  for every $t > 0$ (in the appropriate weak sense).
By taking, for any $t \geq 0$, $S(t) x_{0} = x(t) = (u(t), \eta^t)$ we may thus define a continuous semigroup
$\{S(t)\}_{t \geq 0}$ on the phase space $X = \mathcal{H}$ (endowed with its usual topology).
\end{Prop}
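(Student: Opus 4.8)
This is the standard well-posedness statement for a semilinear parabolic equation coupled with a transport-type equation for the memory variable, cast in the Dafermos history-space framework. I would prove it by a Galerkin scheme combined with the explicit representation formula for the memory component from Lemma~\ref{thm:memoryBasics}, (ii). First I would fix $x_0 = (u_0,\eta_0) \in \mathcal{H}$ and build finite-dimensional approximations $u^n = \sum_{k=1}^n u^n_k(t) e_k$ using the eigenbasis of the Stokes operator $A$, projecting \eqref{eq:JeffAbsForm} onto $\mathrm{span}\{e_1,\dots,e_n\}$; for the memory variable I would \emph{not} discretize in $s$ but rather, at each step, recover $\eta^n$ from $u^n$ through the representation \eqref{eq:repres_formula} with $\xi = u^n$, which automatically satisfies $\pd{t}\eta^n = T\eta^n + u^n$ in the mild sense and keeps $\eta^n \in \mathcal{M}_1$. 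This reduces the Galerkin system to a finite system of ODEs for the $u^n_k$ with a memory (Volterra) nonlinearity, which has a unique local solution by Picard iteration.

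**A priori estimates and passage to the limit.** Next I would derive the basic energy estimate: pairing the $u$-equation with $u$ in $H$, using $\langle B(u,u),u\rangle = 0$ and \eqref{eq:defAVVprime}, and pairing the $\eta$-equation with $\eta$ in $\mathcal{M}_1$ using \eqref{eq:memEstHardKer} from Lemma~\ref{thm:memoryBasics}, (i), together with the bound on $M$ from \eqref{eq:memKerTerm}, gives, after a Gronwall argument,
\begin{equation*}
  \sup_{t \in [0,\mathcal{T}]}\|x^n(t)\|_0^2 + \nu\int_0^{\mathcal{T}} \|u^n\|^2 \, \dr t \leq C(\mathcal{T}, \|x_0\|_0, |F|_{V'}),
\end{equation*}
uniformly in $n$, which also rules out finite-time blow-up of the ODE system so that the approximate solutions are global. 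In particular $u^n$ is bounded in $L^\infty([0,\mathcal{T}];H) \cap L^2([0,\mathcal{T}];V)$ and, reading off \eqref{eq:JeffAbsForm} with the two-dimensional estimate \eqref{eq:BestWeak}, $\pd{t}u^n$ is bounded in $L^{4/3}([0,\mathcal{T}];V')$, while $\eta^n$ is bounded in $L^\infty([0,\mathcal{T}];\mathcal{M}_1)$. By Banach--Alaoglu and the Aubin--Lions lemma (using $V \Subset H \hookrightarrow V'$) I can extract a subsequence with $u^n \to u$ weakly-$*$ in $L^\infty([0,\mathcal{T}];H)$, weakly in $L^2([0,\mathcal{T}];V)$ and strongly in $L^2([0,\mathcal{T}];H)$; the strong convergence handles the nonlinear term $B(u^n)$ in the standard two-dimensional way, and $\eta^n \to \eta$ weakly-$*$ in $L^\infty([0,\mathcal{T}];\mathcal{M}_1)$, with $\eta$ identified as the mild solution of $\pd{t}\eta = T\eta + u$ with datum $\eta_0$ via \eqref{eq:repres_formula} and the strong convergence of $u^n$. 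This produces a solution $(u,\eta)$ on an arbitrary interval $[0,\mathcal{T}]$, hence on $[0,\infty)$; that $x \in C([0,\infty);\mathcal{H})$ follows from $u \in C([0,\infty);H_{\mathrm{weak}})$ upgraded to strong continuity by the energy equality (the $2$D Navier-Stokes argument, using that the right-hand side is regular enough), and $\eta \in C([0,\infty);\mathcal{M}_1)$ directly from Lemma~\ref{thm:memoryBasics}, (ii).

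**Uniqueness and the semigroup property.** For uniqueness I would take two solutions $x^{(1)}, x^{(2)}$ with the same datum, write $w = u^{(1)} - u^{(2)}$, $\zeta = \eta^{(1)} - \eta^{(2)}$, and estimate $\tfrac{1}{2}\tfrac{d}{dt}(|w|^2 + [\zeta]^2)$: the difference of the nonlinear terms is controlled by $|\langle B(u^{(1)},u^{(1)}) - B(u^{(2)},u^{(2)}), w\rangle| = |\langle B(w,u^{(2)}),w\rangle| \leq C|w|\|w\|\|u^{(2)}\|$ using \eqref{eq:BestWeak} and the antisymmetry, absorb $\tfrac{\nu}{2}\|w\|^2$ into the dissipation by Young, use \eqref{eq:memEstHardKer} and the bound on $M$ for the memory coupling, and close with Gronwall since $\|u^{(2)}\|^2 \in L^1_{loc}$. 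Uniqueness plus the fact that \eqref{eq:JeffAbsForm} is autonomous gives $S(t+s) = S(t)S(s)$ and $S(0) = \mathrm{Id}$ immediately; joint continuity of $(t,x_0)\mapsto S(t)x_0$ on $[0,\infty)\times\mathcal{H}$ follows by combining the continuity in $t$ with continuous dependence on the initial data, which is exactly the uniqueness estimate above read as a stability estimate (with a Gronwall constant depending on $\int_0^{\mathcal{T}}\|u^{(2)}\|^2$, itself bounded by the a priori estimate in terms of the data).

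**Main obstacle.** The genuinely delicate point is the treatment of the memory variable: because $\mathcal{M}_1$ carries only the weight $\memK$ (with no compact embedding, cf. Remark~\ref{rmk:DiffWithCompEmbedding}) one gets only weak compactness for $\eta^n$, so the limit equation for $\eta$ cannot be obtained by a naive passage to the limit in a weak formulation — this is precisely why I would route the memory component through the explicit representation formula \eqref{eq:repres_formula} at every stage (Galerkin, limit, uniqueness) rather than through energy identities alone, and why the dissipativity input \eqref{eq:memEstHardKer} (rather than a strict negativity, which fails when $K>1$, cf. Remark~\ref{rmk:EasyLifeWithEasyKer}) is the only property of $T$ one may safely use here. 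The remaining subtlety — strong continuity of $u$ into $H$ at $t=0$ and the energy equality — is handled exactly as for the $2$D Navier-Stokes equations since in two dimensions the solution is regular enough for the energy balance to hold as an identity, the extra forcing term $M(A\eta)$ being, thanks to \eqref{eq:memKerTerm}, no worse than an $L^2_{loc}([0,\infty);V')$ datum.
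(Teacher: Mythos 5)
Your proposal is correct and follows exactly the route the paper takes: the paper does not write out a proof of Proposition~\ref{thm:DynSystemJeff} but simply asserts that it ``may be established along classical lines with a Galerkin approximation,'' citing \cite{Temam1} and the history-space references \cite{GiorgiGrasselliPata99, GattiGrasselliPata2004}, and your argument is a faithful, complete implementation of that scheme (including the standard device of slaving $\eta^n$ to $u^n$ through the representation formula \eqref{eq:repres_formula} and using only \eqref{eq:memEstHardKer} for the memory dissipation). The only cosmetic remark is that in the basic energy estimate the coupling terms $[\eta,u]$ cancel exactly between the two equations (cf. \eqref{eq:ee1}--\eqref{eq:me1}), so no bound on $M$ is actually needed there, though invoking one does no harm for the $\mathcal{T}$-dependent bound you claim.
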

\noindent This Proposition may be established along classical lines with a Galerkin approximation.
See e.g. \cite{Temam1} and additionally \cite{GiorgiGrasselliPata99, GattiGrasselliPata2004} for further details on Galerkin
schemes in the history space context.

\subsubsection{The global attractor, invariant measures and related estimates}
\label{sec:GlobalAttractorJeff}
With this basic framework now in place we proceed to prove
that the dynamical system $\{S(t)\}_{t \geq 0}$ defined by
\eqref{eq:JeffAbsForm}, in the sense made precise by Proposition~\ref{thm:DynSystemJeff},
possesses a global attractor so that we may with no further efforts apply
the results in Section~\ref{sec:MainResults} to \eqref{eq:JeffAbsForm}.  More precisely
we prove:
\begin{Thm}\label{thm:JeffGlobalAttractor}
Suppose in addition to the conditions imposed in Proposition~\ref{thm:DynSystemJeff} that
$F \in H$.  Then the dynamical system $\{S(t)\}_{t \geq 0}$ defined on $\mathcal{H}$ by \eqref{eq:JeffAbsForm}
according to Proposition~\ref{thm:DynSystemJeff} has a connected global attractor $\mathcal{A}$
and hence the results in Theorems \ref{thm:Prop1}, \ref{thm:Prop2}
hold for $\{S(t)\}_{t \geq 0}$ with $(X,d) = (\mathcal{H}, \| \cdot \|)$.
\end{Thm}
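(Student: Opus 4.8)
The plan is to verify the two hypotheses of Proposition~\ref{thm:AttractorCriteria} for $\{S(t)\}_{t\geq 0}$ acting on $X=\mathcal{H}$. Once a (connected) global attractor $\mathcal{A}$ is produced this way, the conclusion about invariant measures is immediate: $\mathcal{H}$ is a separable Hilbert space, hence in particular a complete separable metric space, and $\{S(t)\}_{t\geq 0}$ is a continuous semigroup on it by Proposition~\ref{thm:DynSystemJeff}, so Theorems~\ref{thm:Prop1} and~\ref{thm:Prop2} apply verbatim with $(X,d)=(\mathcal{H},\|\cdot\|)$.

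The first task is hypothesis (i), the existence of a bounded absorbing set $\mathfrak{B}\subset\mathcal{H}$. Formally testing the momentum equation in \eqref{eq:JeffAbsForm} against $u$ in $H$ and the memory equation against $\eta$ in the $\mathcal{M}_1$ inner product, the nonlinearity drops out since $\langle B(u),u\rangle=0$ and the cross terms coming from $M(A\eta)$ and $u$ cancel by the conjugate structure of the two equations, leaving a differential inequality for $\|(u,\eta)\|_0^2=|u|^2+[\eta]^2$ with dissipation $\nu\|u\|^2$ from the Stokes term and $[T\eta,\eta]_1\le 0$ from Lemma~\ref{thm:memoryBasics}(i). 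As noted in Remark~\ref{rmk:EasyLifeWithEasyKer}, under the weak decay hypothesis \eqref{eq:memkerneldecayHard} with $K>1$ the bound $[T\eta,\eta]\le 0$ alone does not furnish dissipativity for $\eta$, so instead I would run this estimate for a modified energy functional $\Lambda$ — equivalent to $\|\cdot\|_0^2$ — obtained by adding a small multiple of an auxiliary memory functional in the spirit of \cite{GattiMiranvillePataZelik2008, ChekrounDiPlinioGlattHoltzPata2010}; it is precisely \eqref{eq:memkerneldecayHard} that makes $\Lambda$ dissipative, yielding $\tfrac{d}{dt}\Lambda + c\Lambda \le C(|F|)$ and hence an absorbing ball $\mathfrak{B}$.

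Next comes hypothesis (ii), the splitting $S(t)=S_1(t)+S_2(t)$. I would take $S_1(t)x_0$ to be the solution of the homogeneous linear part of \eqref{eq:JeffAbsForm} (no forcing, no nonlinear term): its $u$-component is exponentially damped by $\nu A$, and its $\eta$-component satisfies, by the transport representation \eqref{eq:repres_formula} together with the kernel inequality \eqref{eq:memkerneldecayHard}, a bound of the form $[\eta^t]^2\le Ke^{-\delta t}[\eta_0]^2$; combining these gives $\sup_{\|x_0\|\le K}\|S_1(t)x_0\|\to 0$ as $t\to\infty$, as required. Setting $S_2(t)=S(t)-S_1(t)$, one must then show $S_2(t)B$ is precompact in $\mathcal{H}$ for bounded $B$ and $t>0$. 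The $u$-component is controlled by a uniform ("strong") bound in $V\Subset H$. The $\eta$-component is the delicate point flagged in Remark~\ref{rmk:DiffWithCompEmbedding}: since $\mathcal{M}_{2}\not\Subset\mathcal{M}_1$, a bound in $\mathcal{M}_2$ does not suffice; instead I would derive uniform estimates on $[[\eta^t]]$, on $\partial_s\eta^t$ in the appropriate memory norm, and on the tails $\int_{(0,1/n)\cup(n,\infty)}\memK(s)\,|\eta^t(s)|_2^2\,ds$, and invoke a compactness criterion for such bounded-tailed, $H^1$-in-$s$ families in the weighted memory spaces (the role of the auxiliary spaces alluded to in Remark~\ref{rmk:DiffWithCompEmbedding}).

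With (i) and (ii) verified, Proposition~\ref{thm:AttractorCriteria} yields a connected global attractor $\mathcal{A}=\omega(\mathfrak{B})$, and Theorems~\ref{thm:Prop1},~\ref{thm:Prop2} follow for $\{S(t)\}_{t\ge0}$. The main obstacle I anticipate is exactly the asymptotic compactness of the memory variable: the absence of a compact embedding between the history spaces forces one to extract compactness from the fine structure of the transport operator $T$ via explicit tail control, and the weak decay hypothesis \eqref{eq:memkerneldecayHard} (with $K>1$) means the auxiliary-functional bookkeeping introduced for the absorbing-set estimate must be propagated consistently through all of the strong and tail estimates simultaneously.
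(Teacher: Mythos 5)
Your overall strategy --- verifying the two hypotheses of Proposition~\ref{thm:AttractorCriteria}, recovering dissipation in $\eta$ via an auxiliary memory functional in the spirit of \cite{GattiMiranvillePataZelik2008}, and extracting compactness of the history variable from tail and $\partial_s$ estimates rather than from a (nonexistent) compact embedding $\mathcal{M}_2\Subset\mathcal{M}_1$ --- is exactly the paper's. The absorbing-set step and the compactness mechanism (the space $\mathcal{E}$ with the tail functional $\mathfrak{T}_{\memK}$, compactly embedded in $\mathcal{M}_1$) are correctly identified.

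There is, however, a genuine gap in your choice of splitting. You take $S_1(t)$ to be the \emph{homogeneous linear} semigroup (no forcing, no nonlinear term) and set $S_2(t)=S(t)-S_1(t)$. Then the $u$-component of $S_2$ solves $\partial_t u_2+\nu Au_2+M(A\eta_2)=F-B(u,u)$, and the strong ($\mathcal{V}$-level) estimate needed for precompactness requires testing against $Au_2$ and controlling $(B(u,u),Au_2)$. Every available bound on $B(u,u)$ paired against $Au_2$ (e.g.\ \eqref{eq:BestStrg}) requires $|Au|<\infty$, i.e.\ $D(A)$-regularity of the \emph{full} solution $u$ --- and this is precisely what the Jeffery's model does not provide: the semigroup has no smoothing, and for data in $\mathcal{H}$ the velocity only lies in $L^\infty_{loc}(H)\cap L^2_{loc}(V)$. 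The paper avoids this by distributing the nonlinearity across both pieces, linearized along the full trajectory: $S_1$ solves \eqref{eq:split1} with the term $-B(u,u_1)$ (no forcing, full initial data) and $S_2$ solves \eqref{eq:split2} with $-B(u,u_2)$ (forcing $F$, zero initial data). Since $B(u,u_1)+B(u,u_2)=B(u,u)$, the pieces still sum to $S(t)$; the term $\langle B(u,u_1),u_1\rangle$ vanishes in the $S_1$ energy estimate, so the decay argument is unaffected; and in the $S_2$ strong estimate one gets $|(B(u,u_2),Au_2)|\le C|u|^{1/2}\|u\|^{1/2}\|u_2\|^{1/2}|Au_2|^{3/2}$, which Young's inequality absorbs into $\nu|Au_2|^2$ at the cost of a Gronwall factor $\exp\bigl(C_\nu\int_0^t|u|^2\|u\|^2\bigr)$, itself controlled by the basic energy estimate. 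Your splitting cannot be rescued by a cleverer estimate on $B(u,u)$; the decomposition itself must be changed. A secondary, minor point: your claimed decay $[\eta_1^t]^2\le Ke^{-\delta t}[\eta_0]^2$ from the representation formula \eqref{eq:repres_formula} ignores the source term $u_1$ in the transport equation for $\eta_1$, so the decay of the coupled linear system should instead be obtained from the same functional $\Gamma_1(\eta_1^t)+\Gadol(|u_1|^2+[\eta_1^t]^2)$ used for the absorbing set, as the paper does.
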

The rest of this section is devoted to the proof of Theorem~\ref{thm:JeffGlobalAttractor}.
\noindent The proof consists in verifying each of the two conditions imposed
by Proposition~\ref{thm:AttractorCriteria}.  With the sufficient condition supplied by this
Proposition in mind, we proceed
in two step.  At the first step we use energy type estimates to establish the existence of an
absorbing set $\mathfrak{B}$.  Note that, in view of \eqref{eq:memEstHardKer}, the needed
dissipation in variable $\eta$ does not follow directly from these standard estimates.
As such we employ an additional functional as dictated by Lemma~\ref{thm:DispFunctionalProperties}
introduced below.
For the second step we achieve the splitting suggested by Proposition~\ref{thm:AttractorCriteria}
by considering solution operators $S_{1}$ associated with no external forcing $F$ and $S_{2}$ with
zero initial data.  See \eqref{eq:split1}, \eqref{eq:split2} respectively.
Since, as mentioned in Remark~\ref{rmk:DiffWithCompEmbedding}, $\mathcal{V}$ is not compactly embedded
in $\mathcal{H}$, we are required to introduce still further spaces at achieve the compactness
desired for $S_{2}$.   This is the significance of the space $\mathcal{E}$ in \eqref{eq:compembSpaceNorm}
and Lemma~\ref{thm:cmptnessLem} immediately following.  Note that the formal estimates that follow
may be rigorously justified in the context of a suitable Galerkin approximation scheme for \eqref{eq:JeffAbsForm}.

{\bf Step 1: Energy estimates and dissipativity.}
By multiplying the first equation in \eqref{eq:JeffAbsForm}
with $u$ and making use use of the cancellation properties of $B$ we infer
\begin{equation}\label{eq:ee1}
    \frac{1}{2} \frac{d}{dt} |u|^2 + \nu \|u\|^2 + [ \eta, u ]
           = \langle F, u \rangle.
\end{equation}
We next multiply the second equation  in \eqref{eq:JeffAbsForm} by $A\eta$ and deduce:
\begin{equation}\label{eq:me1}
  \frac{1}{2} \frac{d}{dt} [\eta]^2 = [T\eta, \eta] + [\eta, u].
\end{equation}
Adding \eqref{eq:ee1} and \eqref{eq:me1}, using \eqref{eq:memEstHardKer} and
making an obvious application of Young's inequality we have
\begin{equation}\label{eq:eeTot1}
  \frac{d}{dt} (|u|^2 + [\eta]^2)
     + \nu \|u\|^2
           \leq C_\nu |F|_{V'}^2.
\end{equation}
This inequality does not evidence any dissipation
in the `memory variable' $\eta$.  We are therefore unable to directly
invoke the Poincar\'{e} inequality and
the Gronwall lemma in order to conclude the existence of a bounded
absorbing set as we would for e.g. the classical 2-D Navier-Stokes equations.
See however Remark~\ref{rmk:EasyLifeWithEasyKer} above.

To overcome this difficulty we recall the following Lemma (see
\cite{GattiMiranvillePataZelik2008} and also
\cite{ChekrounDiPlinioGlattHoltzPata2010, ChekrounGlattHoltz2011a})
which we will use at this step and later on in Step 2 when we
introduce  a splitting of $S(t)$ (cf. Proposition
\ref{thm:AttractorCriteria}, above).
\begin{Lem}\label{thm:DispFunctionalProperties}
Suppose that $\memK$ and $\kappa$ satisfy the conditions imposed in \eqref{eq:KappaForm}, \eqref{eq:memkerneldecayHard}.
Fix any $m \geq 0$ and assume that $\eta_0 \in \mathcal{M}_m$, $\xi \in L^1_{loc}([0,\infty); D(A^{m/2}))$
and that $\eta$ is the unique solution of \eqref{eq:LonelyMemory}
corresponding to $\xi$ and $\eta_{0}$ as guaranteed by
Lemma~\ref{thm:memoryBasics}, (ii).  Define, the
functional
    \begin{equation}\label{eq:fountainReachAround}
      \Gamma_m(t)= \Gamma_m(\eta^t)
      := \int_0^\infty \kappa(s) |
      \eta^{t}(s) |^2_m \mbox{d}s.
    \end{equation}
    Then $\Gamma_m(t)$ satisfies the following differential
    inequality,
    \begin{equation}\label{eq:fountainInequality}
      \frac{d\Gamma_m (t)}{dt}
        + \frac{1}{4\beta} (\Gamma_m (t) + \beta [\eta^t]^2_m) \leq
        2\beta^2 \|\memK\|_{L^1(\mathbb{R}^+)} | \xi(t) |^2_m,
    \end{equation}
    where $\beta$ is the constant arising in
    \eqref{eq:NEC}.
\end{Lem}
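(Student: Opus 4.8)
The plan is to establish \eqref{eq:fountainInequality} by differentiating the energy-type functional $\Gamma_m$ along the trajectory $\eta^t$ of \eqref{eq:LonelyMemory}, performing an integration by parts in the memory variable $s$ so as to exploit the relation $\kappa' = -\memK$ recorded in \eqref{eq:KappaForm}, and then closing the resulting differential inequality by invoking the structural assumption \eqref{eq:NEC}, $\kappa(s) \le \beta\memK(s)$, twice: once to absorb the forcing term and once to transfer part of the dissipation onto $\Gamma_m$ itself. Note first that $\Gamma_m(t)$ is finite and depends continuously on $t$, since $\Gamma_m(t) \le \beta[\eta^t]_m^2$ and $\eta \in C([0,\infty);\mathcal{M}_m)$ by Lemma~\ref{thm:memoryBasics}(ii).

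Treating all manipulations formally for the moment, I would compute
\[
  \frac{\mathrm{d}}{\mathrm{d}t}\Gamma_m(t)
  = 2\int_0^\infty \kappa(s)\,(\partial_t\eta^t(s),\eta^t(s))_m\,\mathrm{d}s
  = -2\int_0^\infty \kappa(s)\,(\partial_s\eta^t(s),\eta^t(s))_m\,\mathrm{d}s
    + 2\int_0^\infty \kappa(s)\,(\xi(t),\eta^t(s))_m\,\mathrm{d}s,
\]
using $\partial_t\eta^t = T\eta^t + \xi(t) = -\partial_s\eta^t + \xi(t)$. In the first integral I would write $2(\partial_s\eta^t(s),\eta^t(s))_m = \partial_s|\eta^t(s)|_m^2$ and integrate by parts in $s$: the boundary term at $s=0$ vanishes since $\eta^t(0)=0$ (either because $\eta^t \in \mathrm{Dom}(T) = \mathcal{N}_m$ for regular data, or directly from the representation formula \eqref{eq:repres_formula}), the boundary term at $s=\infty$ vanishes by the integrability built into $\mathcal{M}_m$ together with the decay of $\kappa$, and $\kappa' = -\memK$ turns what remains into $-\int_0^\infty\memK(s)|\eta^t(s)|_m^2\,\mathrm{d}s = -[\eta^t]_m^2$. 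This yields the identity $\frac{\mathrm{d}}{\mathrm{d}t}\Gamma_m(t) = -[\eta^t]_m^2 + 2\int_0^\infty \kappa(s)\,(\xi(t),\eta^t(s))_m\,\mathrm{d}s$.

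Next I would estimate the forcing term. Applying the Cauchy--Schwarz inequality in $D(A^{m/2})$ pointwise in $s$, and then once more in the weighted integral $\int_0^\infty \kappa(s)(\cdot)\,\mathrm{d}s$, gives $2\int_0^\infty\kappa(s)(\xi(t),\eta^t(s))_m\,\mathrm{d}s \le 2\,\Gamma_m(t)^{1/2}\bigl(|\xi(t)|_m^2\int_0^\infty\kappa(s)\,\mathrm{d}s\bigr)^{1/2}$, and by \eqref{eq:NEC} we have $\int_0^\infty\kappa(s)\,\mathrm{d}s \le \beta\|\memK\|_{L^1(\mathbb{R}^+)}$. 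A Young inequality with parameter $1/(2\beta)$ then produces $2\int_0^\infty\kappa(s)(\xi(t),\eta^t(s))_m\,\mathrm{d}s \le \tfrac{1}{2\beta}\Gamma_m(t) + 2\beta^2\|\memK\|_{L^1(\mathbb{R}^+)}|\xi(t)|_m^2$. Finally, observing that \eqref{eq:NEC} also gives $\Gamma_m(t) \le \beta[\eta^t]_m^2$, I would split the dissipative term as $[\eta^t]_m^2 = \tfrac14[\eta^t]_m^2 + \tfrac34[\eta^t]_m^2 \ge \tfrac14[\eta^t]_m^2 + \tfrac{3}{4\beta}\Gamma_m(t)$, and combine the three relations to obtain
\[
  \frac{\mathrm{d}}{\mathrm{d}t}\Gamma_m(t) + \tfrac14[\eta^t]_m^2 + \tfrac{1}{4\beta}\Gamma_m(t) \le 2\beta^2\|\memK\|_{L^1(\mathbb{R}^+)}|\xi(t)|_m^2,
\]
which is exactly \eqref{eq:fountainInequality} after rewriting $\tfrac14[\eta^t]_m^2 + \tfrac{1}{4\beta}\Gamma_m = \tfrac{1}{4\beta}(\Gamma_m + \beta[\eta^t]_m^2)$.

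The main obstacle is the rigorous justification of the differentiation under the integral sign and, above all, of the integration by parts in $s$: for general data $\eta_0 \in \mathcal{M}_m$ and $\xi \in L^1_{loc}([0,\infty);D(A^{m/2}))$ the mild solution $\eta^t$ need not belong to $\mathcal{N}_m$, and $\partial_s\eta^t$ need not lie in $\mathcal{M}_m$, so a priori neither $\eta^t(0)$ nor the boundary term at infinity is meaningful. I would handle this in the by-now standard way (as in the references cited around Lemma~\ref{thm:memoryBasics}): first prove \eqref{eq:fountainInequality} for regular data, namely $\eta_0 \in \mathcal{N}_m$ and $\xi$ smooth with compact support, where every step above is legitimate, then integrate in time and pass to the limit using the continuous dependence of $\eta^t$ on $(\eta_0,\xi)$ from Lemma~\ref{thm:memoryBasics}(ii). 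Alternatively, one may substitute the explicit representation \eqref{eq:repres_formula} directly into the definition \eqref{eq:fountainReachAround} of $\Gamma_m$, splitting the $s$-integral at $s=t$, and differentiate the resulting expression, which makes the boundary contributions entirely transparent.
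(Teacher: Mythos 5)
Your argument is correct and the constants work out exactly: differentiating $\Gamma_m$, integrating by parts in $s$ with $\kappa'=-\memK$ and $\eta^t(0)=0$ to produce the full dissipation $-[\eta^t]_m^2$, absorbing the forcing via Cauchy--Schwarz, $\int_0^\infty\kappa\,\mathrm{d}s\le\beta\|\memK\|_{L^1}$ and Young with parameter $1/(2\beta)$, and then converting three quarters of $[\eta^t]_m^2$ into $\tfrac{3}{4\beta}\Gamma_m$ via $\Gamma_m\le\beta[\eta^t]_m^2$ is precisely the standard proof. The paper itself does not prove this lemma but recalls it from the cited references, and your derivation (including the density/representation-formula justification of the integration by parts, where the nonpositive boundary term at $s=\infty$ may simply be discarded) coincides with the argument given there.
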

We now introduce, for each $\Gadol > 0$, the following functional,
\begin{equation}\label{eq:compinstatorFn}
  \Phi^{0}_{\Gadol}(t)
    := \Gamma_{1}(\eta^t) + \Gadol( |u(t)|^2 + [\eta^t]^2).
\end{equation}
Applying Lemma~\ref{thm:memoryBasics}
with $k = 1$ to the second equation in \eqref{eq:JeffAbsForm}
we find that
$$
 \frac{d}{dt} \Gamma_{1} + \frac{1}{4 \beta} (\Gamma_{1} + \beta [\eta]^{2} )
 \leq C_{\memK}\|u\|^{2},
 $$
where $C_{\memK} :=2\beta^2 \|\memK \|_{L^1(\mathbb{R}^+)}$, as arising
in \eqref{eq:fountainInequality}.  Combining this observation with \eqref{eq:eeTot1},
we estimate
\begin{equation}\label{eq:PhiGimelIneqGen}
 \frac{d}{dt}  \Phi^{0}_{\Gadol} + \frac{1}{4 \beta} (\Gamma_{1} + \beta [\eta]^{2} )
    \leq (C_{\memK} - \Gadol \nu)  \|u\|^2
           + \Gadol C_\nu |F|_{V'}^2.
\end{equation}
Take
\begin{equation}\label{eq:gimelandGammaCons}
\Gadol := \frac{1}{4 \lambda_1 \nu} + \frac{C_\memK}{\nu} =
\frac{1}{4 \lambda_1 \nu} + \frac{2\beta^2 \|\memK \|_{L^1(\mathbb{R}^+)}}{\nu},
\quad \gamma
:= \frac{1}{4 \max \left\{ \beta, \Gadol \right\}}
\end{equation}
where we recall that $\lambda_1$  the constant in the
Poincar\'e inequality and $\beta$ comes from \eqref{eq:NEC}.
With this choice of $\Gadol$ and associated $\gamma$, we observe that
\indeed{
    Indeed, we must consider two cases.  If $\beta \leq \Gadol$ then
    $$
    \frac{1}{4\beta} \Phi^{0}_{\beta} =
    \frac{1}{4\beta} \Gamma_1 + \frac{1}{4}( |u(t)|^2 + [\eta^t]^2)
    \geq \frac{1}{4 \Gadol} \Gamma_1 +\frac{1}{4} ( |u(t)|^2 + [\eta^t]^2)
    =   \frac{1}{4 \Gadol} \Phi^{0}_{\Gadol}.
    $$
    On the other hand if $\beta > \Gadol$
    $$
    \frac{1}{4\beta} \Phi^{0}_{\beta} \geq
    \frac{1}{4\beta} (\Gamma_1 + \Gadol ( |u(t)|^2 + [\eta^t]^2)
    =   \frac{1}{4 \beta} \Phi^{0}_{\Gadol}.
    $$
}
$$
    \frac{1}{4\beta} \Phi^{0}_{\beta} \geq \gamma \Phi^{0}_{\Gadol}.
$$
and so with \eqref{eq:PhiGimelIneqGen} and \eqref{eq:gimelandGammaCons}
\begin{displaymath}
   \frac{d}{dt} \Phi^{0}_{\Gadol} + \gamma \Phi^{0}_{\Gadol}
   \leq C_{\nu, \lambda_{1}, \memK} |F|^{2}_{V'},
\end{displaymath}
With the semigroup notation $x(t) = S(t) x_{0}$, $x_{0} = (u_{0}, \eta_{0})$
we therefore estimate using the Gr\"onwall  inequality that
\begin{equation}\label{eq:bndBallL2Calc}
   \|x(t)\|^{2}_{0} 
        \leq C_{\nu, \lambda_{1},\memK} \left(e^{-\gamma t} \Phi^{0}_\Gadol (0) + \int_{0}^{t} e^{-\gamma (t-s)} |F|_{V'}^{2}\right)
    \leq C_{\nu, \lambda_{1},\memK} (e^{-\gamma t}  \|x_{0}\|^{2}_{0} +   |F|_{V'}^{2}).
\end{equation}
Note that we have used \eqref{eq:NEC} with \eqref{eq:compinstatorFn} to infer the last inequality.
We may hence infer the existence of an absorbing ball $\mathfrak{B}_{0}$ in $\mathcal{H}$ whose
radius depend only on $|F|_{V'}$ and the universal $C =C_{\nu, \lambda_{1},\memK}$
in the final inequality above. More explicitly, \eqref{eq:bndBallL2Calc} shows that,
 given any $K > 0$,  there exists a time $t^*_1= t^*_1(K)$ such that
for $t > t^*_1$:
\begin{equation}\label{eq:absorbDef1}
  S(t) \{x_{0} \in \mathcal{H} : \|x_{0}\| \leq K \} \subset \mathfrak{B}_{0}.
\end{equation}
This is the first item required for Proposition~\ref{thm:AttractorCriteria}.  We turn to the second
step.

{\bf Step 2: The splitting property.}
We next exhibit a splitting of $S(t) = S_1(t) + S_2(t)$ taking the form
required by Proposition~\ref{thm:AttractorCriteria}.
Given arbitrary initial data $x_0 = (u_0,\eta_0) \in \mathcal{H}$ we
define $(u_1(t), \eta_1^t) := S_1(t)x_0$ to be the solution at time $t$ of:
\begin{equation}\label{eq:split1}
  \begin{split}
    \pd{t} u_1+ \nu A u_1 + M( A \eta_1)  &= - B(u, u_1), \\
    \pd{t} \eta_1 &= T \eta_1 + u_1,\\
           (u_1(0),\eta_1^0) &=  (u_0, \eta_0).\\
  \end{split}
\end{equation}
We take $(u_2(t), \eta_1^t) = S_2(t) x_0$ to be the solution of:
\begin{equation}\label{eq:split2}
  \begin{split}
    \pd{t} u_2 +\nu A u_2 + M( A \eta_2) &= F - B(u,u_2),\\
    \pd{t} \eta_2 &= T \eta_2 + u_2,\\
           (u_2(0),\eta_2^0) &= (0,0).\\
  \end{split}
\end{equation}
Note that the $u$ appearing in the nonlinear terms of both \eqref{eq:split1}, \eqref{eq:split2}
is the first component of the solution of
\eqref{eq:JeffAbsForm}; in other words $u= P S(t) x_0$. Clearly, given any $x_0 \in \mathcal{H}$, we have $S(t)x_0 = S_1(t)x_0 + S_2(t) x_0$.

The estimates for $S_{1}$, in view of Proposition~\ref{thm:AttractorCriteria}, (ii)
are carried in $\mathcal{H}$ and are very similar to those in Step 1 for $S$.    Indeed following
the arguments leading up to \eqref{eq:eeTot1} we obtain
\begin{equation}\label{eq:AsymDecayS1}
   \frac{d}{dt} (|u_{1}|^2 + [\eta_{1}]^2)
     + \nu \|u_{1}\|^2
                \leq 0.
\end{equation}
We now make a second application of Lemma~\ref{thm:DispFunctionalProperties}.
Defining, for $\Gadol > 0$, $\Phi^{1}_{\Gadol}(t) := \Gamma_{1}(\eta^t_1) + \Gadol( |u_1(t)|^2 + [\eta^t_1]^2)$,
arguing as above in \eqref{eq:PhiGimelIneqGen} and tuning $\Gadol$ and $\gamma$ as in \eqref{eq:gimelandGammaCons} we infer
\begin{equation}\label{eq:bndBallL2CalcS1}
   \|x_{1}(t)\|^{2}_{0} 
    \leq C_{\nu, \lambda_{1},\memK} (e^{-\gamma t}  \|x_{0}\|^{2}_{0}),
\end{equation}
where $x_{1}(t) = S_{1}(t)x_{0}$.
We infer that for every $K > 0$:
\begin{equation}\label{eq:decayDetCase}
  \sup_{\|x_0\|_0 \leq K}\|S_1(t)x_0\|_0 \xrightarrow{t \rightarrow \infty}  0.
\end{equation}
so that indeed $S_{1}$ plays the desired role required for Proposition~\ref{thm:AttractorCriteria}, (ii).

In remains to establish the requirement on $S_2$ given in Proposition~\ref{thm:AttractorCriteria}, (ii);
we must show that, for each $t > 0$, and each bound subset $B$ of $\mathcal{H}$ that $S_{2}(t)B$
is a precompact subset of  $\mathcal{H}$.
To this end we first show that $S(t)B$ is a bounded subset of $\mathcal{V}$
and then, owing to the complication that $\mathcal{V}$ is not compactly embedded in $\mathcal{H}$,
we must take further steps.  See Remark~\ref{rmk:DiffWithCompEmbedding} and
Lemma~\ref{thm:cmptnessLem} below.

In order to have an equation for $t \mapsto \|S_{2}(t)x_{0}\|_{1}$ we multiply the first equation of
\eqref{eq:split2} by $Au_2$ and the second equation by $A^2 \eta_2$.  With \eqref{eq:memEstHardKer}
one finds that
\begin{displaymath}
  \frac{d}{dt} (\|u_2\|^2 + [[\eta_2]]^2) + \nu|Au_2|^2 \leq 2 |(B(u,u_2),Au_2)| + C|F|^2.
\end{displaymath}
By applying standard estimates on the nonlinear terms, \eqref{eq:BestStrg}, we find:
\begin{displaymath}
  \begin{split}
  2|(B(u,u_2), Au_2)| \leq C |u|^{1/2}\|u\|^{1/2}\|u_2\|^{1/2} |Au_2|^{3/2}
                 \leq C_\nu |u|^2\|u\|^2\|u_2\|^2 + \nu|Au_2|^{2}.\\
  \end{split}
\end{displaymath}
Hence:
\begin{equation}\label{eq:eeS21}
  \begin{split}
    \frac{d}{dt} (\|u_2\|^2 + [[\eta_2]]^2)
   \leq  C_{\nu} (|u|^2\|u\|^2\|u_2\|^2 + |F|^2)
   \leq  C_{\nu} (|u|^2\|u\|^2(\|u_2\|^2 + [[\eta_2]]^2) + |F|^2).
  \end{split}
\end{equation}
Let $\Upsilon(t) = C_{\nu} \int_0^t |u|^2\|u\|^2$ where $C_{\nu}$ is precisely the constant
appearing in \eqref{eq:eeS21}.
Returning to \eqref{eq:eeTot1} we may observe that for any $x_{0} = (u_{0}, \eta_{0})$, we have
$\sup_{s \leq t} |u|^{2} + \nu \int_{0}^{t} \| u\|ds \leq  |u_{0}|^{2} +  t C_{\nu} |F|_{V'}$,
so that clearly $e^{\Upsilon(t)} < \infty$ for each $t > 0$.  Applying this functional to \eqref{eq:eeS21},
integrating in time appropriately, and noting that $S_{2}(0)x_{0} \equiv 0$ we find that
\begin{equation}\label{eq:mathcalVBndS2}
  \begin{split}
   \sup_{0 \leq r \leq t} \|S_{2}(t)x_{0} \|_{1} = \sup_{0 \leq r \leq t} (\|u_2(r)\|^2 + [[\eta_2^r]]^2)
      \leq C_{\nu} \int_0^t e^{\Upsilon(t) -\Upsilon(s)}|F|^2ds
      \leq C_{\nu, t} |F|^2.
  \end{split}
\end{equation}
This shows that, for every bounded set $B \subset \mathcal{H}$ and for every $t \geq 0$
that $S(t)B$ is a bounded subset of $\mathcal{V}$.    However, as described above
in Remark~\ref{rmk:DiffWithCompEmbedding}, $\mathcal{V}$ is not compactly embedded
in $\mathcal{H}$ and further steps are therefore required to established the desired compactness
of $S_{2}$.

To compensate for this difficulty we follow previous works (see e.g.
\cite{GattiGiorgiPata, ContiPataSquassina}) and
 introduce some additional spaces.
On $\mathcal{M}_1$ we define `tail functional'
\begin{equation}\label{eq:tailNormMemSpace}
\mathfrak{T}_{\memK}(\eta) = \sup_{\sigma \geq 1} \; \sigma \cdot \int_{(0,1/\sigma) \cup (\sigma, \infty)}
  \memK(s)\|\eta(s)\|^2\mbox{d}s,
\end{equation}
and consider the subspace
\begin{equation}\label{eq:compembSpaceNorm}
 \mathcal{E} = \{\eta \in \mathcal{N}_2: \mathfrak{T}_{\memK}(\eta) <
  \infty\} \subset \mathcal{N}_2,
\end{equation}
which we endow with the norm,
$| \eta |_{\mathcal{E}}^2 := [[ \eta ]]^2 + [T \eta]^2 +
  \mathfrak{T}_{\memK}(\eta)$.
Under these definitions $\mathcal{E}$ may be shown to be a closed subset
of $\mathcal{M}_{2}$ (relative to $| \cdot |_{\mathcal{E}}$) and hence is a Banach space; see \cite{PataZucchi1}.
We thus define the product space $\tilde{\mathcal{V}} = V \times \mathcal{E} \subset \mathcal{V}$
and endow $\tilde{\mathcal{V}}$ with the
norm,
\begin{equation}\label{eq:tildeEnorm}
  \| (u, \eta) \|_{\tilde{\mathcal{V}}}^2 := \| u \|^2 + | \eta |_{\mathcal{E}}^2
    = \| (u, \eta) \|_{1}^{2} +  [T \eta]^2 +
    \mathfrak{T}_{\memK}(\eta).
\end{equation}
We have the following compactness results and related estimates on the evolution equation
for the history variable.  See e.g.
\cite{GattiGiorgiPata, ContiPataSquassina, ChekrounGlattHoltz2011a}.
\begin{Lem}\label{thm:cmptnessLem}
  Assume that the memory kernel $\memK \in L^{1}(\R^{+})$ is nonnegative, non-increasing and
satisfies \eqref{eq:memkerneldecayHard}.
  \begin{itemize}
  \item[(i)] $\mathcal{E}$ is compactly embedded in $\mathcal{M}_1$
    and hence $\tilde{\mathcal{V}}$ is compactly embedded in $\mathcal{H}$.
  \item[(ii)] Suppose that $\eta_{0}  = 0$\footnote{With some modifications we may assume merely that
  $\eta_0 \in \mathcal{E}$.  Since this is unneeded for our current purposes we do not state the Lemma in this greater generality here.}, and that $\xi \in L^\infty_{\mbox{loc}}((0,\infty); V)$.  Let $\eta^t$ be the unique mild
  solution in $C([0,\infty), \mathcal{M}_1)$ of,
    \begin{equation}\label{eq:memorequation}
      \pd{t} \eta^t = T \eta^t + \xi(t), \quad \eta^{0} = 0,
    \end{equation}
        guaranteed by Lemma~\ref{thm:memoryBasics}, (iii).  Then
    there exists a universal positive constant $C = C_{\memK}$
    (depending on $\memK$ but independent of $t$ and the data)
    such that
    \begin{equation}\label{eq:tailnormBndsConcl}
    [T \eta^t]^{2} + \mathfrak{T}_{\memK}(\eta^t) \leq C \sup_{s \in [0,t]} \|\xi(s)\|^{2} < \infty.
    \end{equation}
  \end{itemize}
\end{Lem}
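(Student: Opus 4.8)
The plan is to establish the two items separately. Both are by now standard in the history-space literature (cf.\ \cite{PataZucchi1, GattiGiorgiPata, ContiPataSquassina}); I would nonetheless give the arguments here because the decay hypothesis \eqref{eq:memkerneldecayHard} we work under is weaker than in several of those references.

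For item (i) the goal is to show that any sequence $\{\eta_n\}$ with $R := \sup_n |\eta_n|_{\mathcal{E}}^2 < \infty$ has a subsequence converging in $\mathcal{M}_1$; the compact embedding $\tilde{\mathcal{V}} = V\times\mathcal{E}\Subset\mathcal{H} = H\times\mathcal{M}_1$ then follows immediately by pairing this with the classical $V\Subset H$. I would decompose $\R^+$ into a ``core'' $[1/\sigma,\sigma]$ and a ``tail'' $(0,1/\sigma)\cup(\sigma,\infty)$. The definition of $\mathfrak{T}_{\memK}$ gives $\int_{(0,1/\sigma)\cup(\sigma,\infty)}\memK(s)\|\eta_n(s)\|^2\,ds \le R/\sigma$ for every $\sigma\ge 1$ and every $n$, which provides uniform smallness of the $\mathcal{M}_1$-mass on the tail. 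On the core, observing that the region where $\memK$ vanishes contributes nothing to the $\mathcal{M}_1$-norm, it suffices to work on compact subintervals of the open support of $\memK$, where $\memK$ (being non-increasing and positive there) is bounded above and below; on such an interval $I$ the quantities contained in $|\eta_n|_{\mathcal{E}}$ give $\{\eta_n\}$ bounded in $L^2(I;D(A))$ and $\{\partial_s\eta_n\}$ bounded in $L^2(I;V)$, so the Aubin--Lions lemma, together with the compactness $D(A)\Subset D(A^{1/2})=V$ coming from the compact resolvent of the Stokes operator, yields precompactness of $\{\eta_n\}$ in $L^2(I;V)$. A diagonal extraction over $\sigma=1,2,\dots$ then produces a subsequence that is Cauchy on every core and uniformly small on every tail, hence Cauchy in $\mathcal{M}_1$.

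For item (ii) the main tool is the explicit representation formula of Lemma~\ref{thm:memoryBasics}, (ii): with $\eta_0=0$, \eqref{eq:repres_formula} reads $\eta^t(s)=\int_0^{\min(s,t)}\xi(t-r)\,dr$, whence $\partial_s\eta^t(s)=\xi(t-s)$ for $0<s<t$, $\partial_s\eta^t(s)=0$ for $s>t$, and, writing $M_t:=\sup_{s\in[0,t]}\|\xi(s)\|$, the pointwise bound $\|\eta^t(s)\|\le\min(s,t)\,M_t$. From this $[T\eta^t]^2=\int_0^t\memK(s)\|\xi(t-s)\|^2\,ds\le M_t^2\|\memK\|_{L^1(\R^+)}$ at once. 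For $\mathfrak{T}_{\memK}(\eta^t)$ I would bound the near-zero piece by $\sigma\int_0^{1/\sigma}\memK(s)\,s^2M_t^2\,ds\le\sigma^{-1}M_t^2\|\memK\|_{L^1(\R^+)}$, and the far piece by splitting according to whether $s<t$ or $s\ge t$: for $\sigma<s<t$ use $\|\eta^t(s)\|\le sM_t$ and $s^2\le s^3/\sigma$, while for $s>\max(\sigma,t)$ use $\|\eta^t(s)\|\le tM_t$; in both cases the super-polynomial decay of $\memK$ and $\kappa$ implied by \eqref{eq:NEC} (recall $\kappa(s)\le\kappa_0e^{-s/\beta}$, and $\memK(s)\le 2s^{-1}\kappa(s/2)$ since $\memK$ is non-increasing, so that $\int_0^\infty s^3\memK(s)\,ds<\infty$ and $\sup_{s>0}s^3e^{-s/\beta}<\infty$) lets one absorb the powers of $s$, $\sigma$ and $t$ and conclude that every term is bounded by $C_{\memK}M_t^2$, uniformly in $\sigma\ge 1$ and $t>0$. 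This is \eqref{eq:tailnormBndsConcl}.

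I expect the main obstacle to lie in item (ii), namely in securing the claimed uniformity of $C_{\memK}$ in $t$: the only available bound on $\|\eta^t(s)\|$ for large $s$ is the crude $tM_t$, which after squaring reintroduces a factor $t^2$ that must be swallowed by the tail $\kappa(t)$ or $\kappa(\sigma)$ --- so this is exactly where the exponential-type decay encoded in \eqref{eq:memkerneldecayHard}/\eqref{eq:NEC} is genuinely needed. The only delicate point in item (i) is the possible degeneracy of the weight $\memK$ when it has ``flat zones'' and then vanishes, as in the linear-decay example of Remark~\ref{rmk:algDecayForMu}; this is handled, as indicated above, by noting that such a region is invisible to the $\mathcal{M}_1$-norm.
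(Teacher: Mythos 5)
Your item (ii) is correct and complete, and it is worth noting that the paper itself does not prove this lemma but defers to \cite{GattiGiorgiPata, ContiPataSquassina, ChekrounGlattHoltz2011a} (and to \cite{PataZucchi1} for the closedness of $\mathcal{E}$), so your write-up supplies an argument the paper omits. From \eqref{eq:repres_formula} with $\eta_0=0$ one indeed gets $\partial_s\eta^t(s)=\xi(t-s)$ for $0<s<t$ and $\partial_s\eta^t(s)=0$ for $s>t$, together with $\|\eta^t(s)\|\le \min(s,t)\,M_t$; your three-way estimate of $\mathfrak{T}_{\memK}(\eta^t)$ then goes through, and the inequalities $\memK(s)\le 2s^{-1}\kappa(s/2)$ and $\kappa(s)\le\kappa_0 e^{-s/\beta}$ do give $\int_0^\infty s^3\memK(s)\,ds<\infty$ and $\sup_{u>0}u^3e^{-u/\beta}<\infty$, so the constant is genuinely uniform in $t$ and in $\sigma\ge 1$. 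You also correctly isolate where \eqref{eq:NEC} is truly needed, namely to absorb the factor $t^2$ in the regime $s>\max(\sigma,t)$.

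Item (i) follows the standard route (tail control from $\mathfrak{T}_{\memK}$, Aubin--Lions on the core, diagonal extraction), but there is a gap exactly at the point you try to dismiss. The hypotheses allow $\memK$ to decay \emph{continuously} to zero at a finite point $s^*$: for instance $\memK(s)=(s^*-s)_+$ satisfies \eqref{eq:memkerneldecayHard} with $K=1$, $\delta=1/s^*$, since $\memK(s+\sigma)/\memK(s)\le(1-\sigma/s^*)_+\le e^{-\sigma/s^*}$. In that case the region where $\memK$ is positive but not bounded below is \emph{not} invisible to the $\mathcal{M}_1$-norm, and your Aubin--Lions argument on compact subintervals of the open support leaves the mass $\int_{s^*-\epsilon}^{s^*}\memK(s)\|\eta_n(s)\|^2\,ds$ uncontrolled: the only bound inherited from $|\eta_n|_{\mathcal{E}}^2\le R$ is $\lambda_1^{-1}[[\eta_n]]^2\le\lambda_1^{-1}R$, which does not shrink with $\epsilon$, so the diagonal subsequence is not shown to be Cauchy in $\mathcal{M}_1$. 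The fix is short and uses structure you did not exploit, namely $\eta_n(0)=0$ and the monotonicity of $\memK$: by Cauchy--Schwarz,
\begin{equation*}
\|\eta_n(s)\|^2\le\Bigl(\int_0^s\memK(r)^{-1}\,dr\Bigr)\int_0^s\memK(r)\|\partial_r\eta_n(r)\|^2\,dr
\le\frac{s}{\memK(s)}\,[T\eta_n]^2,
\end{equation*}
so that $\memK(s)\|\eta_n(s)\|^2\le sR$ pointwise on the support and hence $\int_{s^*-\epsilon}^{s^*}\memK\|\eta_n\|^2\le Rs^*\epsilon$ uniformly in $n$. With this supplement your decomposition closes; alternatively one may first discard high modes via $[(I-P_N)\eta_n]^2\le\lambda_{N+1}^{-1}[[\eta_n]]^2$, but the degenerate endpoint still requires an estimate of the above Hardy type.
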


With Lemma~\ref{thm:cmptnessLem} in hand and having already established
\eqref{eq:mathcalVBndS2} we finally proceed to show that $S_{2}$ is compact.
Fix any $x_{0} = (u_{0}, \eta_{0}) \in \mathcal{H}$.
Observe that, due to \eqref{eq:mathcalVBndS2} we may infer that
\begin{equation}\label{eq:TailNormApp1}
\sup_{  0\leq r \leq t} \|P S_2(t)x_{0}\|^{2}  \leq \sup_{  0\leq r \leq t} \| S_2(t)x_{0}\|_{1}^{2} := J(t)< \infty
\end{equation}
where we note that $J(t)$ is independent of $x_{0}$.   We now apply
Lemma~\ref{thm:cmptnessLem}, (ii) with $\xi = P S_2(t)x_{0}$ in \eqref{eq:memorequation}.
From \eqref{eq:split2}, \eqref{eq:tailnormBndsConcl} and \eqref{eq:TailNormApp1} we infer
\begin{equation}\label{eq:TailNormApp2}
[T Q S_{2}(t) x_{0}]^{2} + \mathfrak{T}_{\memK}( Q S_{2}(t) x_{0}) < C J(t)
\end{equation}
where $C$ is the constant appearing in \eqref{eq:tailnormBndsConcl} and is
independent of $x_{0}$.  Combining \eqref{eq:TailNormApp1}, \eqref{eq:TailNormApp2}
and cf. \eqref{eq:tildeEnorm}
$$
   \sup_{ \|x_0\|_0 \leq K} \|S_2(t) x_0 \|_{\tilde{\mathcal{V}}}^2 \leq C J(t) < \infty,
$$
for any $K > 0$.
In other words we infer that for every bounded set $B \subset \mathcal{H}$ and for every $t \geq 0$
that $S_{2}(t)B$ is a bounded subset of $\tilde{\mathcal{V}} \subset \mathcal{V}$ and so conclude
from Lemma~\ref{thm:cmptnessLem}, (i) that $S_{2}$ in compact.
The second requirement for Proposition~\ref{thm:AttractorCriteria}
has therefore been fulfilled and the proof of Proposition~\ref{thm:JeffGlobalAttractor} is finally complete.

\subsection{Neutral delay differential equations}
\label{sec:NDDEs}

A retarded functional differential equation (RFDE) describes a system where the rate of change
of its state is determined by the present and the past states of the system. If, additionally, the rate of change of
the state depends on the rate of change of the state in the past,
the system is called a neutral functional differential
equation (NFDE). When only discrete values of the past have influence on the present rate of
change of the system's state, the corresponding mathematical model is either a delay differential equation (DDE)
or a neutral differential equation (NDDE). The theory of RFDEs and NFDEs is both of theoretical and practical
interest, as these types of models provide a powerful framework
used in the study of many phenomena in the applied sciences;
for example in physics,
biology, economics, and control theory to name a few.

The initial development of neutral functional differential equations
has been intimately related to some particular transmission line
problems modeled by hyperbolic PDEs such as the telegrapher's
equations. In the seminal works \cite{Abolina_Myskis60},
\cite{Brayton_Miranker64}, it was observed that certain linear
hyperbolic PDEs with {\it (non)linear boundary conditions} were
equivalent to an equation of the form $
\dot{x}_t-\mathfrak{L}\dot{x_t}=f(x_t)$ where $\mathfrak{L}$ is an
operator which does not depend upon the values of $x(t)$ but only on
values in the past. In other words the evolution equation of the
system may be characterized by the simultaneous presence of delayed
and non-delayed derivatives.  We will explore this relationship in
further detail for the specific case of the telegrapher's equation,
with simplified, linear, boundary conditions \eqref{Eq_BC2}, below in
subsection~\ref{Sec_NDDE_HyperPDEs}.

The linear boundary conditions \eqref{Eq_BC2} have been chosen here
merely to give a simple illustration of how NDDEs may arise in
problems modeled by hyperbolic PDEs. More complex, physically
realistic boundary conditions, such as nonlinear or dynamic boundary
conditions, lead to many other (nonlinear) NDDEs. See e.g.
\cite{HaleLunel93, Blakely_Corron04, Barton_al07}. In
\cite{Blakely_Corron04} for example, a nonlinear NDDE model derived from the
telegrapher's equation, but with nonlinear boundary conditions, was
employed to model an actual electronic device.    Here the delay may
be seen to arise partially from the signal passage time through the
transmission line itself.  The presence of rich, chaotic
delay-induced dynamics, was  observed experimentally and shown
to be in good agreement with the numerical simulation of their
nonlinear NDDE model. More precisely  it was shown that, in the
appropriate regime, the attractor determined by this NDDE presented
a similar coarse-grained structure in comparison to the attractor
reconstructed from the empirical observation of the
electronic device they were studying. This particular NDDE has been
further analyzed in \cite{Barton_al07}. Here it was shown that
homoclinic bifurcations were at the origin of the complicated
dynamics observed in \cite{Blakely_Corron04}. 

Of course, to
determine if such complicated dynamics may be associated with the
existence of a ``complicated'' invariant probability measure is out
of reach due to the infinite dimensional  character of the phase
space $X = C([-\tau,0], \R^{n})$.\footnote{This question seems intractable 
since there is no equivalent notion of the Lebesgue measure to employ as a universal
reference measure $\mathfrak{m}_0$.  We thus have no means to
declare that a measure $\mathfrak{m}$ to be ``complicated'' in terms
of a lack of absolute continuity with respect to a reference
measure.}
Indeed, in view of the rich
chaotic dynamics observed in \cite{Blakely_Corron04, Barton_al07}, even the
question of the \emph{existence} of an invariant probability measure
for such systems seems to be nontrivial and, to the best of the
authors' knowledge, this question seems to have never been
previously addressed in the literature.

Theorem~\ref{Thm_existence_attractor} below yields a general,
abstract result concerning the existence of invariant probability
measures of NFDEs and furthermore associates these measures with
certain temporal averages. For each initial Borel probability
measure $\brlMsr_0$, we obtain an invariant measure $\mathfrak{m}$
as a generalized Banach limit of $\brlMsr_0$ through the time
average of the semigroup generated by the NDDE. Note that, by our
construction, the support of any  invariant measures obtained by
Theorem~\ref{Thm_existence_attractor} (see also Theorem
\ref{THM_IM_NDDE}) is furthermore contained in $\mathcal{A}$.  This
does not preclude the possibility that the invariant measure
$\brlMsr$ could be supported by the whole of the global attractor
with its complex geometry. Our results apply to any NDDE possessing
a global attractor $\mathcal{A}$; cf.
Theorem~\ref{thm:Prop1},~\ref{thm:Prop2} and
Definition~\ref{def:GBL} above. Note that, in this case, such a
semigroup has no smoothing effects in finite time (i.e. the
semigroup does not have a compact absorbing set) and acts on
$X=C([-\tau,0]; \mathbb{R}^{n})$ which is a non-reflexive Banach
space.  As such, the results in \cite{LukaszewiczRealRobinson2011}
do not apply for such equations.  To provide concrete
situations where NDDEs possess a global attractor,
Theorem~\ref{THM_IM_NDDE} provides a novel sufficient and
``checkable" condition which yields a broad class of interesting
examples.

The rest of this section is organized as follows.  We begin by
further detailing the relationship between NDDEs and hyperbolic PDEs
with an example involving the telegrapher's equation (with simplified
boundary conditions). In subsection~\ref{Sec_prelimNDDE} we briefly
recall some preliminaries from the general theory of NDDEs 
which allow us to establish the main abstract result,
Theorem~\ref{Thm_existence_attractor}, which establishes the existence of
invariant probability measures for a broad class of NFDEs. In
subsection~\ref{Sec_resultsNDDE} we formulate and prove a
result of a more practical nature, Theorem~\ref{THM_IM_NDDE}, which
gives concrete sufficient conditions for the 
results in Theorem~\ref{Thm_existence_attractor}. 
The final subsection applies Theorem~\ref{THM_IM_NDDE} to
some examples related to the classical NDDE model proposed in
\cite{Brayton_Miranker64}.

\subsubsection{NDDEs and hyperbolic PDEs}
\label{Sec_NDDE_HyperPDEs}
We can get some idea of how NDDEs (and also difference equations
with retarded arguments may) be derived from  certain classes of
hyperbolic PDEs by considering an example involving the
telegrapher's equation with particular linear boundary conditions in
some details. We shall see that the type of retarded system derived
depends on the boundary conditions. For this purpose, we recall the
classical problem considered in \cite{Brayton_Miranker64} of the
determination of the current $I$ and voltage $V$ in a transmission
line terminated at each end by linear resistors or nonlinear circuit
elements such as diodes. Note  that we adopt here a different
approach than the one classically presented in \cite{HaleLunel93}.
The reasons of this, which are related to stability and regularity
issues, will be clarified in the course of the forthcoming
discussions.  In particular, the exposition adopted here allows
us to illustrate an important relationship between
difference operators and NDDEs.  As we shall see,
cf. Lemma \ref{Lem_decomp_NDDE} below,
the structure of a certain difference operator associated with
the type of NDDEs (or NFDEs) we consider here 
determines the non-compact nature of the 
associated semigroup.

Let us take the $x-$axis in the direction of the line, with ends at $x=0$ and $x=1.$
It is assumed that the voltage $V$ and current $I$
at each point along the line are governed by the following telegrapher's equations
\bea \label{Eq_line_transmission}
 \partial_x V= &- L \partial_t I,\\
\partial_x I= & - C \partial_t V,
\eea
where $L$ and $C$ are respectively the inductance and the capacitance,
both of which will be assumed to be constant.
It is well-known that the solutions of \eqref{Eq_line_transmission} can be represented as a superposition
of traveling waves moving towards $x=0$ and $x=1$ with velocity $c=(LC)^{-1/2}$.
We have then for $x\in(0,1)$ and $t>0$,
\begin{equation}
\begin{split}
 \label{Eq_propagations}
V(x,t)= & \frac{1}{2}[\phi(t-c^{-1}x) +\psi(t+c^{-1}x)],\\
I(x,t)= & \frac{1}{2Lc}[\phi(t-c^{-1}x) -\psi(t+c^{-1}x)] =
\frac{\sqrt{C/L}}{2}[\phi(t-c^{-1}x) -\psi(t+c^{-1}x)],
\end{split}
\end{equation}
where $\phi$ and $\psi$  are some unknown functions to be determined from the boundary conditions.
See e.g.
\cite[Section 164]{Smirnov}.  Frequently, $\phi$ (resp. $\psi$) is called a  {\it progressive wave}
(resp. {\it reflected wave})
since its argument is a solution of the increasing (resp. decreasing)
characteristic $dt/dx=c^{-1}$ (resp. $dt/dx=-c^{-1}$);
see e.g. \cite{ZT86}.

Let us consider first the following linear boundary conditions \bea
\label{Eq_BP1} V(0,t)+R_0 I(0,t)=E,\;\; I(1,t)=0, \textrm{ for } t
\geq 0, \eea where $E$ is a voltage which is assumed to be constant
for simplicity. Physically, this corresponds to a transmission line
that is ``disconnected" from the nonlinear circuit element (diode)
at the right hand boundary, i.e.  $I(1,t)=0$, with some resistor of
resistance $R_0$ at the left boundary. Such boundary conditions are
expressed  in terms of $\phi$ and $\psi$ as \bea \label{Eq_BP-waves}
(1+R_0 \sqrt{C/L}) \phi(t) &+(1-R_0\sqrt{C/L})\psi(t)=2 E,\\
\psi(t+\sqrt{LC}) &=\phi(t-\sqrt{LC}).
\eea

 Consider the Cauchy problem associated with
\eqref{Eq_line_transmission} -- \eqref{Eq_BP1} and given initial
data $V(0, \cdot) = V_0(\cdot)$ and $I(0, \cdot) = I_0(\cdot)$. From
the theory of first order PDEs, we know  that this Cauchy problem is
well-posed provided that $(x,t)$ belongs to the triangular domain
$\mathfrak{T}$ determined by the characteristics according to,
$$
\mathfrak{T}:=\Big\{ (x,t)\in [0,1]\times \mathbb{R}^{+}:  \; -\frac{1}{c}+ \frac{x}{c}\leq t\leq \frac{x}{c}
        \mbox{ and } -\frac{x}{c}\leq t\leq \frac{1}{c}-\frac{x}{c}\Big\}.
        $$
Now, in order to determine $\phi$ and $\psi$ from the initial data and boundary conditions,
we observe that since $x\in[0,1]$, $\phi$ can be determined from $V_0$ and $I_0$ only on
$[-\sqrt{LC},0]$ according to $\phi(-\frac{x}{c})=V_0(x)+\sqrt{L/C} I_0(x)$.
Similarly we initially determine $\psi$ only on  $[0,\sqrt{LC}]$ via $\psi(\frac{x}{c})=V_0(x)-\sqrt{L/C} I_0(x)$.
It is then equation \eqref{Eq_BP-waves} which allows one to determine $\phi$ and $\psi$ for all of $\mathbb{R}$. To be more precise, let us introduce $\tilde{\psi}(t)=\psi(t+\sqrt{LC})$;
now since $\psi$ is known on  $[0,\sqrt{LC}],$ $\tilde{\psi}$ is  determined on $[-\sqrt{LC},0],$ as $\phi$ is.
We can therefore rewrite the system \eqref{Eq_BP-waves}  as the following {\it difference equation} in continuous time:
\bea \label{Eq_wavesfunction}
\begin{bmatrix} \phi(t) \\ \tilde{\psi}(t) \end{bmatrix}=\begin{bmatrix} 0 & -
\frac{1-R_0\sqrt{C/L}}{1+R_0\sqrt{C/L}}\\ 1 & 0 \end{bmatrix} \begin{bmatrix} \phi(t-\tau) \\ \tilde{\psi}(t-\tau) \end{bmatrix}+\begin{bmatrix} \frac{2 E}{1+R_0\sqrt{C/L}} \\ 0 \end{bmatrix},\\
 \eea
where $\tau=\sqrt{LC}.$ This delay $\tau$ may therefore be
interpreted as the characteristic time associated with wave
propagation; it represents the transmission time through the
transmission line itself.

The system \eqref{Eq_wavesfunction} may be written in a compact form as
\bea \label{Eq_excompact_form}
    x(t)-Bx(t-\tau)=f,
\eea where $x(t)=[\phi(t)  \; \tilde{\psi}(t)]^T$, $B$ is the matrix
arising in  \eqref{Eq_wavesfunction} and $f$ denotes the constant
vector in \eqref{Eq_wavesfunction}. By introducing  the operator
$D_0 \Phi:= \Phi(0)-B\Phi(-\tau)$  acting on functions $\Phi \in
X:=C([-\tau,0],\mathbb{R}^2)$ this equation takes the functional
representation \be\label{Eq_difference-representation} D_0 x_t=f,
\ee where  $x_t\in X$, for all $t$ and we take
$x_t(\theta)=x(t+\theta)$ for all $\theta\in[-\tau,0]$.  When
supplemented with the initial data $x_0=[\phi,\tilde{\psi}]^T,$
where $\phi(\theta)=V_0(-c\theta)+\sqrt{L/C} I_0(-c\theta)$ and
$\tilde{\psi}(\theta)=V_0(c\theta+1)-\sqrt{L/C} I_0(c\theta+1),$
with $\theta\in[-\tau,0],$ equation
\eqref{Eq_difference-representation}(along with
\eqref{Eq_propagations}) gives thus another representation of the
Cauchy problem associated with the original  boundary value problem
(BVP) \eqref{Eq_line_transmission}-\eqref{Eq_BP1}.

\begin{Rmk}\label{Rem_NDDE_discontinuity}
Let $x_0:[-\tau,0]\rightarrow \mathbb{R}^2$ be some initial
condition in $X$ and define $x(t)$, starting from $x_{0}$ according
to the so-called method of steps (see e.g. \cite{Smith11}); having
determined $x(t)$ on the interval $[-\tau, (k-1) \tau)$ for $k \geq
1$, we determine $x(t)$ on $[(k-1)\tau,k\tau)$ using
\eqref{Eq_excompact_form}.  Then $x(0^+)-x(0^-)=B x_0(-\tau)+f
-x_0(0)$ with $x(0^{+/-})=\lim_{t\rightarrow 0^{+/-}} x(t)$; unless
$x_{0}$ satisfies \eqref{Eq_excompact_form} at $t=0$, we obtain a
discontinuity in $x$ at $t=0$, i.e. $x(0^+)-x(0^-)\neq 0$, that will
propagate to each $t=k\tau$, $k \in \mathbb{N}$. To avoid such a
phenomenon we need to assume that $Bx_0(-\tau)+f =x_0(0)$ which
translates to $V(0,0)+R_0 I(0,0)=V_0(0)+R_0 I_0(0)=E$ and
$I(1,0)=I_0(1)=0$ in the original problem BVP
\eqref{Eq_line_transmission}-\eqref{Eq_BP1}, requiring in other
words that the initial condition $I_{0}$, $V_{0}$ must satisfy
the boundary conditions \eqref{Eq_BP1}. This phenomenon is
reminiscent of the well-known phenomena of singularity propagation
along the characteristics, typical of hyperbolic PDEs, see e.g.
\cite{ZT86}.
\end{Rmk}

Now, if we consider instead of \eqref{Eq_BP1} the following linear boundary conditions
\bea \label{Eq_BC2}
\frac{d v(0,t)}{dt}+R_0 \frac{d  i(0,t)}{dt}=E,\;\;
\frac{d i(1,t)}{dt}=0,
\eea
similar computations lead us to the following functional equation which is now an NDDE of the form
\bea \label{Eq_NFDE_Telegrapher}
\frac{d}{dt} D_0 x_t=f,
\eea
giving another formulation of the Cauchy problem associated
with the  boundary value problem (BVP) \eqref{Eq_line_transmission}-\eqref{Eq_BC2}.  Note that, here  the operator $D_0$ is still defined as $ D_0\Phi:= \Phi(0)-B\Phi(-\tau)$  acting on functions
$\Phi \in X:=C([-\tau,0],\mathbb{R}^2)$.   From this introductory example,  we can see how the difference operator
$D_0$ arises naturally in the NDDE formulation. Further details concerning the relationship
between difference equations and NDDEs, will be given in the next section.

\begin{Rmk}\label{rmk:GFDapp}
As mentioned above, the telegrapher's equations are not the
only examples of hyperbolic PDEs which can be reformulated as
systems involving retarded arguments of similar nature
to NDDEs. For instance, in modeling of the El Ni\~no-Southern
Oscillation (ENSO), the authors in \cite{GT00} have proposed such a
neutral formulation derived from a model introduced in
\cite{JN93a, JN93b}.  In these later works the  ENSO dynamics were
described by a linearized shallow-water (hyperbolic) model on an
equatorial $\beta$-plane coupled with an advection equation
describing the sea surface temperature changes at the earth's
equator. In \cite{GT00}, by dropping the spatial dependence in
the advection equation, the authors reformulated  the resulting
PDE-ODE model as a system made up of difference equations
coupled with ODEs. Note that, coupled systems of ODEs with
difference equations arise in other physical applications dealing
with wave propagations phenomena, and are strongly related to the
theory of NDDEs, see \cite{Niculescu_stab}.
\end{Rmk}

\subsubsection{Existence of invariant measures for NFDEs: preliminaries and abstract result}
\label{Sec_prelimNDDE}

 We recall here some preliminaries concerning neutral functional
differential equations used in the statement and the proof of
Theorem~\ref{Thm_existence_attractor} below concerning the existence
of global attractors and hence of invariant measures for the
semigroup generated by such equations.   In the next subsection (see
Theorem~\ref{THM_IM_NDDE} below) we will provide concrete conditions
which allow us to apply this abstract result to a broad class of
NDDEs. While, as we said above, the existence of invariant measures
for NFDEs is new, the exposition here largely follows the framework
presented in  \cite{HaleLunel93}, to which we refer the reader for
further details on the extensive general theory.

Throughout what follows we take $\langle \cdot, \cdot \rangle$ be the inner product on $\mathbb{R}^n$, with $\vert \cdot \vert$ the corresponding norm.
Let $\tau > 0,$ and take $X := C([-\tau,0],\mathbb{R}^n)$
be the space of continuous
functions taking $[-\tau,0]$ into $\mathbb{R}^n$ which will be endowed in what follows with the
topology induced by the supremum norm viz.
$$\vert \phi\vert_{\infty} :=\underset{-\tau\leq\theta \leq 0 }\sup\vert \phi(\theta)\vert, \; \mbox{ for }\phi \in X.$$
We denote by $\mathcal{M}_n(\mathbb{R})$, the space of $n\times n$ matrix  with real coefficients
which we endow with the natural norm i.e.
$\| M \|:=\sup\{ \vert M x\vert: \; x\in\mathbb{R}^n,\; \vert x\vert=1\}$ for any $M \in \mathcal{M}_n(\mathbb{R})$.

Throughout this section we consider a fixed map $M$ taking $[-\tau,0]$
into $\mathcal{M}_n(\mathbb{R})$ which is of bounded variations in the sense that,
$$
    \underset{[-\tau,0]}{\mbox{Var}(M)}:=\sup \sum_i \|M(\theta_i)-M(\theta_{i+1})\| < \infty,
$$
the supremum being taken over all finite partitions $-\tau=\theta_0\leq \theta_1\leq \cdots\leq \theta_k=0$ of the interval $[-\tau,0]$.
We assume furthermore that
$$
    \forall\; s\in (0,\tau],\; \underset{[-s,0]}{\mbox{Var}(M)}\leq \gamma(s),
$$
with $\gamma$  a continuous nondecreasing scalar function for $s\geq 0$, such that $\gamma(0)=0$. In
language of measure theory, this last requirement implies
that the vector measure $M$ (after identifying $M_n$ with $\mathbb{R}^{n^2}$) is {\it nonatomic} at zero, i.e. $M(0)-M(0^{-})=0$.
In other words $M$ does not attribute a non-zero measure to $\{0\}$.
This assumption about $M$ being {\it non-atomic} at zero gives a sufficient
condition to develop a theory of existence and uniqueness of solutions of NFDEs of the form \eqref{Eq_NDDE};
see \cite{HaleLunel93} and Proposition \ref{Prop_existenceNDDE} below.

With this $M$ in hand we define a bounded linear operator $D:X\rightarrow \mathbb{R}^n$ according to
\be\label{Eq_D_isnonatomic}
D\phi:=\phi(0)-\int_{-\tau}^0  [d M(\theta)]\phi(\theta).\;\footnote{Note that, by application of the Bartle-Dunford-Schwartz
theorem (see \cite[Theorem 3.2]{Bartle_al55}), any bounded linear
operator  $L$ from $X$ to $\mathbb{R}^n$ may be represented as  Stieltjes integral $\int_{-\tau}^0  [d N(\theta)]\phi(\theta)$ where $N$ is an $\mathbb{R}^{n^2}$ valued measure
on Borel sets of $[-\tau,0]$ which is of bounded variations. Operators having the form of  \eqref{Eq_D_isnonatomic} may therefore be interpreted as those which
have the representation corresponding to $N=\delta_0-M$ with $M$ being non-atomic at zero and $\delta_0$ being the Dirac measure at $0$.}
\ee
In \eqref{Eq_D_isnonatomic}, the notation  $d M(\theta)$ before the integrand $\phi(\theta)$ emphasizes that $M(\theta)$ is a matrix and $\phi(\theta)$ is a column vector so that the integral is column  vector-valued.

\begin{Rmk}
A classical and fundamental variety of examples of such operators $D$ arises
when we take $M$ to be a step function. This leads to
$$D\phi=\phi(0)-\sum_{k=1}^{N} B_k \phi(-\tau_k),$$ where the $B_k$'s
are $n\times n$ matrices and the delays satisfy $0<\tau_1<...<\tau_N\leq
\tau$. This type of operator $D$ is very often encountered in the applications,
see e.g. \cite{Michiels_Niculescu} and the references therein.
\end{Rmk}

Let $x$ be an $\R^n$ valued, continuous function defined on an interval $[-\tau,T]$ for some $T>0$.
For $t \in [0,T]$, we
define $x_t\in X$  as being the ``copy" of $x$ over the time
interval $[t-\tau,t]$ shifted down to $[-\tau,0]$, i.e. for each $t \in
[0,T],$ we define $x_t \in X$ via $x_t(\theta)=x(t+\theta)$ for all $\theta\in[-\tau,0]$.
For simplicity, let us assume that  $f:X\rightarrow \mathbb{R}^n$ is as regular as is
needed for a moment. A {\it neutral functional differential equation (NFDE)} is
then given by the following relation:
\be\label{Eq_NDDE}
\frac{d}{dt} Dx_t=f(x_t),
\ee
where $\frac{d}{dt}$ denotes the right hand side derivative at $t$. The
initial data is given as element in $\phi \in X.$ To make sense to
\eqref{Eq_NDDE} it is required that $t\rightarrow Dx_t $ is $C^1$ as a map with values in $\mathbb{R}^n$; cf. \cite{HaleLunel93}. More precisely we have the following definition of solutions of \eqref{Eq_NDDE}:

\begin{Def}\label{Def_solNDDE}
For a given $\phi \in X,$ we say that
$x_t(\cdot;\phi)$ is a solution of \eqref{Eq_NDDE} on the interval
$[0,\alpha_{\phi}),$ $\alpha_{\phi}>0,$ with initial data $\phi$ at
$t=0$, if $x_0(\cdot, \phi) = \phi(\cdot)$, $x_t(\cdot;\phi)\in X$, for all $t \in [0, \alpha_{\phi})$,
the map $t\rightarrow Dx_t(\cdot;\phi)$ is in $C^1([0,\alpha_{\phi}),\mathbb{R}^n)$, and $x_t(\cdot, \phi)$ satisfies \eqref{Eq_NDDE}
for all $t \in [0, \alpha_\phi)$.
\end{Def}

\begin{Rmk}\label{rem_regularity}
\mbox{}
\begin{itemize}
\item[(i)] Note that, since $M$ is assumed to be non atomic at zero, we have that
$\int_{-\tau}^0  [d M(\theta)]\phi(\theta)\xrightarrow{\tau \rightarrow 0}0$
 for any $\phi\in X$.  In this limiting case $Dx_t=x(t)$ for all $t\geq 0$, which means that the
 the case of ODEs corresponds formally to $\tau=0$ with the well-posed problem of $\dot{x}=f(x)$
 reducing from Definition~\ref{Def_solNDDE} to the classical sense of Hadamard.
\item[(ii)] Similarly to the case of difference equations, starting from $C^0$ initial data, singularities may propagate but at the level of the derivatives (cf. Remark \ref{Rem_NDDE_discontinuity}) and we do not have in general  that $t\rightarrow x_t$ is $C^1$ but in such a case,
\eqref{Eq_NDDE} reduces to $D \dot{x}_t=f(x_t),$ an equation in
which the derivative occur with delayed arguments; see \cite{HaleLunel93} for further details.
\end{itemize}
\end{Rmk}

We recall now the main results on existence, uniqueness and continuous dependence of the solutions of \eqref{Eq_NDDE}, and we refer to \cite{HaleLunel93} for a
proof of these properties.
\begin{Prop}\label{Prop_existenceNDDE}
Let $D$ be as given by \eqref{Eq_D_isnonatomic} and consider any $f$ be in $C^1(X,\mathbb{R}^n)$.
Then, for any $\phi \in X$, there exists a unique solution $x_t(\cdot;\phi)$ of \eqref{Eq_NDDE}
through $\phi$ at $t=0$ defined on a maximal interval
$[0,\alpha_{\phi})$, such that if $\alpha_{\phi} <\infty$ then
$\underset{t\rightarrow \alpha_{\phi}^{-}}\lim
|x_t(\cdot;\phi)|_{\infty}=\infty$. Furthermore, the map $(t,\phi) \rightarrow x_t(\cdot;\phi)$ is continuous
from the set $[0,\alpha_{\phi})\times X$ into $X$.
If we assume that all solutions  of \eqref{Eq_NDDE}
are global in time, i.e. we assume that $\alpha_\phi = \infty$ for all $\phi \in X$, and we introduce
$S_{D,f}(t)\phi:=x_t(\cdot,\phi),$ then $\{S_{D,f}(t)\}_{t\geq 0}$ is a
continuous semigroup\footnote{In the sense recalled at the beginning of Section \ref{sec:MainResults}.} acting on $X$.
\end{Prop}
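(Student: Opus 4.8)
The plan is to recast \eqref{Eq_NDDE} as an integral fixed point problem in which the non-atomicity of the difference operator $D$ at zero --- quantified by $\mathrm{Var}_{[-s,0]}(M)\le\gamma(s)$ with $\gamma(0)=0$ --- supplies the contractivity. First I would observe that a continuous $x\colon[-\tau,\alpha)\to\R^n$ with $x_0=\phi$ is a solution of \eqref{Eq_NDDE} in the sense of Definition~\ref{Def_solNDDE} if and only if, for all $t\in[0,\alpha)$,
\[
   x(t)=\int_{-\tau}^0[dM(\theta)]\,x(t+\theta)+D\phi+\int_0^t f(x_s)\,\mathrm{d}s,
\]
where for negative arguments $x_s$ is understood via $\phi$; indeed, integrating $\tfrac{d}{dt}Dx_t=f(x_t)$ from $0$ and using $Dx_t=x(t)-\int_{-\tau}^0[dM(\theta)]x(t+\theta)$ gives one direction, and the converse follows since the right-hand side above is then seen to be $C^1$ in $t$ with derivative $f(x_t)$. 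For $t\in[0,a]$ with $a<\tau$, the term $\int_{-\tau}^0[dM(\theta)]x(t+\theta)$ splits as $\int_{-\tau}^{-t}+\int_{-t}^0$: the first piece depends only on the fixed datum $\phi$, while the second is a bounded linear functional of the unknown values $\{x(s):s\in[0,t]\}$ whose norm on $C([0,a],\R^n)$ is at most $\mathrm{Var}_{[-a,0]}(M)\le\gamma(a)$. Since $f\in C^1(X,\R^n)$ is bounded and Lipschitz on bounded sets, fixing $\phi$ one can choose $a>0$ and $R>0$ (depending only on $|\phi|_\infty$, on $\gamma$, and on $f$) so that the right-hand side above maps the closed ball $\{y\in C([0,a],\R^n):y(0)=\phi(0),\ |y-\phi(0)|_\infty\le R\}$ into itself and is a contraction there as soon as $\gamma(a)+a\,\mathrm{Lip}(f)<1$. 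The Banach fixed point theorem then delivers a unique local solution, with admissible step $a$ depending only on a bound for $|\phi|_\infty$.

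Next I would handle continuation in the usual way. Concatenating local solutions (legitimate by uniqueness) produces a solution on a maximal interval $[0,\alpha_\phi)$. If $\alpha_\phi<\infty$ but $\lim_{t\to\alpha_\phi^-}|x_t(\cdot;\phi)|_\infty\neq\infty$, then some sequence $t_n\uparrow\alpha_\phi$ has $\sup_n|x_{t_n}(\cdot;\phi)|_\infty<\infty$; since the local existence time starting from the datum $x_{t_n}(\cdot;\phi)$ depends only on that uniform bound, the solution would extend beyond $\alpha_\phi$, contradicting maximality --- hence the claimed blow-up. For continuous dependence I would subtract the integral equations for data $\phi$ and $\psi$ and estimate $u(t):=\sup_{0\le s\le t}|x(s;\phi)-x(s;\psi)|$ over successive intervals of length $a$; on each such interval the difference-operator contribution splits as before, the ``new'' part, of norm $\le\gamma(a)$, is absorbed into the left-hand side, and what remains is controlled by $|\phi-\psi|_\infty$, the total variation of $M$, and $a\,\mathrm{Lip}(f)$. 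Iterating over a compact interval $[0,T]$ yields $\sup_{t\in[0,T]}|x_t(\cdot;\phi)-x_t(\cdot;\psi)|_\infty\le C_T\,|\phi-\psi|_\infty$, which together with the continuity of $t\mapsto x_t(\cdot;\phi)$ gives joint continuity of $(t,\phi)\mapsto x_t(\cdot;\phi)$ on $[0,\alpha_\phi)\times X$.

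Finally, under the standing assumption $\alpha_\phi=\infty$ for every $\phi$ and with $S_{D,f}(t)\phi:=x_t(\cdot;\phi)$, the semigroup axioms follow: $S_{D,f}(0)=\mathrm{Id}_X$ is immediate from $x_0=\phi$, and $S_{D,f}(t+s)=S_{D,f}(t)S_{D,f}(s)$ holds by uniqueness, since $r\mapsto x_{r+s}(\cdot;\phi)$ solves \eqref{Eq_NDDE} with initial datum $x_s(\cdot;\phi)$; the joint continuity just established is exactly the regularity demanded of a continuous semigroup in Section~\ref{sec:MainResults}. The main obstacle --- and the \emph{only} place where the neutral character is genuinely felt --- is the difference-operator term $\int_{-\tau}^0[dM(\theta)]x(t+\theta)$: it is precisely the estimate $\mathrm{Var}_{[-s,0]}(M)\le\gamma(s)\to0$ as $s\to0$ that makes the fixed point map a contraction on short intervals and lets the continuous-dependence estimate close. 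Without non-atomicity at zero the contraction argument collapses, and both existence of solutions in the sense of Definition~\ref{Def_solNDDE} and their uniqueness may fail.
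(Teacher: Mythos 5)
The paper offers no proof of this proposition: it is recalled as a classical result with the argument deferred entirely to the cited reference \cite{HaleLunel93}, so there is no internal proof to compare against. Your sketch reconstructs, correctly in outline, the standard argument from that reference: integrate \eqref{Eq_NDDE} once to obtain the fixed point formulation $x(t)=\int_{-\tau}^0[dM(\theta)]\,x(t+\theta)+D\phi+\int_0^t f(x_s)\,\mathrm{d}s$; split the Stieltjes term at $\theta=-t$ so that the part involving the unknown values acts on $C([0,a],\mathbb{R}^n)$ with norm at most $\mathrm{Var}_{[-a,0]}(M)\le\gamma(a)$; and use $\gamma(a)\to0$ as $a\to0$ (non-atomicity of $M$ at zero) to make the map a contraction on a short interval. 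Continuation by concatenation, Lipschitz dependence on the data over compact time intervals, and the semigroup identity via uniqueness and autonomy all follow as you describe. One point deserves more care: on the infinite-dimensional space $X=C([-\tau,0],\mathbb{R}^n)$, the hypothesis $f\in C^1(X,\mathbb{R}^n)$ yields only \emph{local} Lipschitz continuity, not ``bounded and Lipschitz on bounded sets.'' Local Lipschitzness is enough for existence, uniqueness and continuous dependence, but your derivation of the blow-up alternative rests on the local existence time depending only on a bound for $|\phi|_\infty$, which requires $f$ and its Lipschitz constants to be bounded on bounded sets. This is precisely the caveat attached to the continuation theorem in \cite{HaleLunel93}, where the alternative is phrased as the trajectory eventually leaving every closed bounded set on which $f$ is bounded; the proposition as stated already presumes the stronger form, so your proof inherits rather than creates this imprecision.
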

\noindent Hereafter in this section, we will assume that $D$  as given by \eqref{Eq_D_isnonatomic} and $f\in C^1(X,\mathbb{R}^n)$ are such that all solutions of \eqref{Eq_NDDE} are global in
time.\footnote{See Theorem 2.1 in \cite{Cruz_Hale69} and references therein, for classical conditions ensuring existence of global solutions in time. See also Lemma \ref{Lem_dissip} proved below.}

We next describe a  decomposition of the semigroup $S_{D,f}(t)$,
given in precise form in Lemma~\ref{Lem_decomp_NDDE} below,
which splits $S_{D,f}(t)$ into two semigroups,
one of which is compact and the other of which decays exponentially towards zero.
As we shall see, this decomposition will depend on the asymptotic stability of the zero solution
of a certain difference equation associated with $D$; cf. \eqref{Eq_newD},  \eqref{Eq_diff_eq}
below.
Note that, in view of Proposition~\ref{thm:AttractorCriteria}, Lemma~\ref{Lem_decomp_NDDE}
will provide a sufficient condition for the existence of a global attractor.

To formulate this result we will assume that the operator $D$ is given by,
\bea\label{Eq_newD}
D\phi &= D_0 \phi -\int_{-\tau}^0 M(\theta)\phi(\theta)d\theta, \;  \mbox{ with,}\\
D_0\phi &= \phi(0)-\sum_{k=1}^{N} B_k \phi(-\tau_k) \mbox{ for } \phi \in X,
\eea
where the
$B_k$'s are $n\times n$ matrices with the delays of the form
$0<\tau_1<\cdot \cdot \cdot<\tau_N\leq \tau$, and where the family of
matrices $\{M(\theta)\}_{\theta\in [-\tau,0]}$ satisfies $\int_{-\tau}^0
\| M(\theta)\|d\theta < \infty$.
We now consider a linear difference equation associated with the operator $D_{0}$ given by:
\be\label{Eq_diff_eq}
D_0 y_t= y(t)-\sum_{k=1}^{N} B_k y(t-\tau_k)=0, \mbox{ for } t\geq 0.
\ee
Clearly, any initial value $y_0$ of this equation must live in the null space $\mathcal{N}(D_0),$ so we can restrict
our attention to $X_{D_{0}}$ the closed subset of $X$ defined according to:
$$
X_{D_{0}} := X\cap \mathcal{N}(D_0)=\{\phi \in X\;:\; D_0 \phi=0\}.
$$
Then, for any $\phi \in X_{D_0},$ it can be shown that there is a
unique solution $t\rightarrow y_t(\cdot;\phi)$ of \eqref{Eq_diff_eq}
which is continuous from $[-\tau,\infty)$ into $X_{D_0}$ and
coincides with $\phi$ on $[-\tau,0]$; see \cite{HaleLunel93}.\footnote{Note that such a $y_{t}$ can be derived from $\phi
\in X_{D_{0}}$ using the method of steps;  cf.
Remark~\ref{Rem_NDDE_discontinuity}, above.} This implies that the
translation along the solutions of \eqref{Eq_diff_eq}, $$
S_{D_0}(t)\phi=y_t(\cdot;\phi)$$ defines a $C^0$ semigroup on
$X_{D_0}$. As shown in  \cite[Theorem 3.3. p. 284]{Hale77}, the
infinitesimal generator $A_0$ of this semigroup is given by
$$
A_0\phi=\dot{\phi} \mbox{ for }  \phi\in \mathcal{D}(A_0)=\{\phi \in X_{D_0}:\; \dot{\phi}\in X_{D_0}\},
$$
and the spectrum of $A_0,$ $\sigma(A_0),$ is then given by
$$
\sigma(A_0)=\{\lambda \in \mathbb{C}\;:\; \mbox{det}(H(\lambda))=0\}
$$
where,
$$
H(\lambda)=I-\sum_{k=1}^{N} B_k e^{-\lambda \tau_k}.
$$
This last relationship can be heuristically understood by looking for nontrivial solutions of  \eqref{Eq_diff_eq} of the form $y_t(\theta)=e^{\lambda (t+\theta)}v_{\lambda},$ $\theta\in [-\tau,0]$ where $v_{\lambda}$ is
some vector living in $\mbox{ker}(H(\lambda))$.

As we already mentioned, Lemma \ref{Lem_decomp_NDDE}
below is conditioned  to the stability of the  zero solution of
\eqref{Eq_diff_eq}.  This stability is related to the location of
the spectrum of $A_0$ with respect to the imaginary axis, which is
itself determined by the roots of the characteristic equation
$\mbox{det}(H(\lambda))=0$; see e.g. \cite{Michiels_Niculescu}.  According to
\cite[Theorem 4.1,  p. 287, Lemma 3.3, p. 284]{Hale77}, we deduce
that if the real part of the rightmost eigenvalue of $A_0$ is
located in the left half-complex plane, i.e.  if
\be
\label{eq_stabilityofAo}
\mathfrak{R}_{A_0}:=\sup\left\{\mbox{Re} (\lambda)\;:\;
\mbox{det}\left[I-\sum_{k=1}^{N} B_k e^{-\lambda \tau_k} \right]=0\right\}
<0,
\ee
then $0$ is exponentially stable in $X_{D_0}$ for
$S_{D_0}(t)$  in the sense that there exist $C>0$  such that for all
$t\geq 0$ and all $\phi\in X_{D_0}$ we have
$|S_{D_0}(t)\phi|_{\infty}\leq C e^{-\alpha t} |\phi|_{\infty}$,
with $\alpha=\mathfrak{R}_{A_0}$. In what follows we will summarize this property by just saying that $D_0$
is {\it stable}.

 With this background now in place, the desired decomposition lemma may now be stated as follows:
\begin{Lem}\label{Lem_decomp_NDDE}
Consider $D$,  and $f$ such that \eqref{Eq_NDDE} is
globally well posed in the sense of Proposition~\ref{Prop_existenceNDDE}. Let
$\{S_{D,f}(t)\}_{t\geq 0}$ be the continuous semigroup generated by
\eqref{Eq_NDDE} with $D$ satisfying \eqref{Eq_newD} such that the
 difference operator $D_0$ is stable in the sense that $\mathfrak{R}_{A_{0}}$ as given by  \eqref{eq_stabilityofAo} is strictly less than zero.
Then there exists a (time-independent) linear bounded operator
$\Psi:X \rightarrow X_{D_0}$ such that,
\be\label{Eq_decomp_semigroup} S_{D,f}(t)=S_{D_0}(t) \circ \Psi +
U_{D}(t), \quad t\geq 0, \ee where $\{U_D(t)\}_{t\geq 0}$ is a
compact semigroup on $X$, and for every $K>0$,
$$\underset{|\phi|_{\infty}\leq K}\sup |S_{D_0}(t) \circ \Psi
(\phi)|_{\infty}\xrightarrow{t\rightarrow \infty}0 \textrm{
exponentially with uniform decay rate } \mathfrak{R}_{A_0}.$$
\end{Lem}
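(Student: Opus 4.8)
The plan is to recast \eqref{Eq_NDDE} as an inhomogeneous linear difference equation for $x$ whose forcing is \emph{more regular in $t$ than the state itself}, and then to invoke the structure theory of stable difference operators (\cite{HaleLunel93,Hale77,Cruz_Hale69}) to split off the component governed by $D_0$. First I would integrate: by Definition~\ref{Def_solNDDE} the map $t\mapsto Dx_t$ is $C^1$ with $\frac{d}{dt}Dx_t=f(x_t)$, so $Dx_t=D\phi+\int_0^t f(x_s)\,ds$ for $t\ge 0$. Writing $D$ in the form \eqref{Eq_newD}, i.e.\ $Dx_t=D_0x_t-\int_{-\tau}^0 M(\theta)x(t+\theta)\,d\theta$, and rearranging, the solution $x=x(\cdot\,;\phi)$ satisfies
\[
 D_0x_t=p(t),\qquad p(t):=D\phi+\int_0^t f(x_s)\,ds+\int_{-\tau}^0 M(\theta)\,x(t+\theta)\,d\theta,\qquad t\ge 0,
\]
together with the compatibility relation $p(0)=D_0\phi$. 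On any interval $[0,T]$ and uniformly for $\phi$ in a bounded set (where, by global existence and the continuity assertion of Proposition~\ref{Prop_existenceNDDE}, trajectories stay in a bounded set), $f(x_s)$ is bounded, so $t\mapsto\int_0^t f(x_s)\,ds$ is Lipschitz, while $t\mapsto\int_{-\tau}^0 M(\theta)x(t+\theta)\,d\theta$ has modulus of continuity controlled by $\|M\|_{L^1}$ times that of $x$. Hence $p$ is continuous with a modulus of continuity on $[0,T]$ strictly better than that available for $\phi$ alone; this gain is the ultimate source of compactness.

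Next I would bring in the stable difference operator $D_0$. Under \eqref{eq_stabilityofAo} the homogeneous difference semigroup $S_{D_0}(t)$ on $X_{D_0}$ satisfies $|S_{D_0}(t)\psi|_\infty\le Ce^{\mathfrak{R}_{A_0}t}|\psi|_\infty$, as recalled above from \cite[Thm.~4.1]{Hale77}. For the inhomogeneous problem $D_0x_t=p(t)$, $x_0=\phi$ (with $p(0)=D_0\phi$), the classical solution formula (see \cite[Ch.~9]{HaleLunel93}) expresses $x_t$ as $S_{D_0}(t)\Psi\phi$ plus a term depending boundedly and causally on $p|_{[0,t]}$, where $\Psi:X\to X_{D_0}$ is a bounded linear projection of $X$ onto $X_{D_0}$ (so $\Psi|_{X_{D_0}}=\mathrm{Id}$ and $\mathrm{Id}-\Psi$ has rank at most $n$). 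Since $S_{D_0}(s)\Psi\phi\in X_{D_0}$ one has $\Psi S_{D_0}(s)\Psi=S_{D_0}(s)\Psi$, so $\{S_{D_0}(t)\circ\Psi\}_{t\ge 0}$ is a $C^0$ semigroup, and $\sup_{|\phi|_\infty\le K}|S_{D_0}(t)\Psi\phi|_\infty\le C\|\Psi\|\,K\,e^{\mathfrak{R}_{A_0}t}\to 0$, which is exactly the claimed uniform decay at rate $\mathfrak{R}_{A_0}$. Setting $U_D(t):=S_{D,f}(t)-S_{D_0}(t)\circ\Psi$ makes \eqref{Eq_decomp_semigroup} hold by construction, and it remains only to show that each $U_D(t)$ is a compact operator (the remaining structural properties of the family $\{U_D(t)\}$ being those recorded in \cite[Ch.~9]{HaleLunel93}).

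For compactness I would observe that, up to the finite-rank contribution of $\mathrm{Id}-\Psi$, $U_D(t)$ coincides with the solution operator of the difference equation $D_0v_t=p(t)$ driven by the time-regular forcing $p=p(\phi)$. As $\phi$ ranges over a bounded set, $\{p(\phi)\}$ is a bounded, equicontinuous family on $[0,t+\tau]$ by the previous paragraph. Solving $D_0v_t=p(t)$ by the method of steps on successive intervals of length $\min_k\tau_k$, the exponential estimate $|S_{D_0}(\sigma)\psi|_\infty\le Ce^{\mathfrak{R}_{A_0}\sigma}|\psi|_\infty$ propagates uniform bounds and a uniform modulus of continuity for $v$ on each $[0,t]$, precisely because $\mathfrak{R}_{A_0}<0$ makes the step-to-step recursion contractive over one period. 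Therefore $\{U_D(t)\phi:\ |\phi|_\infty\le K\}$ is bounded and equicontinuous in $C([-\tau,0],\mathbb{R}^n)$, so by Arzel\`a--Ascoli it is precompact; hence $U_D(t)$ is compact. The absolutely continuous kernel term $\int_{-\tau}^0 M(\theta)\,\cdot\,d\theta$ in \eqref{Eq_newD} adds nothing genuinely new: on bounded sets it behaves exactly like $\int_0^t f(x_s)\,ds$ and is simply folded into $p$.

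The hard part is this last step: quantifying how the $t$-regularity of the forcing $p$ compensates for the \emph{total absence of smoothing} in the difference operator $D_0$, so as to upgrade the $p$-driven component of $S_{D,f}(t)$ from merely bounded to equicontinuous. This is exactly where the stability hypothesis \eqref{eq_stabilityofAo} is used in an essential way, through the uniform exponential bound on $S_{D_0}$ that keeps the step-by-step iteration under control; were $\mathfrak{R}_{A_0}\ge 0$ the argument --- and the conclusion --- would fail. Fortunately the substance of these estimates is already contained in \cite[Ch.~9]{HaleLunel93} (and \cite{Cruz_Hale69}) for an essentially equivalent class of difference operators, so the proof amounts to transcribing those arguments into the notation \eqref{Eq_newD}--\eqref{eq_stabilityofAo} and verifying that the kernel $M$ introduces no new difficulty.
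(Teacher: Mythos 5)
Your route is the right one --- it is essentially the Cruz--Hale / Hale--Lunel argument that the paper itself invokes (the paper gives no proof of this lemma, deferring entirely to \cite{Cruz_Hale69,Cruz_Hale70,HaleLunel93}): integrate the equation so that the trajectory solves the inhomogeneous difference equation $D_0x_t=p(t)$ with a forcing $p$ that is more regular than the state, split off $S_{D_0}(t)\circ\Psi$ using the right inverse of $D_0$, and obtain compactness of the remainder from equicontinuity of the family $\{p(\phi)\}$ via Arzel\`a--Ascoli. Two of your steps, however, do not hold as written.

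First, the uniform boundedness of $\{x_s(\cdot;\phi):\ s\in[0,T],\ |\phi|_\infty\le K\}$ does \emph{not} follow from ``global existence and the continuity assertion of Proposition~\ref{Prop_existenceNDDE}'': separate (or even joint) continuity of $(t,\phi)\mapsto x_t$ controls the image of \emph{compact} sets, but bounded subsets of $X=C([-\tau,0],\mathbb{R}^n)$ are not compact. This is precisely why the cited references phrase the conclusion as \emph{conditional} compactness ($U_D(t)$ takes into precompact sets those bounded $B$ for which $\bigcup_{0\le s\le t}S_{D,f}(s)B$ is bounded); in the present paper the missing boundedness is supplied downstream by the dissipativity estimates of Lemma~\ref{Lem_dissip}. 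You must either assume it or prove it. Second, and more seriously for what you yourself single out as the hard part: the claim that $t\mapsto\int_{-\tau}^0 M(\theta)x(t+\theta)\,d\theta$ has ``modulus of continuity controlled by $\|M\|_{L^1}$ times that of $x$'' cannot yield equicontinuity of $\{p(\phi):|\phi|_\infty\le K\}$, because for $t<\tau$ the values $x(t+\theta)$ are values of $\phi$, and a bounded set of initial data admits no common modulus of continuity; indeed your very next sentence (``a modulus strictly better than that available for $\phi$ alone'') contradicts the bound you just stated. The correct mechanism is the convolution structure: writing the term as $\int N(s-t)x(s)\,ds$ with $N:=M\,\mathbf{1}_{[-\tau,0]}$, its increments are bounded by $\sup|x|\cdot\|N(\cdot-t)-N(\cdot-t')\|_{L^1}$, so equicontinuity follows from continuity of translation in $L^1$, with a modulus depending only on the uniform bound on trajectories and on $M$ --- not on any modulus of continuity of $x$ or $\phi$. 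With these two repairs (and the observation that, by the stability hypothesis \eqref{eq_stabilityofAo}, the $p$-driven component is an exponentially weighted sum of translates of $p$ plus a finite-dimensional contribution through $p(0)$, hence inherits the bounded--equicontinuous property), your outline does reproduce the proof in the references.
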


 Proofs of this lemma  may be found in \cite{Cruz_Hale69,
Cruz_Hale70}; cf. also \cite{HaleLunel93} for a unified treatment.
Note that the noncompact part of $S_{D,f}(t)$ comes from the
semigroup $S_{D_0}(t)$ which is associated with the difference
equation \eqref{Eq_diff_eq} for which $y_t\equiv 0$ is solution. The
linear bounded operator $\Psi$ arises essentially in order to map
$X$  into $X_{D_0}$, the domain of $S_{D_0}(t)$.  Consider, for
instance, the case $M=0$, and $B_k=0$ for all $k\in \{1,...,N\}$ in
\eqref{Eq_newD}, i.e. when the NFDE \eqref{Eq_NDDE} becomes a
standard RFDE with no retarded arguments on the derivative.  Here,
$\Psi$ is simply defined as the shift $\Psi(\phi)=\phi-\phi(0)$ so
that $\Psi (\phi)\in X_{D_0}$ where, in this case, $X_{D_0}=\{\phi
\in X:\; \phi(0)=0\}$. In the general case,
$\Psi=\mbox{Id}_{X}-\Phi\circ D_0$ where
$\Phi:\mathbb{R}^n\rightarrow X_{D_0}$ is the right inverse of
$D_0$, i.e. $D_0\circ \Phi=\mbox{Id}_{\mathbb{R}^n}$. Indeed, for
such a $\Psi$, we get $\Psi(\phi)=\phi-\Phi\circ D_{0} (\phi)$ for
all $\phi \in X$ which gives $0$ when we compose to the left by
$D_0$ so that, as desired, $\Psi(\phi)\in X_{D_0}$ for all $\phi \in
X$. The existence of such a right inverse $\Phi$ is established in
\cite{Hale77} for operators $D$ of the form \eqref{Eq_D_isnonatomic}
with $M$ nonatomic and in particular for operators such as $D_0$
given in \eqref{Eq_newD}.

Combining Lemma \ref{Lem_decomp_NDDE} and Proposition \ref{thm:AttractorCriteria},
we conclude the following general result concerning the existence of a global attractor
for a wide class of NFDEs. The existence of invariant probability measures, associated
with temporal averages via generalized limits,
then follow for such NFDEs in view of Theorem \ref{thm:Prop1} and Theorem \ref{thm:Prop2}.

\begin{Thm}\label{Thm_existence_attractor}
 Let $X := C([-\tau,0],
\mathbb{R}^n)$.  Assume that $D$ satisfies \eqref{Eq_newD} with $D_0$ stable
in the sense that $\mathfrak{R}_{A_0} <0$,
where $\mathfrak{R}_{A_{0}}$ is defined according to \eqref{eq_stabilityofAo}.
Suppose that for this $D$ and some given $f\in C^1(X,\mathbb{R}^n)$ that
all of the solutions of the associated system \eqref{Eq_NDDE}
are well-defined for all $t\geq 0$ in the sense of Definition~\ref{Def_solNDDE}.
Let $\{S_{D,f}(t)\}_{t\geq
0}$ be the continuous semigroup generated by \eqref{Eq_NDDE} on $X$.
If there exists an absorbing set
$\mathfrak{B}\subset X$, for $S_{D,f}$ then the omega-limit set,
$\omega(\mathfrak{B})$, is the global attractor of $S_{D,f}$.
Moreover, for any given generalized Banach Limit $\LIM$, and for any $\brlMsr_0 \in Pr(X)$ there exists an invariant
    measure $\brlMsr \in Pr(X)$ for $\{ S_{D,f}(t)\}_{t \geq 0}$ whose support is contained in $\mathcal{A}$
    and such that
    \be
    \label{eq:WeakErgodicAve_NDDE}
       \int_{X} \varphi(u) \dr \brlMsr(u)
       = \LIM \frac{1}{T}\int_0^T \int_{X} \varphi(S_{D,f}(t) u) \dr \brlMsr_{0}(u) \dr t,
       \; \textrm{ for any } \varphi \in C_b(X).
  \ee
Furthermore,  if $\mathfrak{m}_0=\delta_{\phi}$, the Dirac measures for some $\phi \in X$, then
\eqref{eq:WeakErgodicAve_NDDE} holds for any $\varphi \in C(X)$.
\end{Thm}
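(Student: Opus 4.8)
The plan is to produce the global attractor by feeding the decomposition of Lemma~\ref{Lem_decomp_NDDE} into the abstract criterion of Proposition~\ref{thm:AttractorCriteria}, and then to read off all the statements about invariant measures directly from Theorems~\ref{thm:Prop1} and \ref{thm:Prop2}. To set things up I would first record that the phase space $X = C([-\tau,0],\mathbb{R}^n)$ is a separable Banach space, hence in particular a complete separable metric space, and that $\{S_{D,f}(t)\}_{t\geq 0}$ is a continuous semigroup on $X$ by Proposition~\ref{Prop_existenceNDDE}; this is the point where the standing assumption of global-in-time existence of solutions of \eqref{Eq_NDDE} is used.

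Next I would verify the two hypotheses of Proposition~\ref{thm:AttractorCriteria} with the Banach space there taken to be $X$. Hypothesis (i), the existence of a bounded absorbing set $\mathfrak{B}$, is assumed outright in the statement. For hypothesis (ii) I would use exactly the splitting provided by Lemma~\ref{Lem_decomp_NDDE}, namely $S_{D,f}(t) = S_1(t) + S_2(t)$ with $S_1(t) := S_{D_0}(t)\circ\Psi$ and $S_2(t) := U_D(t)$; this decomposition is available precisely because $D_0$ is stable, i.e. $\mathfrak{R}_{A_0} < 0$ in the sense of \eqref{eq_stabilityofAo}. Lemma~\ref{Lem_decomp_NDDE} gives that $\sup_{|\phi|_{\infty}\leq K}|S_1(t)\phi|_{\infty}\to 0$ as $t\to\infty$ for every $K>0$ (in fact with uniform exponential rate $\mathfrak{R}_{A_0}$), and that $\{U_D(t)\}_{t\geq 0}$ is a compact semigroup on $X$, so that $S_2(t)B = U_D(t)B$ is a precompact subset of $X$ for every bounded $B \subset X$ and every $t>0$. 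Proposition~\ref{thm:AttractorCriteria} then applies and yields a connected global attractor, given explicitly by $\mathcal{A} = \omega(\mathfrak{B})$.

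With the global attractor in hand the remaining assertions are immediate applications of the abstract results. Since $\{S_{D,f}(t)\}_{t\geq 0}$ is a continuous semigroup on the complete separable metric space $X$ possessing a global attractor, Theorem~\ref{thm:Prop2} produces, for any $\mathfrak{m}_0 \in Pr(X)$ and any fixed generalized Banach limit $\LIM$, a unique invariant measure $\mathfrak{m} \in Pr(X)$ with $\mathrm{supp}(\mathfrak{m}) \subseteq \mathcal{A}$ satisfying \eqref{eq:WeakErgodicAve_NDDE} for all $\varphi \in C_b(X)$. In the particular case $\mathfrak{m}_0 = \delta_\phi$, the right-hand side of \eqref{eq:WeakErgodicAve_NDDE} collapses to $\LIM \tfrac1T\int_0^T \varphi(S_{D,f}(t)\phi)\,\dr t$, so applying Theorem~\ref{thm:Prop1} with $x_0 = \phi$ upgrades the identity to hold for every $\varphi \in C(X)$.

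I do not anticipate a genuine obstacle: essentially all of the analytic work has been packaged into Lemma~\ref{Lem_decomp_NDDE}, and the only thing that needs care is checking that the decomposition it supplies matches, term by term, the splitting hypothesis of Proposition~\ref{thm:AttractorCriteria} — in particular that the compactness of $U_D(t)$ holds for \emph{every} $t>0$, which is exactly what is meant by ``$\{U_D(t)\}_{t\geq 0}$ is a compact semigroup'' in the statement of that lemma. One should also note that the absorbing set $\mathfrak{B}$ is bounded, which is part of the standing hypothesis; with those two points observed, the theorem follows as a clean corollary of the machinery already assembled.
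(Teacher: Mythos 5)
Your argument is correct and is essentially identical to the paper's own proof, which likewise obtains the attractor by feeding the splitting of Lemma~\ref{Lem_decomp_NDDE} into Proposition~\ref{thm:AttractorCriteria} and then invokes Theorems~\ref{thm:Prop1} and \ref{thm:Prop2} for the invariant-measure statements. Your write-up simply makes explicit the routine verifications (separability of $X$, boundedness of $\mathfrak{B}$, matching the two terms of the decomposition to the hypotheses) that the paper leaves to the reader.
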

\begin{proof}
The result follow as an obvious consequence of Lemma \ref{Lem_decomp_NDDE},  Proposition \ref{thm:AttractorCriteria}, Theorem \ref{thm:Prop1} and Theorem \ref{thm:Prop2}.
\end{proof}

\subsubsection{Invariant measures for NDDEs}\label{Sec_resultsNDDE}

In view of its generality, Theorem~\ref{Thm_existence_attractor}
provides a powerful tool that may be employed to establish
the existence of a global attractor and hence of invariant probability measures for particular NDDEs.

Of course, in practice, we need to verify the
condition $\mathfrak{R}_{A_0}<0$, which ensures the splitting of the semigroup generated by the NDDE of interest,
and furthermore to establish the existence of a bounded absorbing
set. We next provide below a useful criterium for the verification of this
latter dissipativity condition.  This criteria is satisfied for a special but
still rather broad class of NDDEs. The
verification of $\mathfrak{R}_{A_0}<0$ has been and is still an
intensive topic of research. Many criterion exist depending on the
situation of interest; see e.g. \cite{Michiels_Niculescu}. In what follows we
will need only the Schur-Cohn criterion, see e.g.
\cite{Niculescu_stab}.

\begin{Lem}\label{Lem_dissip}
Let $\tau$ be a positive constant, and $B$ a $n\times n$ matrix with real entries. Let $g$ be in $C^1(\mathbb{R}^n\times \mathbb{R}^n;\mathbb{R}^n)$.
Consider the neutral delay differential equation given by
\be
\label{Eq_NDDE_dissip}
\frac{d}{dt}\big(x(t)-B
x(t-\tau)\big)=g(x(t),x(t-\tau)),  \mbox{ for } t\geq 0,
\ee
on the
phase space $X:=C([-\tau,0], \mathbb{R}^n)$.

Assume that  there exist $\alpha, \gamma >0$, $\beta \in \mathbb{R}$
such that \be \label{Dissip_g} \langle u-Bv, g(u,v)  \rangle \leq
\gamma -\alpha \vert u\vert^2 +\beta \vert v\vert^2, \;\forall\; u,v
\in \mathbb{R}^n, \ee and that,

\be \label{Dissip_cond} \mathfrak{C}:=\|B\| + \sqrt{(1+\|B\|)^2
e^{-\alpha \tau} + 2(\beta +\alpha \|B\|^2) \frac{1-e^{-\alpha
\tau}}{\alpha}}  <1. \ee Then the solutions of
\eqref{Eq_NDDE_dissip} are global in time, and the ball $B(0, 2  r
\sqrt{\sum_{k=0}^{\infty} \mathfrak{C}^k})$ in $X=C([-\tau,0],
\mathbb{R}^n)$  with $r=\sqrt{\frac{2 \gamma
(1-e^{-\alpha\tau})}{\alpha}}$,  is absorbing for the continuous
semigroup generated by \eqref{Eq_NDDE_dissip} on $C([-\tau,0],
\mathbb{R}^n)$.
\end{Lem}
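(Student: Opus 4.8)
The plan is to control the quantity $y(t):=x(t)-B\,x(t-\tau)=D_0x_t$, which by Definition~\ref{Def_solNDDE} is of class $C^1$ with $\dot y(t)=g(x(t),x(t-\tau))$, and to extract from it a recursive a priori bound on the sup-norm of $x$ over consecutive intervals of length $\tau$. Note first that \eqref{Dissip_cond} forces $\|B\|<1$, since the square root there is nonnegative. Local existence and uniqueness of $x_t(\cdot;\phi)$, together with the blow-up alternative ``$\alpha_\phi<\infty\Rightarrow|x_t|_\infty\to\infty$'', are supplied by Proposition~\ref{Prop_existenceNDDE}, applied with $D=D_0$ (which is of the form \eqref{Eq_D_isnonatomic}, the only atom of the associated measure being at $-\tau\neq 0$) and with the $C^1$ map $\phi\mapsto g(\phi(0),\phi(-\tau))$. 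Hence it suffices to produce a bound on $|x_t|_\infty$ that is finite on each finite time interval (whence $\alpha_\phi=\infty$, i.e., global existence) and that is eventually uniform over bounded sets of initial data (whence the absorbing ball).

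For the a priori estimate, apply the dissipativity hypothesis \eqref{Dissip_g} with $u=x(t)$ and $v=x(t-\tau)$, so that $u-Bv=y(t)$; this gives
\[
  \frac{d}{dt}|y(t)|^2=2\langle y(t),g(x(t),x(t-\tau))\rangle\le 2\gamma-2\alpha|x(t)|^2+2\beta|x(t-\tau)|^2 .
\]
Using $|x(t)|^2=|y(t)+Bx(t-\tau)|^2\ge\tfrac12|y(t)|^2-\|B\|^2|x(t-\tau)|^2$ to absorb the term $-2\alpha|x(t)|^2$, one obtains the closed differential inequality
\[
  \frac{d}{dt}|y(t)|^2+\alpha|y(t)|^2\le 2\gamma+2\bigl(\beta+\alpha\|B\|^2\bigr)|x(t-\tau)|^2 .
\]
Set $I_k:=[(k-1)\tau,k\tau]$ and $a_k:=\sup_{I_k}|x|$, so $a_0=|\phi|_\infty$. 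On $I_{k+1}$ one has $|x(t-\tau)|\le a_k$, so Gronwall's inequality yields, for $t\in I_{k+1}$,
\[
  |y(t)|^2\le e^{-\alpha(t-k\tau)}|y(k\tau)|^2+\frac{1-e^{-\alpha(t-k\tau)}}{\alpha}\Bigl(2\gamma+2(\beta+\alpha\|B\|^2)a_k^2\Bigr).
\]
Since $|y(k\tau)|\le(1+\|B\|)a_k$, and recognising the combinations $r^2=\tfrac{2\gamma(1-e^{-\alpha\tau})}{\alpha}$ and $(\mathfrak{C}-\|B\|)^2=(1+\|B\|)^2e^{-\alpha\tau}+\tfrac{2(\beta+\alpha\|B\|^2)(1-e^{-\alpha\tau})}{\alpha}$, taking the supremum over $I_{k+1}$ gives $\sup_{I_{k+1}}|y|^2\le(\mathfrak{C}-\|B\|)^2a_k^2+r^2$ in the regime where the majorant above is nondecreasing on $I_{k+1}$; in the complementary case the majorant is nonincreasing on $I_{k+1}$ and $\sup_{I_{k+1}}|y|\le\sup_{I_k}|y|$, which only helps. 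Finally, inverting the difference operator via $x(t)=y(t)+Bx(t-\tau)$ gives $a_{k+1}\le\sup_{I_{k+1}}|y|+\|B\|a_k$; combining with the previous bound---squaring and applying Young's inequality with a weight tuned (using $\|B\|<1$) so that the coefficient of $a_k^2$ remains $<1$---one reaches a recursion of the form $a_{k+1}^2\le\mathfrak{C}\,a_k^2+C\,r^2$ for an explicit constant $C$.

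Iterating gives $a_k^2\le\mathfrak{C}^k a_0^2+C r^2\sum_{j=0}^{k-1}\mathfrak{C}^j$; since $\mathfrak{C}<1$ this is finite for every $k$ and, on each finite range of $k$, bounded in terms of $|\phi|_\infty$, so (as $|x_t|_\infty\le\max(a_k,a_{k+1})$ for $t\in I_{k+1}$) the solution cannot blow up in finite time, i.e.\ $\alpha_\phi=\infty$. Moreover $\limsup_{k\to\infty}a_k$ is bounded by a data-independent multiple of $r\sqrt{\sum_{j\ge 0}\mathfrak{C}^j}$, and feeding this back through $|x_t|_\infty\le\max(a_k,a_{k+1})$ shows that every bounded subset of $X=C([-\tau,0],\mathbb{R}^n)$ is absorbed in finite time into the ball $B\bigl(0,\,2r\sqrt{\sum_{k\ge 0}\mathfrak{C}^k}\,\bigr)$, the factor $2$ and the square-root form providing the slack for the passage between $a_k$ and $|x_t|_\infty$ and for the constant $C$ above. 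The step I expect to be the main obstacle is precisely obtaining the recursion with the sharp threshold $\mathfrak{C}<1$: because $\{S_{D,f}(t)\}$ has no smoothing effect (it has no compact absorbing set), one cannot get decay for $x$ directly and must propagate sup-norm bounds one delay interval at a time---which forces control of $\sup_{I_{k+1}}|y|$ over the whole interval rather than only at its endpoints---and one must invert $D_0$ with the sharp constant $\|B\|$ and combine the $e^{-\alpha\tau}$- and $\|B\|$-contributions in exactly the right proportion; a cruder argument yields only a strictly more restrictive smallness condition on $\|B\|$. Once \eqref{Eq_NDDE_dissip} is known to possess a bounded absorbing set, the existence of a global attractor and of the associated invariant measures follows from Theorem~\ref{Thm_existence_attractor}, since $\|B\|<1$ also guarantees the stability of $D_0$ required there.
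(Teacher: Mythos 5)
Your proposal is correct and follows essentially the same route as the paper: the energy identity for $|D_0x_t|^2$ via \eqref{Dissip_g}, the absorption $|x(t)|^2\geq\tfrac{1}{2}|y(t)|^2-\|B\|^2|x(t-\tau)|^2$, Gronwall over delay intervals, inversion of $D_0$ through $|x(t)|\leq|y(t)|+\|B\|\,|x(t-\tau)|$, and iteration of the contraction with the same constant $\mathfrak{C}<1$. The one organizational difference is that the paper applies the Gronwall bound over the \emph{sliding} window $[t-\tau,t]$ for each individual $t>\tau$, so the estimate is read off at the right endpoint with decay factor exactly $e^{-\alpha\tau}$ and the subadditivity of the square root yields the linear recursion $|x(t)|\leq\mathfrak{C}\,|x_{t-\tau}|_\infty+r$ directly; by anchoring Gronwall at the fixed grid point $k\tau$ and then taking a supremum of the majorant over $I_{k+1}$, you are forced into the monotone/non-monotone case analysis, and in the nonincreasing case the bound $\sup_{I_{k+1}}|y|^2\leq(\mathfrak{C}-\|B\|)^2a_k^2+r^2$ does not hold as stated—you only get $\sup_{I_{k+1}}|y|\leq|y(k\tau)|$, so closing the recursion requires tracking the endpoint values $|y(k\tau)|$ (which do satisfy $|y(k\tau)|^2\leq(\mathfrak{C}-\|B\|)^2a_{k-1}^2+r^2$) alongside the $a_k$, leading to a two-step recursion $a_{k+1}\leq\mathfrak{C}\max(a_{k-1},a_k)+r$. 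This is repairable and you flag it, but the sliding-window version eliminates both this case analysis and the subsequent squaring-plus-Young detour, whose constant $C$ degenerates as $\mathfrak{C}\to1$.
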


\begin{proof}
Equation \eqref{Eq_NDDE_dissip} may be written in the functional
form $\frac{d}{dt}D_0 x_t= f(x_t)$ with $D_0\phi :=
\phi(0)-B\phi(-\tau)$ and $f(\phi):=g(\phi(0),\phi(-\tau))$ for any
$\phi\in X,$ and since $g\in C^1(\mathbb{R}^n\times
\mathbb{R}^n;\mathbb{R}^n),$ we have that $f \in
C^1(X,\mathbb{R}^n)$. The
local existence and uniqueness of solutions up to a maximal
time $\alpha_\phi$
\eqref{Eq_NDDE_dissip} is ensured by Proposition
\ref{Prop_existenceNDDE}. From the same proposition, in order to
conclude to the global existence of solutions of \eqref{Eq_NDDE_dissip},
i.e. to show  that $\alpha_\phi = \infty$ for every $\phi \in X$,
it suffices to show that for any
$\phi$ and for any $T >0$,
$\sup_{t \in [0, \min\{T, \alpha_\phi\})} |x_t(\cdot;\varphi)|_{\infty} < \infty$.
The estimates provided below for the existence of an
absorbing ball contain such an argument implicitly
and existence of a continuous semigroup associated to
\eqref{Eq_NDDE_dissip}
follows from
Proposition \ref{Prop_existenceNDDE}.
We turn next to these estimates.

We first show that, for any $0 \leq t_1 <t_2$,
\bea \label{Ineq_gronwall}
\vert x(t_2) -Bx(t_2-\tau)\vert^2  \leq & e^{-\alpha (t_2-t_1)} \vert x(t_1) -Bx(t_1-\tau)\vert^2 +2\frac{\gamma}{\alpha} (1-e^{-\alpha (t_2-t_1)})+\\
& 2 (\beta +  \alpha \|B\|^2) \underset{s \in [t_1-\tau, t_2-\tau]}{\sup} \vert x(s) \vert^2 (1-e^{-\alpha (t_2-t_1)})/\alpha.
\eea
This inequality may be derived by considering the quantity $u(t):=\vert x(t) -Bx(t-\tau)\vert^2=\langle x(t) -Bx(t-\tau), x(t) -Bx(t-\tau) \rangle$.  Since
$D_0 x \in C^1([0, \infty), \R^n)$, $u$ is differentiable we find with \eqref{Dissip_cond} that
$$ u'(t) \leq 2 (\gamma -\alpha \vert x(t) \vert^2 +\beta \vert x(t - \tau)\vert^2),$$
Now since $2 \vert x(t) \vert^2 \geq u(t)-2 \vert B x(t-\tau)\vert^2$, we infer
$$ u'(t)\leq 2 \gamma  -\alpha u(t) + 2(\beta +  \alpha \|B\|^2) \vert  x(t-\tau)\vert^2, \quad \mbox{ for } t>0.$$
The inequality \eqref{Ineq_gronwall}  is then easily derived by multiplying this last inequality by $e^{\alpha t}$ and integrating  between $t_1$ and $t_2$.

Let us take now $t$ to be in $(0,\tau].$ We use  \eqref{Ineq_gronwall} with $t_1=0$ and $t_2=t,$ it follows that,
$$
 \vert x(t) -Bx(t-\tau)\vert^2 \leq e^{-\alpha t} \vert x(0) -Bx(-\tau)\vert^2 +2\frac{\gamma}{\alpha} (1-e^{-\alpha t})+
 2 (\beta +  \alpha \|B\|^2) \vert \phi \vert_{\infty}^2 \frac{(1-e^{-\alpha t})}{\alpha},
$$
and since $\vert x(0) -Bx(-\tau)\vert^2 \leq (1+\|B\|)^2 \vert \phi \vert_{\infty}^2,$ we get
$$
\vert x(t) -Bx(t-\tau)\vert^2  \leq \Big[(1+\|B\|)^2+2 (\beta +  \alpha \|B\|^2)  \frac{(1-e^{-\alpha \tau })}{\alpha}\Big]\vert \phi \vert_{\infty}^2
+2\frac{\gamma}{\alpha} (1-e^{-\alpha \tau}),
$$
which gives finally,
\be\label{eq:basecaseIndAbs}
\vert x(t) \vert \leq \mathfrak{C}_0 \vert \phi \vert_{\infty} + r, \quad \mbox{ for } t\in (0,\tau],
\ee
with $\mathfrak{C}_0:=\sqrt{(1+\|B\|)^2+2 (\beta +  \alpha \|B\|^2)  \frac{(1-e^{-\alpha \tau })} {\alpha}}+\|B\|,$ and $r=\sqrt{2\frac{\gamma}{\alpha} (1-e^{-\alpha \tau})}.$
For $t>\tau$ we take $t_1=t-\tau$ and $t_2=t$ in \eqref{Ineq_gronwall}.  By arguing in similar manner to the previous case we obtain:
\bea\label{eq:indStepIndAbs}
\vert x(t) -Bx(t-\tau)\vert^2  \leq \Big[(1+\|B\|)^2e^{-\alpha \tau}+2 (\beta +  \alpha \|B\|^2)  \frac{(1-e^{-\alpha \tau })}{\alpha}\Big]\vert x_{t-\tau} \vert_{\infty}^2 +r^2,
\eea
which leads to,
\be
 \vert x(t) \vert \leq \mathfrak{C} \vert x_{t-\tau} \vert_{\infty} +r, \quad \mbox{ for } t> \tau,
 \ee
 with $\mathfrak{C}$ given in \eqref{Dissip_cond}. We infer from \eqref{eq:basecaseIndAbs}, \eqref{eq:indStepIndAbs} and a simple induction that,
 for every $k \geq 0$,
  \begin{equation}\label{eq:SimpleDispInduct}
   \vert x(t) \vert \leq \mathfrak{C}^k \mathfrak{C_0} \vert \phi \vert_{\infty} +r \sum_{j=0}^{k} \mathfrak{C}^j, \; \mbox{ for } t\in(k\tau,(k+1)\tau].
 \end{equation}
 The existence of the bounded absorbing set for the semigroup generated by \eqref{Eq_NDDE_dissip} therefore follows, completing the proof.
 \end{proof}

Note that the dissipation condition \eqref{Dissip_cond}  is
valid for ``large'' delays provided that $2\beta<{\alpha}$ and
$\|B\|$ is appropriately chosen with respect to
$\frac{\beta}{\alpha}$.   More precisely we have
the following:

\begin{Lem}\label{Lem_analysisC}
(Dissipation for large delays) Suppose that $2\beta<{\alpha}$ and  $\| B\|<
\sqrt{2(1-\frac{\beta}{\alpha})} -1$. Then \eqref{Dissip_cond} holds
for all $\tau > \tau^*$, with
$\tau^{*}=-\frac{1}{\alpha}\log \left(\frac{2(1-\frac{\beta}{\alpha}) - (\| B\|+1)^{2} }{2(1-\frac{\beta}{\alpha}) - (\| B\|-1)^{2}}\right)$.
\end{Lem}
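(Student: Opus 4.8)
The plan is to reduce the inequality \eqref{Dissip_cond} to an elementary monotonicity statement about an affine function of $e^{-\alpha\tau}$. Write $b := \|B\|$ and substitute $x := e^{-\alpha\tau}\in(0,1)$. Note that the hypotheses are only of interest when $b<1$ (otherwise $\mathfrak{C}\geq b\geq 1$), which we henceforth assume; this is automatic when $\beta\geq 0$, since then $b < \sqrt{2(1-\beta/\alpha)}-1 \leq \sqrt{2}-1 < 1$. Granting $b<1$, isolating the square root in \eqref{Dissip_cond} and squaring (both sides being nonnegative) shows that \eqref{Dissip_cond} is equivalent to
\[
  h(x) := (1+b)^2\, x + \Bigl(\tfrac{2\beta}{\alpha} + 2b^2\Bigr)(1-x) < (1-b)^2 .
\]

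First I would note that $h$ is affine in $x$ with slope $(1+b)^2 - \tfrac{2\beta}{\alpha} - 2b^2 = 2 - (1-b)^2 - \tfrac{2\beta}{\alpha}$, which is strictly positive under the standing hypotheses since $(1-b)^2\leq 1$ and $2\beta<\alpha$; hence $h$ is strictly increasing. At the endpoints, $h(1)=(1+b)^2\geq (1-b)^2$ (so \eqref{Dissip_cond} genuinely fails for $\tau$ near $0$), while a direct rearrangement gives
\[
  h(0) = \tfrac{2\beta}{\alpha} + 2b^2 < (1-b)^2 \iff (1+b)^2 < 2\bigl(1-\tfrac{\beta}{\alpha}\bigr) \iff b < \sqrt{2(1-\beta/\alpha)} - 1,
\]
which is exactly the hypothesis. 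Consequently there is a unique $x^*\in(0,1]$ with $h(x^*)=(1-b)^2$, and solving this affine equation yields
\[
  x^* = \frac{(1-b)^2 - \tfrac{2\beta}{\alpha} - 2b^2}{\,2 - (1-b)^2 - \tfrac{2\beta}{\alpha}\,}
      = \frac{2\bigl(1-\tfrac{\beta}{\alpha}\bigr) - (\|B\|+1)^2}{\,2\bigl(1-\tfrac{\beta}{\alpha}\bigr) - (\|B\|-1)^2\,},
\]
so that $x^* = e^{-\alpha\tau^*}$ with $\tau^*$ precisely the quantity in the statement. Since $h$ is increasing while $\tau\mapsto e^{-\alpha\tau}$ is decreasing, for every $\tau>\tau^*$ we get $e^{-\alpha\tau}<x^*$, hence $h(e^{-\alpha\tau}) < h(x^*) = (1-b)^2$, hence $\sqrt{h(e^{-\alpha\tau})} < 1-b$, and therefore $\mathfrak{C} = b + \sqrt{h(e^{-\alpha\tau})} < 1$, which is \eqref{Dissip_cond}.

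There is no substantive obstacle here: the entire argument rests on the single observation that squaring \eqref{Dissip_cond} produces an affine function of $e^{-\alpha\tau}$, after which monotonicity does the rest. The only care needed is bookkeeping — checking that the denominator $2(1-\beta/\alpha) - (\|B\|-1)^2$ equals $1 + 2b - b^2 - \tfrac{2\beta}{\alpha} > 0$ (again since $0\leq b<1$ and $2\beta<\alpha$), so that $\tau^*$ is a well-defined real number; that $x^*\in(0,1)$ when $b>0$ (numerator positive by hypothesis, and numerator $<$ denominator because $(b+1)^2 > (b-1)^2$); and handling the harmless degenerate case $b=0$, where $x^*=1$, $\tau^*=0$, and $\mathfrak{C}<1$ holds for all $\tau>0$.
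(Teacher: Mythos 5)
Your proof is correct and follows essentially the same route as the paper: both isolate the square root in \eqref{Dissip_cond}, square, and reduce to a linear inequality in $e^{-\alpha\tau}$ whose coefficient and right-hand side are exactly the paper's $P(\|B\|)$ and $P(\|B\|+2)$, with the hypothesis on $\|B\|$ ensuring the threshold value lies in $(0,1]$. Your explicit remarks on needing $\|B\|<1$ for the squaring step and on positivity of the denominator are slightly more careful than the paper's "direct manipulations," but the argument is the same.
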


\begin{proof}
Direct manipulations show that \eqref{Dissip_cond} is equivalent to:
\bea\label{Eq_dissip_poly} 
P(\| B\|) e^{-\alpha \tau} < P( \| B \| +
2), 
\eea 
where $P$ is the polynomial function given by
$P(x)=-[(x-1)^2 +2(\frac{\beta}{\alpha}-1)]$. 
Under the assumed conditions
on $\beta$ and $\alpha$,
$$
\sqrt{2\left(1-\frac{\beta}{\alpha}\right)}> 1
$$
and $P(x+2)$ possesses two real distinct roots, namely $x_1 = -1 -  \sqrt{2(1-\frac{\beta}{\alpha})}< 0$, $x_2 = -1 + \sqrt{2(1-\frac{\beta}{\alpha})}> 0$
and reaches its maximum at $-1$. As such, for all $x \in \left[0, \sqrt{2(1-\frac{\beta}{\alpha})} -1\right)$,
it is clear that $P(x), P(x+2) > 0$ and also that $P(x) -P(x+2)$ is strictly increasing and non-negative.
Since, by assumption, $\|B\|$ falls in this range, \eqref{Eq_dissip_poly} holds for all $\tau > \tau^{*}$, completing the proof.
\end{proof}

 We are now in position to prove our main theorem about the existence of invariant probability measures for
 NDDEs.

 \begin{Thm}\label{THM_IM_NDDE}
 Let $\tau$ be a positive constant, and $B$ a $n\times n$ matrix with real entries and let $g$ be in $C^1(\mathbb{R}^n\times \mathbb{R}^n;\mathbb{R}^n)$. Consider the neutral delay differential equation given by
\be \label{Eq_NDDE_dissip_bis} \frac{d}{dt}\big(x(t)-B
x(t-\tau)\big)=g(x(t),x(t-\tau)),  \mbox{ for } t\geq 0, \ee  on the
phase space $X:=C([-\tau,0], \mathbb{R}^n).$
Assume that the dissipation conditions  \eqref{Dissip_g} and
\eqref{Dissip_cond} are satisfied. Then, the continuous semigroup $\{ S(t)\}_{t\geq 0}$
acting on $X$ which is generated by \eqref{Eq_NDDE_dissip} possesses a global attractor $\mathcal{A}$
and the results in Theorems \ref{thm:Prop1}, \ref{thm:Prop2}
hold for $\{S(t)\}_{t \geq 0}$.
\end{Thm}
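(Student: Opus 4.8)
The statement is essentially a corollary of the machinery already assembled, so the proof is a matter of verifying hypotheses. First I would recast \eqref{Eq_NDDE_dissip_bis} in the functional form $\tfrac{d}{dt}D_0 x_t = f(x_t)$ with $D_0\phi := \phi(0) - B\phi(-\tau)$ and $f(\phi) := g(\phi(0),\phi(-\tau))$, exactly as at the start of the proof of Lemma~\ref{Lem_dissip}; since $g \in C^1(\mathbb{R}^n\times\mathbb{R}^n;\mathbb{R}^n)$ one has $f \in C^1(X,\mathbb{R}^n)$, so Proposition~\ref{Prop_existenceNDDE} supplies local well-posedness. Applying Lemma~\ref{Lem_dissip} under the assumed dissipation conditions \eqref{Dissip_g} and \eqref{Dissip_cond} then yields two things at once: every solution is global in time, so that $\{S(t)\}_{t\ge 0}$ is genuinely a continuous semigroup on $X = C([-\tau,0],\mathbb{R}^n)$ in the sense of Proposition~\ref{Prop_existenceNDDE}; and there exists a bounded absorbing set $\mathfrak{B}\subset X$, namely the ball of radius $2r\sqrt{\sum_{k\ge 0}\mathfrak{C}^k}$ with $r = \sqrt{2\gamma(1-e^{-\alpha\tau})/\alpha}$.

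Next I would check that the difference operator $D_0$ is \emph{stable} in the sense required by Lemma~\ref{Lem_decomp_NDDE} and Theorem~\ref{Thm_existence_attractor}. Here $D$ coincides with its principal part $D_0$ and has the special form \eqref{Eq_newD} with a single delay ($N=1$, $\tau_1=\tau$, $B_1=B$) and $M\equiv 0$, so the only point to verify is $\mathfrak{R}_{A_0}<0$, where by \eqref{eq_stabilityofAo} one has $\mathfrak{R}_{A_0} = \sup\{\operatorname{Re}\lambda : \det(I - Be^{-\lambda\tau})=0\}$. Factoring through the eigenvalues $\mu_j$ of $B$ gives $\det(I - Be^{-\lambda\tau}) = \prod_j(1-\mu_j e^{-\lambda\tau})$, which vanishes precisely when $e^{\lambda\tau}=\mu_j$ for some $j$; hence $\mathfrak{R}_{A_0} = \tau^{-1}\log\rho(B)$, with $\rho(B)$ the spectral radius of $B$ (and $\mathfrak{R}_{A_0}=-\infty$ when $B$ is nilpotent). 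But \eqref{Dissip_cond} forces $\|B\| \le \mathfrak{C} < 1$, since the square-root term in the definition of $\mathfrak{C}$ is real and non-negative; therefore $\rho(B)\le\|B\|<1$ and $\mathfrak{R}_{A_0}<0$, i.e. $D_0$ is stable.

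Finally, with the absorbing set $\mathfrak{B}$ in hand and $D_0$ stable, Theorem~\ref{Thm_existence_attractor} — equivalently, the decomposition of Lemma~\ref{Lem_decomp_NDDE} combined with Proposition~\ref{thm:AttractorCriteria} — shows that $\{S(t)\}_{t\ge 0}$ possesses a global attractor $\mathcal{A}=\omega(\mathfrak{B})$. Since $X = C([-\tau,0],\mathbb{R}^n)$ endowed with the supremum norm is a complete and separable metric space and $\{S(t)\}_{t\ge 0}$ is a continuous semigroup with a global attractor, Theorems~\ref{thm:Prop1} and \ref{thm:Prop2} apply verbatim, which is the assertion. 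There is no genuine obstacle here; the one point deserving care is the identification $\mathfrak{R}_{A_0}=\tau^{-1}\log\rho(B)$ together with the observation that \eqref{Dissip_cond} already entails $\|B\|<1$ (so the $D_0$-stability hypothesis of the decomposition lemma is automatic), plus the routine bookkeeping that $D_0$ fits the template \eqref{Eq_newD} and that $X$ is separable, so that the abstract results of Section~\ref{sec:MainResults} are applicable.
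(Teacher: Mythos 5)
Your proposal is correct and follows essentially the same route as the paper: Lemma~\ref{Lem_dissip} for global existence and the absorbing ball, stability of $D_0$ from $\rho(B)\leq\|B\|\leq\mathfrak{C}<1$, and then Theorem~\ref{Thm_existence_attractor} together with Theorems~\ref{thm:Prop1}, \ref{thm:Prop2}. The only (harmless) difference is that you derive $\mathfrak{R}_{A_0}=\tau^{-1}\log\rho(B)$ by factoring $\det(I-Be^{-\lambda\tau})$ through the eigenvalues of $B$, where the paper simply cites the Schur--Cohn criterion.
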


 \begin{proof}
Keeping the previous notations,
 we note that for all $\phi \in X,$ $D_0\phi=\phi(0)-B\phi(-\tau)$ in this case.  By Lemma \ref{Lem_dissip},
 we infer the existence of an absorbing set for $\{S(t)\}_{t \geq 0}$, and since \eqref{Eq_NDDE_dissip} is
 a particular form of the type of NFDEs handled by Lemma \ref{Lem_decomp_NDDE},
 to apply Theorem~\ref{Thm_existence_attractor} and hence to infer all of the desired
 results we need now only check that  $\mathfrak{R}_{A_0} <0,$ where $\mathfrak{R}_{A_0}$
 reduces simply here to $\sup\big\{\mbox{Re} (\lambda)\;:\;
\mbox{det}\Big[I-B  e^{-\lambda \tau} \Big]=0\big\}$. It is well
known that in such a case $D_0$ is stable (independently of the delay
$\tau$) if and only if  $\rho(B)<1$  where $\rho(B)$ is the spectral
radius of $B$.
This is the so-called Schur-Cohn condition; see
\cite{Niculescu_stab}.  This last condition is guaranteed by the imposed
dissipation condition \eqref{Dissip_cond} since $\|B\|<1$ necessarily, and
$\rho(B)\leq\|B\|$ trivially. The proof is therefore complete.
\end{proof}

\begin{Rmk}
Note that this theorem may be extended to NDDEs of the form $\frac{d}{dt}D_0x_t=G(x_t)$ with $D_0$ as given in \eqref{Eq_newD} and $G$ being the functional representation associated to $g$ in \eqref{Eq_NDDE_dissip}. The  dissipation estimates and the stability criteria are however more involved. We leave such a generalization of Theorem \ref{THM_IM_NDDE}
to the interested reader.
\end{Rmk}

\subsubsection{Application to a nonlinear Brayton-Miranker-like model}\label{Sec_resultsNDDE}

We now return to a particular system of NDDEs, \eqref{Eq_wavesfunction3}, arising from the transmission line problem discussed above
in Section~\ref{Sec_NDDE_HyperPDEs}. 
Our goal is to provide conditions under which  \eqref{Eq_wavesfunction3} exhibits a global attractor 
and hence to infer the existence of invariant measures for this example.  
In view of Theorem \ref{THM_IM_NDDE} we are left with the verification of the dissipation conditions  \eqref{Dissip_g} and
\eqref{Dissip_cond} for this system which we establish below.

Let $0<m,q<1;$ $p\in \mathbb{R}$, and let $b$ and $c$ be strictly positive real numbers. 
Let us introduce the notations $\Phi(t):=(\phi_1(t),\phi_2(t))^T, $ and ${\bf v}=(v_1,v_2)$  and ${\bf u}=(u_1,u_2).$ 
We use here bold typeface to distinguish vectors in $\mathbb{R}^2$
from real numbers. We warn the reader that the symbol $\vert \cdot\vert$ will be used both for the absolute
value of real numbers and for the Euclidean vector norm in $\R^{2}$.

Let us consider  $F_1,\; F_2:\mathbb{R}^2\times
\mathbb{R}^2\rightarrow \mathbb{R}$ two $C^1$ functions which
satisfy respectively that there exist $\alpha'>0$, $\gamma_1\geq 0$,
$M'_1\geq 0$ and $M_1>0$ such that   for all $({\bf u},{\bf
v})\in \mathbb{R}^2\times \mathbb{R}^2$,
\begin{equation}\label{Eq_NDDE_NonLin}
  \begin{split}
 u_1F_1({\bf u},{\bf v}) &\leq -\alpha'u_1^2+\gamma_1,\\
\vert v_2F_1({\bf u},{\bf v}) \vert &\leq M_1 u_1+M_1'v_2,
\end{split}
\end{equation}
and that there exist $\gamma_2\geq 0$, $M'_2\geq 0$ and $M_2>0$ such
that for all $({\bf u},{\bf v})\in \mathbb{R}^2\times
\mathbb{R}^2$,
\begin{equation}\label{Eq_NDDE_NonLin2}
  \begin{split}
u_2F_2({\bf u},{\bf v}) &\leq -\alpha' u_2^2+\gamma_2,\\
\vert v_1F_2({\bf u},{\bf v}) \vert &\leq M_2 u_2+M_2'v_1.
\end{split}
\end{equation}
For instance $F_i({\bf u},{\bf v})=-\alpha_i u_i(1+\vert {\bf v}\vert^2)^{-1}$ verifies the conditions \eqref{Eq_NDDE_NonLin} for $i=1$ and \eqref{Eq_NDDE_NonLin2} for $i=2$.

We consider now the following system of NDDEs:

 \bea \label{Eq_wavesfunction3}
\frac{d}{dt}\Big(\begin{bmatrix} \phi_1(t) \\ \phi_2(t) \end{bmatrix}-\begin{bmatrix} 0 & q\\ m & 0 \end{bmatrix} \begin{bmatrix} \phi_1(t-\tau) \\ \phi_2(t-\tau) \end{bmatrix}\Big)=\begin{bmatrix} p \\ 0 \end{bmatrix} -\begin{bmatrix} b\phi_1(t)\\ c\phi_2(t)\end{bmatrix}+\begin{bmatrix} F_1(\Phi(t),\Phi(t-\tau))\\F_2(\Phi(t),\Phi(t-\tau)) \end{bmatrix},\\
 \eea

 We introduce furthermore $k:=\max(m,q),$ which is the norm of the matrix arising in the LHS of $\eqref{Eq_wavesfunction3}$ which has to be strictly less than 1 
in view of  Lemma~\ref{Lem_analysisC}, which explains the constraints imposed on $m$ and $q$.

The condition  \eqref{Dissip_g} to satisfy here can be written as:

$$ N(u,v):=(u_1-qv_2) \Big( p-bu_1+ F_1({\bf u},{\bf v})\Big)+(u_2-mv_1)\Big(-cu_2+  F_2({\bf u},{\bf v})\Big)\leq \gamma -\alpha\vert {\bf u}\vert^2 +\beta \vert {\bf v}\vert^2,$$
with $\alpha>0, \beta\in\mathbb{R}$ and $\gamma>0$ to find independently of ${\bf u},{\bf v}
\in \mathbb{R}^2$.

An easy computation shows that,
$$ N(u,v)\leq \gamma_1 +\gamma_2 + pu_1+\vert pqv_2 \vert -bu_1^2-c u_2^2 -\alpha'  \vert {\bf u} \vert^2 +M_1 u_1+M_1'v_1+ M_2 u_2+M_2'v_2 +cm v_1 u_2 +bq u_1 v_2,$$
now by applying the Young inequality on the two last terms and  the $\epsilon$-Young inequality appropriately on the rest of the terms concerned, it is easy to show that for any $\epsilon>0$ there exists $\gamma_{\epsilon}>0$ such that,
$$ N(u,v)\leq  \gamma_{\epsilon}-\big( \frac{1}{2}\min(b,c) +\alpha'-\epsilon\big)\vert u\vert ^2 +\big(\frac{1}{2}\max(b,c)+\epsilon\big)\vert v\vert ^2,$$
which shows that \eqref{Dissip_g} is satisfied with $B=\begin{bmatrix} 0 & q\\ m & 0 \end{bmatrix}$ and $g({\bf u},{\bf v})=(p-bu_1+ F_1({\bf u},{\bf v}), -cu_2+  F_2({\bf u},{\bf v}))^T.$

Let us introduce $\alpha_{\epsilon}:=\frac{1}{2}\min(b,c)+\alpha'-\epsilon$  and $\beta_{\epsilon}:=\frac{1}{2}\max(b,c)+\epsilon.$
 Assume now that  $\alpha'$ is such that $\max(b,c) <(\frac{1}{2}\min(b,c)+\alpha')$. Let $\epsilon>0$ be fixed sufficiently small such that $2 \beta_{\epsilon} < \alpha_{\epsilon}$.
Now if we assume furthermore that
$k<-1+\sqrt{2(1-\frac{\beta_{\epsilon}}{\alpha_{\epsilon}})}$, then
from Lemma \ref{Lem_analysisC} we deduce that there exists $\tau^*$,
such that  \eqref{Dissip_cond} is satisfied for $\tau>
\tau^*$.

By using now Theorem \ref{THM_IM_NDDE}  we have thus proved the following proposition which constitutes,
to the best of the authors' knowledge, the first result regarding existence of invariant measures for
Brayton-Miranker-like models such as \eqref{Eq_wavesfunction3}.

\begin{Prop}
Assume that $\alpha'$ arising in \eqref{Eq_NDDE_NonLin} and \eqref{Eq_NDDE_NonLin2} is such that $\max(b,c) <(\frac{1}{2}\min(b,c)+\alpha')$.
For some $\epsilon>0$ such that  $2 \beta_{\epsilon} < \alpha_{\epsilon},$ assume furthermore that  the coefficients $q$ and $m$ of the matrix arising in \eqref{Eq_wavesfunction3} satisfy:
$$
\max(q,m)<-1+\sqrt{2\left(1-\frac{\beta_{\epsilon}}{\alpha_{\epsilon}}\right)}
$$
where $\alpha_{\epsilon}:=\frac{1}{2}\min(b,c)+\alpha'-\epsilon$ and
$\beta_{\epsilon}:=\frac{1}{2}\max(b,c)+\epsilon$. Let
$\tau^{*}=-\frac{1}{\alpha_{\epsilon}}\log\big(P(\max(q,m)+2)/P(\max(q,m))\big)$
with $P(x):=-[(x-1)^2
+2(\frac{\beta_{\epsilon}}{\alpha_{\epsilon}}-1)]$. Then for all
$\tau >\tau^*$ the continuous semigroup generated by
\eqref{Eq_wavesfunction3} possesses a global attractor $\mathcal{A}$
living in $C([-\tau,0],\mathbb{R}^2)$. This semigroup possesses
furthermore invariant probability measures whose support is
contained in $\mathcal{A}$ and which satisfy the ``weak
ergodic" property \eqref{eq:WeakErgodicAve_NDDE}.
 \end{Prop}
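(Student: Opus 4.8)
The plan is to recognize the system \eqref{Eq_wavesfunction3} as a particular instance of the neutral delay equation \eqref{Eq_NDDE_dissip_bis} covered by Theorem~\ref{THM_IM_NDDE}, and then to do nothing more than check that the two dissipation hypotheses \eqref{Dissip_g} and \eqref{Dissip_cond} hold for $\tau > \tau^*$. Concretely, I would set $B = \begin{bmatrix} 0 & q \\ m & 0 \end{bmatrix}$, so that $\|B\| = \max(m,q) =: k$, and take $g(\mathbf{u},\mathbf{v}) := (p - b u_1 + F_1(\mathbf{u},\mathbf{v}),\, -c u_2 + F_2(\mathbf{u},\mathbf{v}))^{T}$; since $F_1, F_2 \in C^1$ this $g$ lies in $C^1(\mathbb{R}^2 \times \mathbb{R}^2;\mathbb{R}^2)$, so that \eqref{Eq_wavesfunction3} is exactly of the form \eqref{Eq_NDDE_dissip_bis} and the local well-posedness supplied by Proposition~\ref{Prop_existenceNDDE} is available.

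Second, I would establish the dissipation-structure condition \eqref{Dissip_g}. Writing out $N(\mathbf{u},\mathbf{v}) = \langle \mathbf{u} - B\mathbf{v},\, g(\mathbf{u},\mathbf{v})\rangle$ and inserting the sign and growth bounds \eqref{Eq_NDDE_NonLin} and \eqref{Eq_NDDE_NonLin2} on the nonlinearities produces an estimate in which the only terms not already of the desired sign are the linear terms in $u_1, u_2, v_1, v_2$ and the bilinear cross terms $cm\, v_1 u_2$ and $bq\, u_1 v_2$. Applying the plain Young inequality to the two cross terms and an $\epsilon$-Young inequality to the linear terms, all of the $\mathbf{u}$-quadratic mass can be absorbed into $-\big(\tfrac12\min(b,c) + \alpha' - \epsilon\big)|\mathbf{u}|^2$ and all of the $\mathbf{v}$-quadratic mass into $\big(\tfrac12\max(b,c) + \epsilon\big)|\mathbf{v}|^2$, leaving a finite constant $\gamma_\epsilon$. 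This is precisely \eqref{Dissip_g} with $\alpha = \alpha_\epsilon := \tfrac12\min(b,c) + \alpha' - \epsilon$, $\beta = \beta_\epsilon := \tfrac12\max(b,c) + \epsilon$, $\gamma = \gamma_\epsilon$; and the standing hypothesis $\max(b,c) < \tfrac12\min(b,c) + \alpha'$ is exactly what allows $\epsilon > 0$ to be chosen small enough that $2\beta_\epsilon < \alpha_\epsilon$.

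Third, with $2\beta_\epsilon < \alpha_\epsilon$ secured and with the hypothesis $k = \|B\| < -1 + \sqrt{2(1 - \beta_\epsilon/\alpha_\epsilon)}$, I would invoke Lemma~\ref{Lem_analysisC} (applied with $\alpha \mapsto \alpha_\epsilon$, $\beta \mapsto \beta_\epsilon$) to conclude that \eqref{Dissip_cond} holds for every $\tau > \tau^*$, the threshold being the one read off from that lemma, namely $\tau^* = -\tfrac1{\alpha_\epsilon}\log\big(P(k+2)/P(k)\big)$ with $P(x) = -[(x-1)^2 + 2(\beta_\epsilon/\alpha_\epsilon - 1)]$ (one checks that $P(k+2) = 2(1-\beta_\epsilon/\alpha_\epsilon) - (k+1)^2$ and $P(k) = 2(1-\beta_\epsilon/\alpha_\epsilon) - (k-1)^2$, so this matches the expression in Lemma~\ref{Lem_analysisC} verbatim). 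Once \eqref{Dissip_g} and \eqref{Dissip_cond} are verified for such $\tau$, Theorem~\ref{THM_IM_NDDE} applies directly: it yields the global attractor $\mathcal{A} \subset C([-\tau,0],\mathbb{R}^2)$ and, via Theorems~\ref{thm:Prop1} and \ref{thm:Prop2}, the family of invariant probability measures supported in $\mathcal{A}$ and satisfying the weak-ergodic identity \eqref{eq:WeakErgodicAve_NDDE}.

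The step requiring the most care is the second one: the Young-inequality bookkeeping that repackages $N(\mathbf{u},\mathbf{v})$ into the canonical form \eqref{Dissip_g} with exactly the advertised constants $\alpha_\epsilon, \beta_\epsilon$, together with the verification that the implication chain — $\max(b,c) < \tfrac12\min(b,c) + \alpha'$, then existence of a small $\epsilon$ with $2\beta_\epsilon < \alpha_\epsilon$, then $k < -1 + \sqrt{2(1-\beta_\epsilon/\alpha_\epsilon)}$, then \eqref{Dissip_cond} for $\tau > \tau^*$ — is genuinely airtight. Everything downstream of the two verified hypotheses is an immediate citation of Theorem~\ref{THM_IM_NDDE} and is therefore essentially automatic.
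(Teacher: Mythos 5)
Your proposal is correct and follows essentially the same route as the paper: verify \eqref{Dissip_g} by expanding $N(\mathbf{u},\mathbf{v})=\langle \mathbf{u}-B\mathbf{v},g(\mathbf{u},\mathbf{v})\rangle$ and absorbing the linear and cross terms via Young and $\epsilon$-Young inequalities to land on exactly the constants $\alpha_\epsilon,\beta_\epsilon$, then apply Lemma~\ref{Lem_analysisC} to obtain \eqref{Dissip_cond} for $\tau>\tau^*$, and conclude by Theorem~\ref{THM_IM_NDDE}. Your explicit check that $P(k+2)/P(k)$ reproduces the ratio in Lemma~\ref{Lem_analysisC} is a nice touch that the paper leaves implicit.
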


\section*{Appendix: invariant measures and the global attractor}

For the sake of completeness we recall briefly a proof of the classical fact
that invariant probability measures must always have their support contained
in the global attractor;  see e.g. \cite[Lemma 4.2]{BCFM95}
or \cite{FoiasManleyRosaTemam1} given in the concrete case of the 2D Navier-Stokes
equations.
\begin{Lem}\label{thm:InvarOnAttr}
  Let $\{S(t)\}_{t \geq 0}$ be a continuous semigroup defined
  on a metric space $(X,d)$.  Suppose that $\{S(t)\}_{t \geq 0}$
  possesses a global attractor $\mathcal{A}$.  Then any invariant
  borel probability measure $\brlMsr $ (relative to $\{S(t)\}_{t \geq 0}$) has
  its support contained in $\mathcal{A}$ so that, in particular,
  $\brlMsr (\mathcal{A}) =1$.
\end{Lem}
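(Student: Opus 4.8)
The plan is to show that $\brlMsr(\mathcal{A}_\epsilon) = 1$ for every $\epsilon > 0$, where $\mathcal{A}_\epsilon := \{x \in X : \inf_{y \in \mathcal{A}} d(x,y) < \epsilon\}$ is the open $\epsilon$-neighborhood of the attractor; intersecting over a sequence $\epsilon_n \downarrow 0$ and using that $\mathcal{A} = \bigcap_n \mathcal{A}_{\epsilon_n}$ (since $\mathcal{A}$ is closed) then gives $\brlMsr(\mathcal{A}) = 1$, whence $\mbox{supp}(\brlMsr) \subseteq \mathcal{A}$ because $\mathcal{A}$ is a closed set of full measure. The key device is invariance in the form \eqref{eq:Invar}: for every $t \geq 0$, $\brlMsr(\mathcal{A}_\epsilon) = \brlMsr(S(t)^{-1}\mathcal{A}_\epsilon)$.

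First I would fix $\epsilon > 0$ and, using tightness of the Borel probability measure $\brlMsr$ on the metric space $X$ (see e.g.\ \cite{Billingsley1}), choose a compact set $B = B_\delta \subseteq X$ with $\brlMsr(B) \geq 1 - \delta$ for arbitrary $\delta > 0$. Since $B$ is bounded and $\mathcal{A}$ attracts bounded sets, there exists $t^* = t^*(B,\epsilon)$ such that $d_H(S(t)B, \mathcal{A}) < \epsilon$ for all $t \geq t^*$; that is, $S(t)B \subseteq \mathcal{A}_\epsilon$, equivalently $B \subseteq S(t)^{-1}\mathcal{A}_\epsilon$, for every $t \geq t^*$. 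Fixing any such $t \geq t^*$ and invoking invariance, I obtain
\[
   \brlMsr(\mathcal{A}_\epsilon) = \brlMsr(S(t)^{-1}\mathcal{A}_\epsilon) \geq \brlMsr(B) \geq 1 - \delta.
\]
Since $\delta > 0$ was arbitrary, $\brlMsr(\mathcal{A}_\epsilon) = 1$.

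Then I would let $\epsilon \to 0$ along a sequence $\epsilon_n \downarrow 0$: the sets $\mathcal{A}_{\epsilon_n}$ are nested decreasing with $\bigcap_n \mathcal{A}_{\epsilon_n} = \overline{\mathcal{A}} = \mathcal{A}$, so by continuity of measures from above (a finite measure), $\brlMsr(\mathcal{A}) = \lim_{n} \brlMsr(\mathcal{A}_{\epsilon_n}) = 1$. Finally, since $\mathcal{A}$ is closed and $\brlMsr(\mathcal{A}) = 1$, the smallest closed set of full measure is contained in $\mathcal{A}$, i.e.\ $\mbox{supp}(\brlMsr) \subseteq \mathcal{A}$. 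I expect the only subtle point to be the appeal to tightness of $\brlMsr$ to produce the compact (hence bounded) set $B$ on which the attraction property of $\mathcal{A}$ can be applied — in a general complete separable metric space one cannot simply take $B = X$ since $X$ need not be bounded; everything else is a routine application of the definition of the global attractor and of invariance.
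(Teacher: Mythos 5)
Your argument is correct and follows essentially the same route as the paper: show $\brlMsr(\mathcal{A}_\epsilon)=1$ for each $\epsilon>0$ by combining the attraction of bounded sets with invariance in the form $\brlMsr(\mathcal{A}_\epsilon)=\brlMsr(S(t)^{-1}\mathcal{A}_\epsilon)$, then intersect over $\epsilon\downarrow 0$. The one place you diverge is in producing the bounded set of nearly full measure: you invoke tightness (Ulam's theorem) to get a compact $B$ with $\brlMsr(B)\geq 1-\delta$, which requires $X$ to be Polish, whereas the paper simply exhausts $X$ by the nested balls $B_R(x)$ and uses continuity of the measure from below, so $\brlMsr(B_R(x))\to 1$; this avoids any appeal to tightness and is why the lemma can be stated for an arbitrary metric space rather than only for complete separable ones. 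In the context of the paper's standing assumptions your version is perfectly adequate, but the paper's device is both more elementary and slightly more general.
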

\begin{proof}
    For $\delta > 0$ form the sets $\mathcal{A}_{\delta} = \{y : \inf_{x \in \mathcal{A}}d(x,y) < \delta \}$.
    In view of the basic continuity properties of measures, it is sufficient to show that,
        $\brlMsr (\mathcal{A}_{\delta}) =1, \textrm{ for every } \delta > 0$, since, evidently, $\mathcal{A} = \cap_{\delta > 0} \mathcal{A}_{\delta}$.

    To this end fix $\delta > 0$.  Since $\mathcal{A}$ is attracting, for every $R > 0$ and each $x \in X$ we may select $t_{R} > 0$ such that
    $
       S(t_{R})B_{R}(x) \subset \mathcal{A}_{\delta}
    $, where $B_{R}(x)$ is the ball of radius $R$ around the point $x$.
    This implies,
    $
        B_{R}(x) \subset S(t_{R})^{-1}S(t_{R})B_{R}(x) \subset S(t_{R})^{-1}\mathcal{A}_{\delta}
    $.
    Thus, due to the invariance of $\brlMsr $, we have, for every $R > 0$ that
    $
      \brlMsr (B_{R}(x)) \leq \brlMsr ( S(t_{R})^{-1}\mathcal{A}_{\delta}) = \brlMsr (\mathcal{A}_{\delta}).
    $
    Since for every $x$ the collection $\{B_{R}(x)\}_{R > 0}$ is a nested collection of sets with $\cup_{R > 0} B_{R}(x) = X$ we now
    infer $\brlMsr (\mathcal{A}_{\delta}) =1$ by again invoking basic continuity properties of measures.  The proof is therefore complete.
    \end{proof}

\section*{Acknowledgments}
This article, in its several incarnations, has benefitted from the extensive feedback of
Roger Temam.  We would also like to thank 
Silviu Niculescu, Vittorino Pata, and Francesco Di Plinio for numerous
insightful discussions concerning, respectively, neutral delay differential equations (SN)
and the memory framework of Dafermos (VP \& FDP).  Finally, we would like to acknowledge the 
anonymous referee for his or her many helpful comments.
This work was partially supported by the National Science Foundation
under the grants NSF-DMS-1004638, NSF-DMS-0906440 (NGH), NSF-DMS-1049253 (MDC), by
the US Department of Energy grant DE-FG02-07ER64439 (MDC) and by the
Research Fund of Indiana University (NGH).

\footnotesize
\bibliographystyle{amsalpha}
\bibliography{ref_negh,ref_mk}

\normalsize

\newpage

\noindent Micka\"el D. Chekroun \\
{\footnotesize
Department of Atmospheric Sciences and Institute of Geophysics and Planetary Physics, \\
University of California, Los Angeles, CA 90095-1565, USA \\
and \\
Environmental Research and Teaching Institute (CERES-ERTI), \\
\'Ecole Normale Sup\'erieure, 75231 Paris Cedex 05, France \\
Web: \url{http://www.environnement.ens.fr/annuaire/chekroun-mickael/}\\
Email: \url{mchekroun@atmos.ucla.edu}}\\[.3cm]
\noindent Nathan E. Glatt-Holtz\\ {\footnotesize
Department of Mathematics
and The Institute for Scientific Computing and Applied Mathematics\\
Indiana University\\
Web: \url{http://mypage.iu.edu/\~negh/}\\
 Email: \url{negh@indiana.edu}} \\[.3cm]

\end{document}